\newtheorem{theorem}{Theorem}[section]
\theoremstyle{plain}
\newtheorem{corollary}[theorem]{Corollary}
\newtheorem{example}[theorem]{Example}
\newtheorem{lemma}[theorem]{Lemma}
\newtheorem{proposition}[theorem]{Proposition}
\newtheorem{remark}{Remark}
\numberwithin{equation}{section}
\newcommand{\R}{\mathbb{R}}
\newcommand{\N}{\mathbb{N}}
\newcommand{\E}{\mathbb{E}}
\newcommand{\Var}{\mathrm{Var}}
\newcommand{\Cov}{\mathrm{Cov}}
\newcommand{\id}{\text{id}}
\newcommand{\LS}{\mathcal{L}}
\newcommand{\Li }{\widetilde{\mathcal{L}}}
\newcommand{\Ti}{\widetilde{\mathcal{T}}}
\newcommand{\T}{\mathcal{T}}
\newcommand{\PIpw}{\mathrm{PI}(p,w)}
\begin{document}

\title[Poincar\'e inequalities by integration]{A note on one-dimensional Poincar\'e inequalities by 
Stein-type integration}
\author{Gilles Germain}
\address[]
{Gilles Germain, Universit\'e libre de Bruxelles, D\'epartement de Math\'ematique, Campus Plaine, Boulevard du Triomphe
CP210, B-1050 Brussels}
\email[]{gilles.germain@ulb.be}%
\author{Yvik Swan}
\address[]{Yvik Swan, Universit\'e libre de Bruxelles, D\'epartement de Math\'ematique, Campus Plaine, Boulevard du Triomphe
CP210, B-1050 Brussels}
\email[]{ yvik.swan@ulb.be}

\begin{abstract}
We study the weighted Poincar\'e constant $C(p,w)$ of a probability density $p$ with weight function $w$ using integration methods inspired by Stein's method. We obtain a new version of the Chen-Wang variational formula which, as a byproduct, yields simple  upper and lower bounds on $C(p,w)$   in terms of the  so-called Stein kernel of $p$.  We also iterate these variational formulas so as to build  sequences of nested intervals containing the Poincar\'e constant, sequences of functions  converging to said constant, as well as sequences of  functions converging to the solutions of the corresponding spectral problem. Our results rely on the properties of a pseudo inverse operator of the classical Sturm-Liouville operator. We illustrate our methods  on a variety of examples: Gaussian functionals,  weighted Gaussian,  beta,  gamma, Subbotin, and Weibull distributions.
\end{abstract}
\maketitle

\section{Introduction and overview of  main results}

Given a measure with density $p$ on the real line, and a weight function
$w >0$, we say that $p$ satisfies a weighted Poincar\'e inequality with
weight $w$ if there exists $C>0$ such that
\begin{equation}
\label{PI}
\Var_p[h]\leq C\,\E_p[|h'|^2w]
\end{equation}
for all $h$ in the Sobolev space $H^1(p,w)$ which we will define in Section \ref{sec:kern-covar-ineq}.  We abbreviate this by
$\mathrm{PI}(p,w)$. The smallest constant for which \eqref{PI} holds
is called the Poincar\'e constant of $p$ with weight $w$, and denoted by
$C(p,w)$. When $C(p,w)< \infty$, we say that a test function $h$ saturates
$\mathrm{PI}(p,w)$ if \eqref{PI} is an equality. Estimation of $C(p,w)$ and of the corresponding saturating functions is  of theoretical importance because of its connections with concentration of measure (see \cite{Ledoux concentration, Gozlan}), isoperimetric inequalities (see \cite{BobkovLedoux, bobhoud}) and quantitative central limit theorems (see \cite{Courtade}). This problem is also of practical importance, with uses  ranging from the study of convergence properties of MCMC algorithms (see \cite{Markov, LyaPoinca, Wang}) to sensitivity analysis \cite{Roustant}. 

Exact solutions are known for some classical measures; for example  $C(p, 1)=1$ for $p$ the standard Gaussian, $C(p, 1)=4$ for $p$ the (double) exponential and $C(p, 1)=(b-a)^2/\pi^2$ for $p$ the uniform on $[a,b]$.  
 A  general bound on $C(p, 1)$ is provided by  the so-called Muckenhoupt criterion (see e.g.\  \cite[Section 4.5.1]{Markov}): letting $P$ be the cumulative distribution function of $p$,
and $\bar{P}=1-P$ the corresponding survival function, it holds that  
\begin{equation} \label{ex:muck}
 B/2       \le C(p, 1) \le 4B,
\end{equation}
where, letting $m$ be a median of $p$,  $B = \max(B_+ ,B_-)$ with $B_+ = \sup_{x > m} \bar{P}(x) \int_m^x 1/p(t) \mathrm{d} t$ and $B_- = \sup_{x < m} {P}(x) \int_x^m 1/p(t) \mathrm{d} t$. In particular, $C(p, 1) < \infty$ if and only if $B < \infty$. Bound \eqref{ex:muck} is, however, not sharp when applied to specific choices of $p$; for instance  it yields  $ 0.239406 \le C(p, 1) \le 1.91525$ in the standard Gaussian case and   $0.5 \le C(p, 1) \le 4$ in the exponential case.
Another elegant universal bound (obtained by a transport argument) is 
\begin{equation} \label{eq:roustbobokov}
    C(p,1) \le 4 \left(\sup_{x \in ]a, b[} \frac{P(x) \bar{P}(x)}{p(x)} \right)^2 \le 4 \left(\sup_{x \in ]a, b[} \frac{\min(P(x), \bar{P}(x))}{p(x)} \right)^2
\end{equation}
where the first inequality is due to  \cite{Roustant}  and the second to \cite{bobhoud}. Again these bounds are generally not  sharp, for instance in the Gaussian case they yield $ C(p, 1) \le \pi/2\approx 1.57$ and in the exponential case $ C(p, 1) \le 4$.

Sharp bounds on $C(p, w)$
can be obtained by couching the problem within the context Sturm-Liouville theory. More precisely, consider  the {Sturm-Liouville operator} for $p$ with weight
$w$ defined for smooth functions $f$ as
\begin{equation}
  \label{eq:SLop}
  \LS f=f''w+f'(-V'w+w')
\end{equation}
where $V = - \ln p$. One important property of this operator is that the inverse of its
smallest non-zero eigenvalue is $C(p,w)$ (see \cite{radial}). In their paper \cite{Chen}, Chen and Wang showed that $C(p,w)$
 satisfies the following variational upper bound   (see \cite[Theorem 2.1]{Chen}):
\begin{equation}
\label{intro2}
\frac{1}{C(p,w)}\geq\inf-\frac{(\LS f)'}{f'}
\end{equation}
for all smooth increasing functions $f$. Note that the authors of \cite{JoulinB} obtained the same formula through a totally different approach linked to intertwining relations (see \cite[Theorem 4.1]{JoulinB}). 
In \cite{Chenalone}, Chen  introduced an iteration of \eqref{intro2} which allows to approximate both from above and from below the weighted Poincar\'e constant of a density on the positive real line to arbitrary precision; this in particular leads to a strict improvement on \eqref{ex:muck}. Chen's method of proof  relies on an operator which we will define in \eqref{eq:chenoperator}. Such iterative schemes also apply in a discrete context, and are of importance for  birth-and-death processes, see e.g.\ \cite{vandoorn} and the many references therein. 

Bounds such as \eqref{ex:muck}, \eqref{eq:roustbobokov}, \eqref{intro2}, and their variations (or iterations) are useful for
obtaining numerical estimates (and sometimes even exact values) for
explicit densities on the real line. Aside from the above references, we also refer to  \cite{JoulinB,Joulin} as well as \cite{Roustant} who  give a nice overview of some literature on the topic and also propose an efficient  algorithm (and \texttt{R}-package) that approximates $C(p,1)$ for any one-dimensional density $p$ with compact support. 

Chen and Wang extend their formula \eqref{intro2} to higher dimensions using a coupling approach (see \cite[Theorem 4.6]{Chen}). Other upper bounds are available in particular cases like uniform and radial measures, convolution of measures or when a Lyapunov function exists (see \cite{uniform, radial, convolution} and \cite{Lyapunov}),  but much remains to be done in this context. However, it is not the object of our paper and we will focus on dimension 1. The extension of our method to higher dimensions seems challenging. 

As mentioned above,  Poincar\'e inequalities have been mostly studied through their relationship with $\LS$. For specific examples of distributions, it may be possible to use classical Sturm-Liouville methods directly to obtain the corresponding solutions.  In this paper, we use another approach, originating in the theory surrounding Stein's method which, as it turns out, nicely generalises the approach from \cite{Chen, Chenalone}. 

Before proceeding to the statement of our results, we first fix some notations.   Let $a\in \R\cup\{-\infty\}$,
$b\in \R\cup\{\infty\}$, and $p\in L^1(]a,b[)$ be such that $p>0$
almost everywhere (a.e.) and the Lebesgue integral of $p$ on the
interval $]a,b[$ is $\int_a^b p=1$.  The function $p$ is thus the
density of a probability measure on the interval $]a,b[$, absolutely
continuous with respect to the Lebesgue measure. A weight is any function $w\in  L^1_{\text{loc}}(]a,b[)$ such that $w>0$ a.e.\ and $pw\in L^1_{\text{loc}}(]a,b[)$.  Throughout, we let
$X$ be distributed according to density $p$ and we write the integral of a
function $f$ with respect to the corresponding measure as
$\E[f(X)] = \E_p[f]=\int_a^b fp = \int_a^bf(x) p(x) \mathrm{d}x$ where the
choice of one notation over the other will be dictated by convenience reasons. 
There is no loss of generality in assuming that $p>0$ a.e. on $]a,b[$. Indeed, if $p=0$ a.e. on $]a,a_1]\cup[b_1,b[$ for some $a<a_1<b_1<b$, we can restrict the domain of $p$ to $]a_1,b_1[$ without changing the value of $C(p,w)$ and if $p=0$ a.e. on an interior interval of $]a,b[$, we must have $C(p,w)=\infty$.

We shall need four operators. The first is the Sturm-Liouville operator defined in \eqref{eq:SLop}. The second is  the canonical density Stein operator $\mathcal{T}$,  defined for a density $p$ on the real line by $$\T h= \frac{(hp)'}{p}$$ for any function $h$ such that $hp$ is weakly differentiable on $]a, b[$. 
This operator is  related to $\LS$ by the identity $\LS h=\T(h'w)$. The third operator is the pseudo-inverse Stein operator $\Ti$ (see \cite{Swan})  defined, for $f \in L^1 (p)$,  by  
\begin{equation*}
\Ti f(x)=\frac{1}{p(x)}\int_a^x (f-\E_p[f])
\end{equation*}
(the denomination pseudo-inverse  will be  explained in Section \ref{sec:kern-covar-ineq}).  Our fourth and final operator is the pseudo-inverse Sturm-Liouville operator  which we define as 
\begin{equation}
\label{Li operator}
\Li f=-\frac{1}{w}\Ti\left(\int_c^{\cdot}f\right),
\end{equation}
for some arbitrary $c\in ]a,b[$ and $f\in L^2(pw)$ (the terminology will be explained in Section \ref{sec:pseudo-inverse-sturm}); this operator can be seen to be equivalent to Chen's operator $II$ from \cite{Chenalone}.

As already stated above, Sturm-Liouville theory, through the spectral properties of $\LS$ and formulas such as \eqref{intro2}, provides a blueprint for (approximately) solving $\mathrm{PI}(p, w)$ with given  $p, w$. Similarly, it is known that properties of $\Ti$ allow to obtain explicit Poincar\'e constants for well chosen weights. Indeed, letting $\mathrm{id}$ be the identity function and setting $\tau = \Ti \mathrm{id}$, we have
\begin{equation}\label{eq:varsaum}
    \Var_p[h]\leq \E_p[|h'|^2 \tau]
\end{equation}
with equality if $h$ is an affine transformation of $\tau$ (see e.g.\  \cite{Saumard, Swan}). The function $\tau$ is called the  Stein kernel of $p$, and  it thus follows in particular that  $C(p, \tau) = 1 $ for any density $p$ with finite variance admitting a Stein kernel.  As already noted in \cite{Swan},  it is not hard to tweak the theory of Stein operators to make appear a connection between ``Stein-type variance bounds'' such as \eqref{eq:varsaum} and ``Chen-Wang-type variance bounds'' such as \eqref{intro2}.  The purpose of our paper is to study this connection in more detail. 

\subsection{Overview of the main results}
\label{sec:overv-main-results}

    In order to make the  paper easier to read and to use, we now  present a streamlined  overview of our main results. We refer to later sections  for more complete statements along with the corresponding proofs, as well as intermediary  side-results that may also be of independent  interest.

 We begin by introducing two kernels whose properties  are at the heart  of  our approach.

\medskip 

 \noindent \textbf{Definition 1.} {\it 
   Let $P = \int_a^{\cdot}p$ be  the cumulative distribution function of $p$,
and $\bar{P}=1-P$ the corresponding survival function. 
  For almost all $x \in ]a, b[$ we define  $$K(x,y)=P\big(x\wedge y\big)\bar{P}\big(x\lor y \big) \mbox{ and } k(x, y) = \frac{K(x, y)}{p(x) p(y)}$$ with  $x \wedge y = \min(x, y)$ and $x \lor y = \max(x, y)$. For any weight $w$, we set 
$k_w(x,y)={k(x,y)}/{(w(x)w(y))}$.   
 }

\medskip

With this notation in hand, our first main result is a  variational formula for Poincar\'e constants (see Theorem \ref{lem1}). 
 \medskip
 
 \noindent \textbf{Theorem 1 (Chen-Wang formula).} {\it For all weakly
 differentiable functions $h_1\in L^1(p)$ and $h_2\in L^2(p)$ which are a.e.\ strictly monotone throughout $]a, b[$ it holds that 
\begin{equation}\label{eq:stequivav1}
\inf_{]a,b[}  \left\{ \frac{\mathbb{E} \left[  k(x,X) h_1'(X) \right]}{w(x) h_1'(x)} \right\}
\leq  C(p,w)\leq \sup_{]a,b[}  \left\{ \frac{\mathbb{E} \left[  k(x,X) h_2'(X)
  \right]}{w(x) h_2'(x)} \right\}
\end{equation}
(the notations $\inf$ and $\sup$ must be understood as the essential infimum and supremum).}

\medskip

Upon closer inspection, the upper bound in \eqref{eq:stequivav1} is seen to be a  equivalent to Chen-Wang's bound  \eqref{intro2}, using \eqref{caracL} (see \cite[Lemma 2.21]{Swan}). The main difference is that in \eqref{eq:stequivav1} we restrict to functions with mean 0, hereby achieving a minor improvement as illustrated in Proposition \ref{our_bound} below (see specifically \eqref{eq:chenwand}). 
As already noted in \cite{Chen}, the freedom of choice in the functions $h_1, h_2$ allows to easily produce non trivial bounds  on $C(p, 1)$. A quite natural choice, related to Muckenhoupt criterion \eqref{ex:muck} and to \cite{Chen, Chenalone}, is $h(x) = \psi(h_0(x))$ where $h_0'(x) =1/(pw)(x)$ and $\psi$ chosen so as to ensure integrability (for instance $\psi(x) = \sqrt x$ suffices, see \cite{Chenalone}). Another interesting choice is 
$h_2'(x) = - p(x)/(P(x) \bar{P}(x))$ (which  always satisfies  $\mathrm{Var}_p[h_2]=\pi^2/3$ irrespective of $p$; we will prove  this curiosity in Appendix \ref{sn:furthproo}). This yields
$$\frac{\mathbb{E} \left[  k(x, X) h_2'(X)
  \right]}{h_2'(x)} = \frac{P(x) \bar{P}(x)}{p(x)^2} \psi(P(x))$$
  with  $\psi(y)=-y\log(y)-(1-y) \log(1-y).$ It follows that 
  \begin{equation}\label{eq:ourroust}
      C(p, w) \le \sup_{x \in ]a, b[}\frac{P(x) \bar{P}(x)}{w(x) p(x)^2} \psi(P(x)). 
  \end{equation}
Although $\psi(u) \le 4 u(1-u)$ over nearly the whole interval $[0,1]$, it can be seen that \eqref{eq:ourroust} does not imply (and is not implied by)  \eqref{eq:roustbobokov}. For instance,  in the case of the Gaussian,  we have already mentioned that the latter yields $C(p,1) \le \pi/2 \approx 1.57$ while \eqref{eq:ourroust} gives $C(p,1) \le \pi\log(\sqrt{2})  \approx 1.09$, whereas for the exponential distribution \eqref{eq:roustbobokov}  yields $C(p, 1) \le 4$ and  \eqref{eq:ourroust}  only produces a trivial bound.  We refer to Section \ref{sec:examples} for more illustrations.

 Another  natural choice in \eqref{eq:stequivav1} is $h_1'=h_2'=-1$. If $p$ has finite second moment,  noting that $\mathbb{E} \left[  k(x,X) 
  \right] = \tau(x)$ is the aforementioned Stein kernel of $p$, it then follows  that 
 \begin{equation}
     \label{eq:stkb} \inf_{]a, b[} \frac{\tau(x)}{w(x)} \le C(p, w) \le  \sup_{]a, b[} \frac{\tau(x)}{w(x)}
 \end{equation}
 (only the upper bound holds if $p$ only has finite first moment). In particular we  immediately read that $C(p, \tau)=1$ when $p$ has finite second moment, hereby confirming \eqref{eq:varsaum}. 
 This already gives nontrivial bounds for densities with bounded Stein kernel; for instance if $p$ is the Gaussian density then $\tau(x) =1$ which leads us back to the known value of the Poincar\'e constant $C(p, 1)$ in this case. Bound \eqref{eq:stkb} is not useful for densities with unbounded Stein kernel, as  e.g.\ for the exponential distribution (in which case $\tau(x) = x$). Again, we refer to Section \ref{sec:examples} for more illustrations.  

Starting from \eqref{eq:stequivav1}, it is intuitively appealing  to iterate the reasoning that leads to Theorem 1 by replacing $h'$ with $\E[k(\cdot,X) h'(X)]/w$; this, as we shall see in Theorem 2 below, indeed leads to sharper bounds on $C(p, w)$. Moreover, since 
\begin{equation} \label{eq:chenoperator}
\Li h'(x)=\frac{1}{p(x)w(x)}\int_a^b K(x,y)h'(y)dy=\frac{1}{w(x)}\E[k(x,X)h'(X)]
\end{equation}
for all weakly differentiable function  $h\in L^1(p)$ (see Lemma 2.21 in \cite{Swan}), we witness how pseudo-inverse Sturm-Liouville  operator introduced in \eqref{Li operator} now comes into play. 
As we shall show in Section~\ref{sec:pseudo-inverse-sturm}, $\Li$ is a continuous, self-adjoint, and positive operator whose  norm is $C(p,w)$.  Let $\Li^{0}f = f$ and define by recurrence $\Li^{n+1} = \Li( \Li^n)$. Replacing iteratively $h'$ by $\Li h'$ in \eqref{eq:stequivav1},  leads to a nested sequence of intervals containing $C(p,w)$, as follows (see Theorem \ref{nested}).

\medskip

\noindent \textbf{Theorem 2 (A sequence of nested intervals).}  Assume that $C(p,w)<\infty$ and $L^2(pw)\subset L^1_{\text{loc}}(]a,b[)$. For all $g_0\in L^2(pw)$ such that $g_0>0$ a.e. the sequence of intervals
\begin{equation*}
I_n=\left[\inf_{]a,b[}\frac{\Li ^{n+1} g_0}{\Li ^n g_0},\sup_{]a,b[}\frac{\Li ^{n+1} g_0}{\Li ^n g_0}\right]
\end{equation*}
satisfies $I_{n+1} \subset I_n$ for all $n\in \N$ and $C(p,w)\in \bigcap_{n\in\N}I_n$.

\medskip

When $a=0$, one readily verifies that \cite[Theorem 1.4]{Chenalone} follows from Theorem 2, by considering $g_0(x) = (pw)(x)^{-1} (\int_0^x 1/(pw))^{-1/2}$.  We note that there
is a priori no guarantee that $\bigcap_{i\in\N}I_n=\left\{C(p,w)\right\}$. For
example, in the case of the exponential measure on $\R^+$ with
$g_0=\id$, we have $I_n=[0,\infty[$ for all $n\in\N$.
In order to obtain convergence results, we have to assume that $\Li$ is compact, which  is true when $k_w\in L^2(pw\otimes pw)$ (see Proposition \ref{bound_k}).  Under this condition, the first eigenvector $e_1$ of $\Li$ is the derivative of the function that saturates $\PIpw$ (see Proposition \ref{prop:eigen}) and we can recover $e_1$ by applying recursively $\Li/C(p,w)$ to any starting function $g_0\in L^2(pw)$. More precisely, the following holds (see Theorem \ref{convergence1}).

\medskip

\noindent \textbf{Theorem 3 (A sequence converging to $e_1$).}
Assume that $C(p,w)<\infty$, $L^2(pw)\subset L^1_{\text{loc}}(]a,b[)$ and $\Li$ is compact. Let $e_1$ be the first eigenfunction of $\Li$, $g_0     \in L^2(pw)$ and $a_1=\E_p[g_0e_1w]$. Then
\begin{equation*}
\frac{\Li^n g_0}{C(p,w)^n}\to a_1e_1,
\end{equation*}
where  the  convergence holds in $L^2(pw)$.

\medskip
In some particular cases,  Theorem 3 provides   $C(p,w)$ as well as  the corresponding saturating function (see Examples \ref{beta exact} and \ref{gamma}). In general, however, one cannot guess $e_1$. The  result remains  useful because it provides  sequences converging to $C(p,w)$, as follows  (see Theorem \ref{convergence2}).

\medskip

\noindent \textbf{Theorem 4 (A sequence converging to $C(p,w)$).}  Assume that $C(p,w)<\infty$, $L^2(pw)\subset L^1_{\text{loc}}(]a,b[)$, and $\Li$ is compact. For all $g_0\in L^2(pw)$, it holds that 
$$\lim_{n\rightarrow \infty}\frac{\Li ^{n+1}g_0(x)}{\Li ^ng_0(x)}=C(p,w)$$
for all  $x \in ]a, b[$
such that $g_0>0$ and $k_{w}(x, \cdot) \in L^2(pw)$.  

\medskip

We conclude this overview of our main results by noting how
\begin{equation*}
\Li^n g_0(x)=\int_a^b\ldots\int_a^b  \frac{K(x,x_1)}{p(x)w(x)}\frac{K(x_1,x_2)}{p(x_1)w(x_1)}\ldots \frac{K(x_{n-1},x_n)}{p(x_{n-1})w(x_{n-1})}g_0(x_n)dx_n\ldots dx_1.
\end{equation*}
If moreover  $\int_a^b pw<\infty$, we can normalize $w$ in order that $pw$ is
the density of a probability measure. Hence, the previous equality can
be reformulated as 
\begin{equation*}
\Li^n g_0(x)=\E\left[k_{w}(x,X_1)k_{w}(X_1,X_2)\ldots k_{w}(X_{n-1},X_n)g_0(X_n)\right]
\end{equation*}
where $X_1,\ldots X_n$ are independent and identically distributed (iid) with density $pw$; this last formula leads, for any reasonable choice of starting function $g_0$,  to easily implemented and  numerically stable approximations of $C(p, w)$.

\subsection{Structure of the paper}
 The rest of the paper is as follows. In Section \ref{sec:preliminaries}, we investigate the properties of the pseudo-inverse Stein  and  pseudo-inverse  Sturm-Liouville operators; most  proofs are given in the Appendix. In Section \ref{sec:main results}, we give detailed statements and  proofs of the theorems presented in the Introduction, along with those of  some secondary results. Most proofs are provided in the text.   Finally,  in Section \ref{sec:examples}, we provide some examples of exact and approximate Poincar\'e constants obtained with our methods; all proofs are provided in the Appendix.  
The supplementary material contains all relevant \texttt{Mathematica}  codes, hereby enabling the interested reader to reproduce our computations.

\section{Preliminaries}
\label{sec:preliminaries}

Most of the  results in  this section  are  extensions  (or particularizations) of  material already available from the literature. In order to keep the paper self-contained (and also because, in some instances, our assumptions are different from those in the literature), we propose bespoke proofs in  Appendix \ref{sn:furthproo}. 

\subsection{The pseudo-inverse Stein operator} \label{sec:kern-covar-ineq}

Let $V = -\ln p$. Following
\cite{Swan}, we define the {canonical Stein operators} for $p$ as
$$\T
f=\frac{(fp)'}{p} = f'- V f \mbox{ and } \Ti h=\frac{1}{p}\int_a^\cdot
(h-\E_p[h])p$$ for a function $f$ such that $fp$ is weakly differentiable on the
one hand, and $h\in L^1(p)$ on the other hand. 
As shown in \cite[Lemma 2.6]{Swan},
$\widetilde{\mathcal T}$ is a pseudo-inverse of $\mathcal T$ in the
sense that $(\Ti \circ \mathcal T )f=f$ if $\E_p[(fp)']=0$
while $(\mathcal T \circ \Ti)h = h-\E_p[h]$. Moreover, if $h$ itself  is
furthermore also weakly differentiable, 
it holds that
\begin{equation}
\label{caracL}
\Ti h(x)=-\frac{1}{p(x)}\int_{a}^b K(x,y)h'(y)dy=-\E\left[k(x,X)h'(X)\right]
\end{equation}
where $X\sim p$ and the kernels $K$ and $k$ above are defined in Definition 1 (see \cite[Lemma 2.21]{Swan}). 
We say that a weakly differentiable function $h$ is increasing if $h'\geq 0$ a.e. The same convention holds for a monotone function.  
We recall that $L^2(p)$ and $L^2(pw)$ are
separable Hilbert spaces (see \cite[Section 3.2 and Proposition
3.5.5]{Heinonen}) and define the Sobolev space
$$H^1(p,w)=\left\{h\in L^2(p): h\text{ is weakly differentiable and
  }h'\in L^2(pw) \right\}.$$ Note how, if $h\in H^1(p,w)$, then both
$h$ and $h'$ belong to $L^1_{\text{loc}}(]a,b[)$. The following holds. 
\begin{lemma}
\label{dif_increasing}
If $C(p,w)<\infty$, every $f\in H^1(p,w)$ can be written $f=f_1-f_2$ for some increasing functions $f_1,\, f_2\in H^1(p,w)$.
\end{lemma}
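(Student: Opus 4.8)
The plan is to build $f_1,f_2$ in the obvious way from the positive and negative parts of the weak derivative, and then to show — and this is the only place where the hypothesis $C(p,w)<\infty$ is used — that the two resulting monotone functions still belong to $L^2(p)$. Concretely, fix $c\in]a,b[$, work with the locally absolutely continuous representative of $f$, and set $g^{+}=\max(f',0)$, $g^{-}=\max(-f',0)$, $f_1(x)=f(c)+\int_c^x g^{+}$ and $f_2(x)=\int_c^x g^{-}$. Since $f\in H^1(p,w)$ we have $f'\in L^1_{\mathrm{loc}}(]a,b[)$, so $f_1,f_2$ are well defined, non-decreasing, locally absolutely continuous (hence weakly differentiable), satisfy $f_1-f_2=f(c)+\int_c^{\cdot}f'=f$, and have $|f_i'|\le|f'|$ a.e., so that $f_i'\in L^2(pw)$. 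What remains is to prove $f_1,f_2\in L^2(p)$.

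That reduces to the following sub-claim, to be applied with $h=f_1$ and $h=f_2$: \emph{if $C(p,w)<\infty$ and $h$ is non-decreasing and weakly differentiable with $h'\in L^2(pw)$, then $h\in L^2(p)$, hence $h\in H^1(p,w)$.} For $M>0$ let $h_M=\max(-M,\min(h,M))$ be the truncation of $h$ to $[-M,M]$. It is bounded, hence in $L^2(p)$; being obtained from $h$ by post-composition with a $1$-Lipschitz map it is weakly differentiable with $|h_M'|\le|h'|$ a.e., so $h_M\in H^1(p,w)$. Applying $\PIpw$ and then this derivative bound gives
$$\Var_p[h_M]\le C(p,w)\,\E_p[|h_M'|^2w]\le C(p,w)\,\E_p[|h'|^2w]=:K<\infty$$
uniformly in $M$.

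The final step passes to the limit $M\to\infty$. Let $X,Y$ be i.i.d.\ with density $p$. Because $h$ is monotone, for every $x,y$ the quantity $(h_M(x)-h_M(y))^2$ is non-decreasing in $M$ and converges to $(h(x)-h(y))^2$; monotone convergence then yields $\E[(h(X)-h(Y))^2]=\lim_M 2\Var_p[h_M]\le 2K<\infty$. By Tonelli there exists $y_0$ (in fact $p$-a.e.\ $y_0$) with $h(y_0)$ finite and $\int_a^b(h(x)-h(y_0))^2p(x)\,dx<\infty$; since constants lie in $L^2(p)$ this gives $h\in L^2(p)$, as desired, and therefore $f_1,f_2\in H^1(p,w)$.

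The genuinely delicate point is exactly this last control: a priori a merely monotone function with $L^2(pw)$-derivative could blow up at the endpoints $a,b$ faster than $L^2(p)$ tolerates, and it is the finiteness of $C(p,w)$ — funnelled through the truncations $h_M$, which are automatically admissible test functions with uniformly bounded variance — that rules this out. The monotonicity of $h$ is what makes $(h_M(X)-h_M(Y))^2$ increase to $(h(X)-h(Y))^2$, so that no a priori integrability of $h$ is needed before invoking monotone convergence. Everything else — the decomposition, the Lipschitz chain-rule bound, and the Tonelli argument — is routine.
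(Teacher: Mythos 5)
Your proof is correct, but it takes a different route from the paper. The paper's own argument is a two-line reduction: since $C(p,w)<\infty$, Proposition \ref{ID} gives $E^2(p,w)=L^2(pw)\cap L^1_{\mathrm{loc}}(]a,b[)$ and the continuity of $I:E^2(p,w)\to H^1_c(p,w)$, so one simply sets $f_1=I(f'^+)$, $f_2=I(f'^-)$ and reads off $f_1,f_2\in H^1(p,w)$ (up to the additive constant $\E_p[f]$, which is harmless). The real content — that a monotone function with derivative in $L^2(pw)$ lies in $L^2(p)$ when $C(p,w)<\infty$ — is hidden inside the proof of Proposition \ref{ID}, which uses the same truncations $h_M$ as you do but concludes by contradiction via monotone convergence of $\|h_M\|_{L^2(p)}$, an argument that implicitly requires some control of the means $\E_p[h_M]$. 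You instead prove this key integrability fact inline and conclude through the symmetrized second moment $\E[(h(X)-h(Y))^2]=2\Var_p[h_M]$ in the limit, followed by Tonelli to recenter at a good point $y_0$; this neatly sidesteps all centering and a priori integrability issues (and, as a minor remark, the monotone increase of $(h_M(x)-h_M(y))^2$ in $M$ holds for any $h$, not only monotone ones, since truncation at level $M$ contracts differences less and less as $M$ grows). What the paper's route buys is brevity and reuse of an already-established structural result about $I$ and $E^2(p,w)$; what yours buys is a self-contained and arguably more robust verification of the $L^2(p)$ membership, at the cost of redoing part of Proposition \ref{ID} by hand.
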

  One of the most remarkable facts about $\Ti$ is that it appears in the following Hoeffding-type covariance
representation taken from \cite[Corollary 2.4]{SaumardW}.
\begin{theorem}[Hoeffding-type covariance identity]
\label{theo2}
Let $g,h\in L^1(p)$ be weakly differentiable and increasing. Then
\begin{equation}\label{eq:saumaaaa}
  \Cov_p[g,h]=\mathbb{E}_p \left[ g' (- \Ti h)  \right]=\E \left[h'(X)k(X,Y)g'(Y)\right]
\end{equation}
where  
$X,Y\sim p$ are taken independent. These equalities also hold if
$g\in H^1(p,w_1)$ and $h\in H^1(p,w_2)$ for some weights $w_1,w_2$
such that $C(p,w_i)<\infty$ for $i=1,2$.
\end{theorem}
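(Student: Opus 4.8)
The plan is to reduce the claim to the case of increasing $g,h$, prove it there by a symmetrisation argument, and then recover the general case by linearity. One could instead integrate by parts in $\E_p[g(h-\E_p h)]$ using the identity $(\Ti h\,p)'=(h-\E_p h)p$ (which is closer to the paper's theme), but controlling the boundary terms when $g$ or $h$ is unbounded near $a$ or $b$ is delicate, so I prefer the symmetrisation route. Throughout, since the quantities in \eqref{eq:saumaaaa} may a priori be infinite, all identities are to be read in $(-\infty,\infty]$ and Tonelli's theorem is applied only to manifestly nonnegative integrands.

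So assume $g,h$ increasing, hence $g',h'\ge 0$ a.e. Then $-\Ti h(y)=\E[k(y,X)h'(X)]\ge 0$ by \eqref{caracL}, so $g'(y)(-\Ti h(y))$ is nonnegative, and Tonelli together with $k(x,y)=k(y,x)$ gives the second equality $\E_p[g'(-\Ti h)]=\E[h'(X)k(X,Y)g'(Y)]$. Using \eqref{caracL} again to rewrite $(-\Ti h(y))p(y)=\int_a^b K(x,y)h'(x)\,dx$, the first equality becomes the classical Hoeffding covariance identity
\[
\Cov_p[g,h]=\int_a^b\!\!\int_a^b K(x,y)\,h'(x)\,g'(y)\,dx\,dy .
\]

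To prove this, one first checks, by splitting $]a,b[$ at an interior point and using monotonicity to bound on each side whichever of $g,h$ stays bounded there, that $(gh)^-\in L^1(p)$, so that $\Cov_p[g,h]$ is well-defined in $(-\infty,\infty]$. Then $\Cov_p[g,h]=\tfrac12\E[(g(X)-g(Y))(h(X)-h(Y))]$ for $X,Y\sim p$ independent, the right-hand integrand being a.s.\ nonnegative. Writing $g(X)-g(Y)=\int_a^b g'(s)(\mathbf 1_{s\le X}-\mathbf 1_{s\le Y})\,ds$ a.s.\ (valid even if $\lim_{x\downarrow a}g(x)=-\infty$, since the indicator difference confines the integration to the bounded interval with endpoints $X\wedge Y$ and $X\vee Y$), and likewise for $h$, the product expands into a double integral whose integrand $g'(s)h'(t)(\mathbf 1_{s\le X}-\mathbf 1_{s\le Y})(\mathbf 1_{t\le X}-\mathbf 1_{t\le Y})$ is pointwise nonnegative (on $\{X\ge Y\}$ both indicator differences are $\ge 0$; on $\{X<Y\}$ both are $\le 0$). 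Tonelli then lets me exchange $\E$ with the $ds\,dt$ integration, and the elementary computation $\E[(\mathbf 1_{s\le X}-\mathbf 1_{s\le Y})(\mathbf 1_{t\le X}-\mathbf 1_{t\le Y})]=2(\bar P(s\vee t)-\bar P(s)\bar P(t))=2K(s,t)$ completes the identity.

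For the extension to $g\in H^1(p,w_1)$, $h\in H^1(p,w_2)$ with $C(p,w_i)<\infty$, I would use Lemma \ref{dif_increasing} to write $g=g_1-g_2$, $h=h_1-h_2$ with $g_i,h_j$ increasing and in the respective Sobolev spaces, apply the identity just obtained to each pair $(g_i,h_j)$, and recombine using the bilinearity of $\Cov_p$, of $\E_p[g'(-\Ti h)]$ and of $\E[h'(X)k(X,Y)g'(Y)]$ in $(g',h')$; each $\Cov_p[g_i,h_j]$ is finite by Cauchy--Schwarz and \eqref{PI} (since $\Var_p[g_i]\le C(p,w_1)\E_p[|g_i'|^2w_1]<\infty$, and similarly for $h_j$), so no $\infty-\infty$ arises in the recombination. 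The only genuine difficulty is measure-theoretic: because $g,h$ lie merely in $L^1(p)$ and not in $L^2(p)$, one must keep careful track of signs and identify which quantities can equal $+\infty$ so that the representation of increments and the two applications of Tonelli are legitimate; beyond this bookkeeping there is no analytic obstacle.
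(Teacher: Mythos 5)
Your proof is correct, but it reaches the key identity by a genuinely different route than the paper. For increasing $g,h\in L^1(p)$ the paper works directly on $\E_p[-g'\,\Ti h]$: it picks $c$ with $h\le\E_p[h]$ on $]a,c]$, uses the forward representation $p\,\Ti h(x)=\int_a^x(h-\E_p[h])p$ on $]a,c]$ and the backward one $p\,\Ti h(x)=\int_x^b(\E_p[h]-h)p$ on $[c,b[$, and applies Fubini--Tonelli on each piece (the sign of $h-\E_p[h]$ being constant there) to land on $\Cov_p[g,h]$; the kernel form is then obtained from \eqref{caracL} exactly as you do. You instead prove $\Cov_p[g,h]=\int\!\!\int K(s,t)h'(t)g'(s)\,ds\,dt$ first, via the classical symmetrisation $\Cov_p[g,h]=\tfrac12\E[(g(X)-g(Y))(h(X)-h(Y))]$, the increment representation $g(X)-g(Y)=\int g'(s)(\mathbf 1_{s\le X}-\mathbf 1_{s\le Y})\,ds$, and the computation $\E[(\mathbf 1_{s\le X}-\mathbf 1_{s\le Y})(\mathbf 1_{t\le X}-\mathbf 1_{t\le Y})]=2K(s,t)$, and then recover the $\Ti$-form from \eqref{caracL}. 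Your route has the advantage that nonnegativity of the integrand is manifest (both indicator differences share the sign of $X-Y$), so Tonelli applies without choosing a splitting point or invoking the dual representation of $\Ti h$, and your preliminary check that $(gh)^-\in L^1(p)$ together with the extended-value bookkeeping is sound; the paper's route stays entirely inside the Stein-operator formalism and yields the first equality without passing through the kernel form. The extension to $g\in H^1(p,w_1)$, $h\in H^1(p,w_2)$ is identical in both arguments (Lemma \ref{dif_increasing} plus bilinearity), with your added remark on finiteness of each $\Cov_p[g_i,h_j]$ making the absence of $\infty-\infty$ explicit, which the paper leaves implicit.
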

We will only use this result in the case of two functions
in $H^1(p,w)$ for the same $w$, but we emphasize that it holds in a
more general setting. 
 
Using the Hoeffding covariance identity, we can design a set of weights with
finite Poincar\'e constants (see \cite[Corollary 3.6]{Swan}).
For this purpose, we need the next lemma which says that the maximization in $\PIpw$ can be restricted to increasing functions (such a result is not new and can, for instance, be read from \cite{Miclo}).
\begin{lemma}[Monotonicity]
\label{lem increasing}
For all non monotone functions $h\in H^1(p,w)$, we can find an increasing function $g\in H^1(p,w)$ such that
$$
\frac{\Var_p[h]}{\E_p[|h'|^2]}< \frac{\Var_p[g]}{\E_p[|g'|^2]}.
$$
\end{lemma}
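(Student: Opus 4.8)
The plan is to show that if $h \in H^1(p,w)$ is not monotone, then one can build an increasing $g \in H^1(p,w)$ with a strictly larger Rayleigh quotient $\Var_p[g]/\E_p[|g'|^2]$ (note the weight $w$ is implicit in the statement's $\E_p[|h'|^2]$, since $h' \in L^2(pw)$; I will keep writing $\E_p[|h'|^2 w]$ to be safe). The natural candidate is the ``monotone rearrangement'' of $h$ with respect to $p$: let $F_h(t) = \mathbb{P}(h(X) \le t)$ be the law of $h(X)$ under $X \sim p$, and set $g = F_h^{-1} \circ P$, where $P$ is the c.d.f. of $p$. By construction $g$ is increasing, $g(X)$ has the same distribution as $h(X)$ under $X \sim p$ — hence $\Var_p[g] = \Var_p[h]$ exactly — and the issue reduces entirely to proving the \emph{strict} inequality $\E_p[|g'|^2 w] < \E_p[|h'|^2 w]$ together with the regularity claim $g \in H^1(p,w)$.

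First I would establish the inequality $\E_p[|g'|^2 w] \le \E_p[|h'|^2 w]$. The clean way is via the co-area / layer-cake formula: writing $N_h(t)$ for the (essential) number of solutions of $h(x) = t$, one has by the co-area formula $\E_p[|h'|^2 w] = \int_{\R} \big( \sum_{x : h(x) = t} (|h'| w p)(x) \big)\, dt$ — more precisely one disintegrates $\int |h'|^2 w p\, dx$ over level sets — while for the monotone $g$ the corresponding level set is a single point. The pointwise comparison then follows from a convexity (Cauchy–Schwarz / Jensen) argument applied on each level set: spreading the same ``mass'' $pw$ across several branches of $h$ versus concentrating it on the single branch of $g$ can only decrease the Dirichlet-type energy. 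An alternative, perhaps more robust route avoiding co-area technicalities: approximate and use that $g$ solves the isoperimetric-type extremal problem ``minimize $\E_p[|f'|^2 w]$ among all $f$ with $f(X) \stackrel{d}{=} h(X)$'', which is a standard rearrangement fact; I would cite \cite{Miclo} here as the excerpt already flags that this result is not new.

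The main obstacle is the \emph{strictness} of the inequality, and the bookkeeping needed to make the rearrangement argument rigorous under the paper's weak-differentiability hypotheses. For strictness: if $h$ is genuinely non-monotone, there is a level $t_0$ (in fact a positive-measure set of levels $t$) at which $h$ has at least two branches carrying positive $pw$-mass, and on that set the convexity inequality used above is strict, because equality in Cauchy–Schwarz would force the ``missing'' branches to carry zero energy, contradicting non-monotonicity on a set of positive measure. Turning this local strict gain into a strict inequality of the integrals requires that the set of ``bad'' levels $t$ has positive Lebesgue measure, which I would deduce from non-monotonicity of $h$ together with absolute continuity. For regularity ($g \in H^1(p,w)$): one checks $g \in L^2(p)$ from $\Var_p[g] = \Var_p[h] < \infty$ and $\E_p[g] = \E_p[h]$; weak differentiability of $g = F_h^{-1}\circ P$ follows since $P$ is absolutely continuous and $F_h^{-1}$ is monotone, and the energy bound just proved gives $g' \in L^2(pw)$. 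I would handle the technical subtleties (null sets where $h' = 0$, atoms of the law of $h(X)$ forcing flat pieces of $F_h^{-1}$, measurability of $N_h$) either by the standard reduction to locally Lipschitz $h$ via truncation and mollification, or by quoting the rearrangement machinery wholesale; given the excerpt's tone (``such a result is not new''), a compact argument with a pointer to \cite{Miclo} is appropriate rather than a from-scratch co-area proof.
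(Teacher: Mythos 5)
There is a genuine gap at the heart of your plan: the ``standard rearrangement fact'' you invoke --- that the increasing rearrangement $g=F_h^{-1}\circ P$ satisfies $\E_p[|g'|^2w]\le\E_p[|h'|^2w]$ (or even minimizes the energy among functions equimeasurable with $h$) --- is false for general $(p,w)$. Your own co-area bookkeeping shows where it breaks: the level-$t$ contribution to the energy of $g$ is $w(y_t)p(y_t)^2/\rho_h(t)$, with $y_t=P^{-1}(F_h(t))$ and $\rho_h$ the density of $h(X)$, while for $h$ it is $\sum_{x:h(x)=t}|h'(x)|w(x)p(x)\ge \bigl(\sum_{x}\sqrt{w(x)}\,p(x)\bigr)^2/\rho_h(t)$ by Cauchy--Schwarz. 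The comparison you need is therefore $\sqrt{w(y_t)}\,p(y_t)\le\sum_{x\in\partial\{h>t\}}\sqrt{w(x)}\,p(x)$; but $y_t$ is in general \emph{not} one of the boundary points of $\{h>t\}$, so no Jensen/Cauchy--Schwarz argument on the level set of $h$ can deliver it. What is really being assumed is that right half-lines are isoperimetric minimizers for the boundary weight $\sqrt{w}\,p$ relative to the measure $p$, and this fails for many densities. Concretely, take $p(x)=2x$ on $]0,1[$, $w=1$, and $h$ Lipschitz with $h(x)=1-x$ near both endpoints and a non-monotone wiggle in the middle taking values in $[1/8,3/4]$. The levels $t\in\,]3/4,1[$ are attained only near $x=0$, so $F_h(t)=1-(1-t)^2$ there, and for $x$ close to $1$ the rearrangement is explicitly $g(x)=1-\sqrt{1-x^2}$, whence
\begin{equation*}
\E_p[|g'|^2]\;\ge\;\int_{0.97}^{1}\frac{x^2}{1-x^2}\,2x\,dx\;=\;\infty ,
\end{equation*}
while $h$ has finite energy and the same variance. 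So $g\notin H^1(p,w)$, and both your key inequality and the regularity claim for $g$ (which you derive from that inequality) collapse; the strictness discussion never gets off the ground.

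The failure is structural, not a technicality one can mollify away: already for a monotone decreasing $h(x)=1-x$ with this $p$, the increasing rearrangement $1-\sqrt{1-x^2}$ has infinite energy, and a small non-monotone perturbation does not cure it; even allowing the decreasing rearrangement as well does not help in general (for multimodal $p$ the isoperimetric sets need not be half-lines of either orientation). A rearrangement proof would thus require extra hypotheses on $p,w$, or Miclo's actual argument, which is not the naive P\'olya--Szeg\H{o}-type statement you rely on. The paper sidesteps all of this by the dual strategy: instead of fixing the law of $h(X)$ (hence the variance) and trying to lower the energy, it fixes the energy and raises the variance. It takes $g=\int_c^{\cdot}|h'|$, which is increasing and satisfies $|g'|=|h'|$ a.e., hence has \emph{exactly} the same weighted energy for every weight; choosing $d$ with $g(d)=\E_p[g]$ gives $\Var_p[h]\le\E_p[|h-h(d)|^2]\le\E_p[(\int_d^{\cdot}|h'|)^2]=\Var_p[g]$, equality forcing $h$ to be monotone, and a truncation argument handles the case $g\notin L^2(p)$ (where the quotient for the truncations even blows up). If you want to salvage your outline, you would have to either prove the isoperimetric property of half-lines for $\sqrt{w}\,p$ under the paper's standing assumptions (it is not available) or switch to an energy-preserving construction of this kind.
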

\begin{theorem}[Papathanasiou-type upper bound]
\label{theo1}
Let $h\in L^1(p)$ be weakly differentiable such that $h'<0$ a.e. and define $w_h ={-\Ti h}/{h'}$. Then
$C(p,w_h)\leq 1$. In other words,
\begin{equation*}
\Var_p[g]\leq \E_p\left[\frac{-\Ti h}{h'}\left|g'\right|^2\right]
\end{equation*}
for all $g\in H^1(p,w_h)$.  
Equality holds if and only if $h\in L^2(p)$
and $g$ is an affine transformation of $h$. Hence, $C(p,w_h)=1$ if $h\in L^2(p)$.
\end{theorem}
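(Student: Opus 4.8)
The plan is to combine the Hoeffding-type covariance identity of Theorem~\ref{theo2} with one application of the arithmetic--geometric mean inequality. I would start by recording that $w_h$ is a bona fide, a.e.\ positive weight: by \eqref{caracL} we have $-\Ti h(x)=\E[k(x,X)h'(X)]$, and since $k\ge 0$ while $h'<0$ a.e., this quantity is negative, so $w_h=(-\Ti h)/h'>0$ a.e.; the required local integrability of $w_h$ and $pw_h$ follows from that of $\Ti h$ together with $h'<0$. By the monotonicity lemma (Lemma~\ref{lem increasing}) it then suffices to prove the stated inequality for \emph{increasing} $g\in H^1(p,w_h)$, since $C(p,w_h)$ is the supremum over $H^1(p,w_h)$ of the Rayleigh quotient $\Var_p[g]/\E_p[|g'|^2w_h]$ and a decreasing $g$ can be replaced by $-g$. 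For such a $g$ we have $g\in H^1(p,w_h)\subset L^2(p)\subset L^1(p)$, so Theorem~\ref{theo2} applied with both arguments equal to $g$ gives
\begin{equation*}
\Var_p[g]=\E\big[g'(X)\,k(X,Y)\,g'(Y)\big]=\int\int k(x,y)\,g'(x)\,g'(y)\,p(x)\,p(y)\,\mathrm{d}x\,\mathrm{d}y.
\end{equation*}

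The heart of the argument is the pointwise estimate, valid for a.e.\ $x,y$ because $h'(x)h'(y)>0$,
\begin{equation*}
g'(x)\,g'(y)\le\frac12\left(\frac{h'(y)}{h'(x)}\,g'(x)^2+\frac{h'(x)}{h'(y)}\,g'(y)^2\right),
\end{equation*}
which is just $2\sqrt{ab}\le a+b$ together with $g'(x)g'(y)\le|g'(x)g'(y)|$. Multiplying by $k(x,y)p(x)p(y)\ge 0$ and integrating (every integrand involved is nonnegative, so Tonelli applies without further hypotheses), the symmetry $k(x,y)=k(y,x)$ makes the two resulting terms equal, and \eqref{caracL} gives
\begin{equation*}
\Var_p[g]\le\int\frac{g'(x)^2\,p(x)}{h'(x)}\left(\int k(x,y)\,h'(y)\,p(y)\,\mathrm{d}y\right)\mathrm{d}x=\int g'(x)^2\,\frac{-\Ti h(x)}{h'(x)}\,p(x)\,\mathrm{d}x=\E_p\!\left[|g'|^2w_h\right].
\end{equation*}
This proves the inequality for increasing $g$, whence $C(p,w_h)\le 1$, and hence the inequality holds for every $g\in H^1(p,w_h)$.

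For the equality statement I would argue as follows. Lemma~\ref{lem increasing} forbids a non-monotone $g$ from attaining the value $1$ of the Rayleigh quotient, so any $g$ saturating the inequality is monotone and, replacing it by $-g$ if needed, the chain of inequalities above applies to it. Equality then forces equality a.e.\ in the arithmetic--geometric mean step, i.e.\ $g'(x)g'(y)\ge 0$ and $h'(y)^2g'(x)^2=h'(x)^2g'(y)^2$ for almost every $(x,y)\in\,]a,b[^2$ (which is where $k>0$, as $P,\bar P>0$ there); since $h'<0$ a.e., this means $g'/h'$ equals a.e.\ a constant $c$, so $g=ch+d$. Apart from the trivial case $c=0$ (a constant $g$, where both sides vanish), $g\in L^2(p)$ then forces $h\in L^2(p)$. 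Conversely, if $h\in L^2(p)$ and $g=ch+d$, then applying the Hoeffding identity to the increasing function $-h$ gives $\Var_p[g]=c^2\Var_p[h]=c^2\E_p[-h'\,\Ti h]=c^2\E_p[(h')^2w_h]=\E_p[|g'|^2w_h]$; in particular $g=h$ saturates, so $C(p,w_h)=1$ when $h\in L^2(p)$.

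I expect the difficulties to be administrative rather than conceptual: checking that $w_h$ satisfies the technical definition of a weight; making sure Theorem~\ref{theo2} is invoked only for increasing functions lying in $L^1(p)$ (which is automatic here); and, in the equality discussion, being careful that the arithmetic--geometric mean equality conditions are read on the full-measure set $\{k>0\}$, so that ``$g'$ has constant sign'' and ``$g'/h'$ is constant'' are genuine almost-everywhere statements. The inequality itself is a one-line consequence of AM--GM applied to the nonnegative symmetric kernel $k$, layered on top of the covariance identity already furnished by Theorem~\ref{theo2}.
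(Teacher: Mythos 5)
Your proof is correct and takes essentially the same route as the paper's: reduce to increasing $g$ via Lemma \ref{lem increasing}, apply the Hoeffding identity of Theorem \ref{theo2}, and exploit the symmetry and positivity of $k$ through a quadratic inequality — your pointwise AM--GM step is just the symmetrized form of the Cauchy--Schwarz argument the paper applies to $G(x,y)=g'(x)(-h'(x))^{-1/2}\sqrt{-k(x,y)h'(y)}$, with Tonelli absorbing the integrability check the paper performs separately for $G$. The equality analysis (constancy of $g'/h'$) and the verification via Theorem \ref{theo2} that $\E_p[|h'|^2 w_h]=\Var_p[h]$, so that $h$ itself saturates when $h\in L^2(p)$, likewise coincide with the paper's argument.
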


Aside from the requirement of monotonicity, there is near total freedom of choice for  the
function $h$ in Theorem \ref{theo1}; the choice
$h = -\id$ (recall that $\id$ is the identity function) is intuitively a most natural one, and as we
now show, whenever this choice is allowed then it is 
optimal in the following sense. 

\begin{corollary}\label{lem:optimstek}
  Let $X \sim p$. If $\mathrm{Var}[X]<\infty$ then
  $\tau := -\Ti\mathrm{id}$ is the optimal weight with respect to
  the $L^1(p)$ norm, in the sense that
  $ \mathrm{Var}[X] = \| \tau\|_{L^1(p)} \le \| w \|_{L^1(p)}$ for all
  weights $w$ such that $C(p, w) = 1$.
\end{corollary}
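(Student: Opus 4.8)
The plan is to decouple the statement into two claims: first, that $\|\tau\|_{L^1(p)} = \mathrm{Var}[X]$, and second, that $\mathrm{Var}[X] \le \|w\|_{L^1(p)}$ for every weight $w$ with $C(p,w)=1$. For the first claim I would start by noting that $\tau \ge 0$ a.e.: writing $\mu = \E_p[\mathrm{id}]$, we have $\tau(x)p(x) = -\int_a^x (t-\mu)p(t)\,\mathrm dt$, which is nonnegative because the integrand is $\le 0$ on $]a,\mu]$, while for $x>\mu$ one uses $\E_p[\mathrm{id}-\mu]=0$ to rewrite $-\int_a^x (t-\mu)p = \int_x^b (t-\mu)p \ge 0$. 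Hence $\|\tau\|_{L^1(p)} = \E_p[\tau]$. To evaluate $\E_p[\tau]$ I would apply Theorem \ref{theo2} with $g=h=\mathrm{id}$ (which is weakly differentiable, increasing, and in $L^1(p)$ since $\mathrm{Var}[X]<\infty$), obtaining $\mathrm{Var}[X] = \Cov_p[\mathrm{id},\mathrm{id}] = \E_p[\mathrm{id}'\,(-\Ti\mathrm{id})] = \E_p[\tau]$. So $\|\tau\|_{L^1(p)} = \mathrm{Var}[X]$. (Equivalently, this follows from the equality case of Theorem \ref{theo1} applied to $h=-\mathrm{id}$, since then $w_h = -\Ti h / h' = -\Ti\mathrm{id} = \tau$ and $h\in L^2(p)$; or from a direct Fubini computation of $\E_p[\tau]$.)

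For the second claim I would argue as follows. If $\|w\|_{L^1(p)} = \infty$ the inequality is trivial, so assume $\|w\|_{L^1(p)} = \int_a^b wp < \infty$. Then the constant function $\mathrm{id}' \equiv 1$ lies in $L^2(pw)$, and since $\mathrm{id}\in L^2(p)$ (again because $\mathrm{Var}[X]<\infty$) we get $\mathrm{id}\in H^1(p,w)$. Because $C(p,w)=1$, inequality \eqref{PI} holds with constant $1$ for every element of $H^1(p,w)$, in particular for $h=\mathrm{id}$, which gives $\mathrm{Var}[X] = \mathrm{Var}_p[\mathrm{id}] \le \E_p[|\mathrm{id}'|^2 w] = \E_p[w] = \|w\|_{L^1(p)}$. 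Combining the two claims yields $\|\tau\|_{L^1(p)} = \mathrm{Var}[X] \le \|w\|_{L^1(p)}$, which is the assertion.

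I do not expect a genuine obstacle here: the corollary is essentially a repackaging of Theorem \ref{theo1} together with the remark that $\mathrm{id}$ is an admissible test function for $\mathrm{PI}(p,w)$ as soon as $\|w\|_{L^1(p)}<\infty$. The only two points needing care are (i) the case distinction on whether $\|w\|_{L^1(p)}$ is finite — if it is infinite, $\mathrm{id}$ may fail to belong to $H^1(p,w)$ and one must simply invoke triviality — and (ii) the nonnegativity of $\tau$, which is what upgrades the identity $\E_p[\tau]=\mathrm{Var}[X]$ to the norm identity $\|\tau\|_{L^1(p)}=\mathrm{Var}[X]$. Both are routine.
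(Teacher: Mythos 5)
Your proof is correct and follows essentially the same route as the paper: handle the trivial case $\|w\|_{L^1(p)}=\infty$, note that $\mathrm{id}\in H^1(p,w)$ when $\E_p[w]<\infty$ so the Poincar\'e inequality with constant $1$ gives $\Var[X]\le\E_p[w]$, and use Theorem \ref{theo2} with $g=h=\mathrm{id}$ to get $\Var[X]=\E_p[\tau]$. Your explicit verification that $\tau\ge 0$ a.e.\ (so that $\E_p[\tau]=\|\tau\|_{L^1(p)}$) is a small point the paper leaves implicit, but it is a welcome clarification rather than a different argument.
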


The corresponding weight $\tau = -\Ti \id$ is called the
Stein kernel of $p$. It has long been known to be an important handle
on the density $p$ and multivariate extensions are a topic of active
research; see \cite{Saumard, Swan3,Courtade} for more detail and further
references.

\subsection{The pseudo-inverse Sturm-Liouville  operator}
\label{sec:pseudo-inverse-sturm}

In a spirit similar to the Stein operator $\T$ and its pseudo-inverse $\Ti$, we want to define the pseudo-inverse of the Sturm-Liouville operator  $\LS$. First, we need to find a proper domain for this  operator. 
We write $H^1_c(p,w)=\left\{h\in H^1(p,w): \E_p[h]=0\right\}$, with $H^1(p, w)$ as in Section
\ref{sec:kern-covar-ineq}, and we endow it with the scalar
product $(f,h)\mapsto \E_p[f'h'w]$. If $C(p,w)<\infty$, then
$\left\|h\right\|_{L^2(p)}^2\leq C(p,w)
\left\|h'\right\|_{L^2(pw)}^2=C(p,w)\left\|h\right\|_{H^1_c(p,w)}^2$
so that the resulting norm is equivalent to the usual norm
$\left\|h\right\|^2_{H^1(p,w)}=\left\|h\right\|^2_{L^2(p)}+\left\|h'\right\|_{L^2(pw)}^2$.
We define the space of functions
\begin{equation}E^2(p,w)=\left\{f\in L^2(pw)\cap L^1_{\text{loc}}(]a,b[): x\mapsto\int_{x_0}^x f\in L^2(p)\right\}\label{eq:e2pw}
\end{equation}
where $a < x_0< b$ is finite and
arbitrary.
We endow $E^2(p,w)$ with the norm of $L^2(pw)$. Finally we denote the
integral and differential operators by
$$ If=\int_{x_0}^{\cdot}f-\E_p\left[\int_{x_0}^{\cdot}f\right] \mbox{ and
}
Dh=h'$$ for $f\in E^2(p,w)$ and  $h\in H^1(p,w)$, respectively. Note how neither the
definition of $E^2(p,w)$ nor the definition of $I$ depend on the
choice of $x_0\in ]a,b[$.
\begin{proposition}
\label{ID}
The operators $I:E^2(p,w)\rightarrow H^1_c(p,w)$ and
$D:H^1_c(p,w)\rightarrow E^2(p,w)$ are continuous and satisfy $I=D^{-1}$. 
If $C(p,w)<\infty$, then $E^2(p,w)=L^2(pw)\cap L^1_{\text{loc}}(]a,b[)$ 
and the following statements are equivalent :
\begin{enumerate}
\item $E^2(p,w)$ is a Hilbert space,
\item $H^1_c(p,w)$ is a Hilbert space,
\item $L^2(pw)\subset L^1_{\text{loc}}(]a,b[)$,
\item $E^2(p,w)=L^2(pw)$.
\end{enumerate}
\end{proposition}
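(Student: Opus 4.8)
The plan is to establish the two distinct assertions separately: first the identity $E^2(p,w) = L^2(pw) \cap L^1_{\text{loc}}(]a,b[)$ under the hypothesis $C(p,w) < \infty$, and then the chain of equivalences (1)--(4). For the first part, one inclusion is trivial from the definition \eqref{eq:e2pw}, so the content is: if $f \in L^2(pw) \cap L^1_{\text{loc}}(]a,b[)$ and $C(p,w) < \infty$, then $x \mapsto \int_{x_0}^x f$ lies in $L^2(p)$. I would argue that $F := \int_{x_0}^{\cdot} f$ is weakly differentiable with $F' = f \in L^2(pw)$, and $F \in L^1_{\text{loc}}$ since it is locally absolutely continuous; hence $F \in H^1(p,w)$ would follow the moment we know $F \in L^2(p)$ --- but that is precisely what we want. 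The right maneuver is instead to work with the centered function $\hat F = F - \E_p[\int_{x_0}^{\cdot} f]$ (which requires knowing $\int_{x_0}^{\cdot} f \in L^1(p)$, itself not obvious) or, more robustly, to invoke the Poincar\'e inequality in the form that a weakly differentiable $h$ with $h' \in L^2(pw)$ and $h \in L^1(p)$ already satisfies $\Var_p[h] \le C(p,w)\,\E_p[|h'|^2 w] < \infty$, so that finiteness of one moment of $F$ forces $F \in L^2(p)$. Establishing that $F$ has a finite first moment is the technical crux here: I expect to use that $p$ has finite mass together with a crude bound on the growth of $F$ near the endpoints, exploiting $C(p,w) < \infty$ via the Muckenhoupt-type control this entails.

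For the equivalences, the natural route is a cycle. The implication (3) $\Rightarrow$ (4) is immediate from the identity just proved (since $L^2(pw) \subset L^1_{\text{loc}}$ makes the intersection collapse). The implication (4) $\Rightarrow$ (1) amounts to: $E^2(p,w) = L^2(pw)$ is a Hilbert space because $L^2(pw)$ is (it is a genuine $L^2$ space), and the $L^2(pw)$-norm is exactly the norm $E^2(p,w)$ carries by definition. For (1) $\Leftrightarrow$ (2): by Proposition \ref{ID}, $D$ and $I$ are mutually inverse continuous linear bijections between $E^2(p,w)$ and $H^1_c(p,w)$, i.e.\ a topological isomorphism of normed spaces; completeness transfers across any such isomorphism, so one space is complete iff the other is. That leaves closing the loop, for which I would prove (2) $\Rightarrow$ (3) or, perhaps more transparently, $\neg$(3) $\Rightarrow$ $\neg$(4): if there exists $f \in L^2(pw) \setminus L^1_{\text{loc}}(]a,b[)$, then $f \in L^2(pw)$ but $f \notin E^2(p,w)$ (the latter by definition requires $f \in L^1_{\text{loc}}$), so the inclusion $E^2(p,w) \subsetneq L^2(pw)$ is strict, contradicting (4). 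Combined with (4) $\Rightarrow$ (3) [trivial, as $E^2(p,w) \subseteq L^1_{\text{loc}}$ always], this gives (3) $\Leftrightarrow$ (4), and then (3) $\Rightarrow$ (4) $\Rightarrow$ (1) $\Leftrightarrow$ (2) $\Rightarrow$ (3) closes everything.

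The main obstacle, as flagged above, is the first part: proving that $\int_{x_0}^{\cdot} f \in L^2(p)$ for every $f \in L^2(pw) \cap L^1_{\text{loc}}$ when $C(p,w) < \infty$. The difficulty is that this is essentially a statement that the finiteness of the Poincar\'e constant controls the integrability of antiderivatives, and one must be careful about the behaviour at the endpoints $a, b$ which may be infinite. I anticipate invoking $C(p,w) < \infty \Leftrightarrow B < \infty$ (the Muckenhoupt criterion \eqref{ex:muck}, or rather its weighted analogue), which gives finiteness of quantities like $\bar P(x) \int_{x_0}^x 1/(pw)$, and then pairing this with Cauchy--Schwarz applied to $\big(\int_{x_0}^x f\big)^2 \le \big(\int_{x_0}^x |f|^2 pw\big)\big(\int_{x_0}^x 1/(pw)\big)$ to bound $\int_m^b \big(\int_{x_0}^x f\big)^2 p(x)\,dx$ by a constant multiple of $\|f\|_{L^2(pw)}^2$, and symmetrically on $]a,m]$. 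Once this weighted Hardy-type inequality is in hand, the rest of the argument is bookkeeping.
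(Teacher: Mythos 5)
Your reduction of the equivalences has a logical hole: the cycle is never closed. You prove (3) $\Rightarrow$ (4) $\Rightarrow$ (1) $\Leftrightarrow$ (2), and then for "closing the loop" you substitute $\neg$(3) $\Rightarrow$ $\neg$(4) — but that is literally the trivial implication (4) $\Rightarrow$ (3), which only gives (3) $\Leftrightarrow$ (4) and leaves (1)/(2) with no implication back to (3)/(4). The hard step, which your argument omits, is (1) $\Rightarrow$ (3) (equivalently (2) $\Rightarrow$ (3)): showing that completeness of $E^2(p,w)$ \emph{in its own right} fails when some $f\in L^2(pw)\setminus L^1_{\text{loc}}(]a,b[)$ exists. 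The paper does this by truncating, $f_n=\max\{-n,\min\{f,n\}\}$: each $f_n$ lies in $L^2(pw)\cap L^1_{\text{loc}}=E^2(p,w)$ and $f_n\to f$ in $L^2(pw)$ by dominated convergence, so $E^2(p,w)$ is a non-closed subspace of $L^2(pw)$ carrying the induced norm, hence not a Hilbert space. Your observation that $f\notin E^2(p,w)$ only shows $E^2(p,w)\subsetneq L^2(pw)$, i.e.\ that (4) fails; it says nothing about (1). (Separately, you invoke the first assertion of the proposition — continuity of $I,D$ and $I=D^{-1}$ — rather than proving it; that part is short, via Brezis' results on primitives and weak derivatives and the choice of norms, but it cannot be cited circularly.)

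The crux step of the first part also does not go through as written. From Cauchy--Schwarz you get $\bigl(\int_{x_0}^x f\bigr)^2\le \|f\|^2_{L^2(pw)}\int_{x_0}^x \frac{1}{pw}$, and you then propose to integrate against $p$ and control the result by the Muckenhoupt constant $B$. But finiteness of $B$ bounds $\bar P(x)\int_m^x \frac{1}{pw}$ pointwise, not the integral $\int_m^b p(x)\int_{x_0}^x\frac{1}{pw}\,dx=\int \frac{\bar P}{pw}$, which can be infinite even when $B<\infty$: for the exponential density $p(x)=e^{-x}$ with $w=1$ one has $C(p,1)=4$ and $B\le 1$, yet $\int_0^{\infty}e^{-x}\int_0^x e^{t}\,dt\,dx=\infty$, so your bound degenerates to $\infty\cdot\|f\|^2$. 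The weighted Hardy inequality $\int_m^b\bigl(\int_m^x f\bigr)^2p\le 4B_+\|f\|^2_{L^2(pw)}$ is indeed true under $B_+<\infty$, but its proof needs Muckenhoupt's refinement (inserting a factor such as $\bigl(\int_m^x \frac{1}{pw}\bigr)^{1/2}$ before Cauchy--Schwarz) or an explicit citation of that theorem — and note the paper only states the unweighted criterion \eqref{ex:muck}, so the weighted version would have to be imported correctly. The paper avoids all of this with an elementary argument: write $f=h'$ with $h\in L^1_{\text{loc}}$, truncate $h_n=\max\{-n,\min\{h,n\}\}$ so that $h_n\in H^1(p,w)$ with $\|h_n'\|_{L^2(pw)}\le\|f\|_{L^2(pw)}$, apply the Poincar\'e inequality to the truncations, and let monotone convergence force $\|h\|_{L^2(p)}<\infty$; this also sidesteps your worry about a first moment of the primitive. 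Your route is salvageable by citing the Muckenhoupt--Hardy theorem, but as proposed the key inequality fails.
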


The inclusion $L^2(pw)\subset L^1_{\text{loc}}(]a,b[)$ is true whenever $pw$ is
continuous and strictly positive on $]a,b[$, which is the case in many
classical examples. 
For the remainder of this section, we assume that $p$ and $w$ are
chosen so that
\begin{itemize}
	\item[(H1)]$C(p,w)<\infty$,
	\item[(H2)]$L^2(pw)\subset L^1_{\text{loc}}(]a,b[)$.
\end{itemize}
Those hypotheses ensure that $E^2(p,w)$ is a Hilbert space by Proposition \ref{ID}. The same proposition says that $E^2(p,w)=L^2(p,w)$, so the notation $E^2(p,w)$ is a bit superficial, which is why we dropped it from the statements of the results in the Introduction. We shall nevertheless continue  using this notation in the current section so as  to emphasize the relation with $H^1_c(p,w)$. 
With these notations, we recall the pseudo-inverse Sturm-Liouville operator $\Li:E^2(p,w)\rightarrow E^2(p,w)$  defined by
\begin{equation*}
\Li f= -\mathcal \Ti I f = \frac{1}{p(x)w(x)}\int_a^b K(x,y)f(y)dy=\frac{1}{w(x)}\E[k(x,X)f(X)]
\end{equation*}
where $X\sim p$. The following then holds. 
\begin{proposition}
\label{Li continuous}
Under (H1)-(H2),  $\Li$ is well-defined, continuous, self-adjoint, positive and satisfies
$\left\|\Li \right\|_{E^2(p,w)\rightarrow E^2(p,w)}=C(p,w).$
\end{proposition}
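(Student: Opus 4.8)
The plan is to establish the four asserted properties of $\Li = -\Ti I$ one at a time, leaning on the factorization $\Li = -\Ti\circ I$ together with Proposition \ref{ID} and the Hoeffding-type identity of Theorem \ref{theo2}. First I would check that $\Li$ is well-defined and continuous as a map $E^2(p,w)\to E^2(p,w)$: by Proposition \ref{ID}, $I$ maps $E^2(p,w)$ continuously into $H^1_c(p,w)$, and for $g\in H^1_c(p,w)\subset L^1(p)$ that is weakly differentiable, formula \eqref{caracL} (equivalently \eqref{eq:chenoperator}) gives $-\Ti g(x) = \frac{1}{p(x)w(x)}\int_a^b K(x,y)g'(y)\,\mathrm{d}y \cdot w(x) = \frac1{w}\E[k(x,X)g'(X)]\,w$, so that $-\Ti\circ I f(x) = \frac1{w(x)}\E[k(x,X)f(X)]$; here I must verify the resulting function indeed lies in $E^2(p,w)=L^2(pw)$ (using (H1)-(H2)) and that the composition norm is bounded, e.g.\ by $\|-\Ti\|_{H^1_c(p,w)\to L^2(pw)}\,\|I\|$. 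The cleanest route to the norm estimate is via the identity $(\T\circ\Ti)h = h - \E_p[h]$ and $D\circ I = \mathrm{id}$, which shows that $\Li$ is a genuine right/left inverse of $\LS$ on the appropriate spaces, so $\|\Li f\|$ compares to $\|f\|$ through the Poincaré constant.

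Next I would prove self-adjointness and positivity together, since both follow from a single bilinear computation. For $f,g\in E^2(p,w)$, write $F = If,\ G = Ig \in H^1_c(p,w)$, so $F' = f$, $G' = g$ a.e.\ and $\E_p[F]=\E_p[G]=0$. Then Theorem \ref{theo2} (in the $H^1(p,w)$ form, applied after the decomposition of $F,G$ into increasing parts via Lemma \ref{dif_increasing} to legitimize the identity for signed functions) gives
\begin{equation*}
\langle \Li f, g\rangle_{L^2(pw)} = \E_p[(-\Ti I f)\,g\,w] = \E_p[(-\Ti F)\,G'\,w] = \Cov_p[F,G],
\end{equation*}
which is symmetric in $f\leftrightarrow g$, hence $\Li$ is self-adjoint; and taking $g=f$ yields $\langle\Li f,f\rangle = \Var_p[F]\ge 0$, hence positive. (One subtlety: the inner product on $E^2(p,w)$ is the $L^2(pw)$ one, and I should make sure the duality pairing used matches, which it does since $\langle\Li f,g\rangle_{E^2(p,w)} = \E_p[\Li f\cdot g\, w]$.)

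Finally, for the norm I would combine the previous display with the definition of $C(p,w)$. Since $\|\Li\| = \sup_{f\ne 0}\langle\Li f,f\rangle/\|f\|^2_{L^2(pw)}$ by self-adjoint positivity, and $\langle\Li f,f\rangle = \Var_p[F]$ with $\|f\|_{L^2(pw)}^2 = \|F'\|_{L^2(pw)}^2 = \|F\|_{H^1_c(p,w)}^2$, the supremum over $f\in E^2(p,w)$ is exactly the supremum of $\Var_p[F]/\E_p[|F'|^2 w]$ over $F\in H^1_c(p,w)$, which by definition is $C(p,w)$ (the mean-zero restriction being harmless, as $\Var_p$ is translation-invariant). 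The main obstacle I anticipate is the bookkeeping around Theorem \ref{theo2}: it is stated for increasing functions, so applying it to the signed functions $F=If$ requires first decomposing $F = F_1 - F_2$ via Lemma \ref{dif_increasing} and checking bilinearity of all three expressions in \eqref{eq:saumaaaa} extends the identity to $H^1_c(p,w)$; a secondary technical point is confirming that $\Li f$ genuinely lands in $E^2(p,w)$ rather than merely $L^1_{\mathrm{loc}}$, which is where hypotheses (H1)-(H2) and Proposition \ref{ID} (giving $E^2(p,w)=L^2(pw)$) do the real work.
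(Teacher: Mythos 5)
Your treatment of self-adjointness, positivity and the norm identity is correct and essentially coincides with the paper's argument: the Hoeffding identity (Theorem \ref{theo2}) gives $\E_p[g\,\Li f\,w]=\E_p[If\,Ig]=\Cov_p[If,Ig]$, whence symmetry and positivity of the quadratic form, and the variational characterization of the norm of a bounded, positive, self-adjoint operator turns $\sup_f \Var_p[If]/\|f\|^2_{L^2(pw)}$ into $C(p,w)$ via the bijection $I$, exactly as in the paper (the mean-zero restriction being harmless, as you note).

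The genuine gap is in your first step: you never actually prove that $\Li$ is well defined and continuous, i.e.\ that $-\frac1w\Ti(If)$ lies in $L^2(pw)$ with a controlled norm. Your proposed route --- bounding the composition by $\|-\Ti\|_{H^1_c(p,w)\to L^2(pw)}\,\|I\|$ --- is circular, since boundedness of $h\mapsto -\Ti h/w$ from $H^1_c(p,w)$ into $L^2(pw)$ is precisely what is at issue, and the remark that $\Li$ is an algebraic (pseudo-)inverse of $\LS$ yields no norm estimate by itself. Your later quadratic-form computation cannot repair this, because writing $\langle \Li f,g\rangle_{L^2(pw)}$ already presupposes $\Li f\in L^2(pw)$. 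The paper closes this gap by duality: for fixed $f$, the linear form $h\mapsto \E_p[h\,\Li f\,w]=\E_p[Ih\,If]$ is bounded by $C(p,w)\|f\|_{L^2(pw)}\|h\|_{L^2(pw)}$ (Cauchy--Schwarz plus the Poincar\'e inequality), so by Riesz representation in the Hilbert space $E^2(p,w)$ (this is where (H1)-(H2) and Proposition \ref{ID} really enter) there is $g\in L^2(pw)$ representing it; one then identifies $g=\Li f$ a.e.\ by testing against indicator functions of compact sets, which belong to $E^2(p,w)$ because $pw\in L^1_{\text{loc}}(]a,b[)$. Continuity then follows either by taking $h=\Li f$ in the same identity or from the Hellinger--Toeplitz theorem. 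With that step inserted, the rest of your argument goes through as written; incidentally, your appeal to Lemma \ref{dif_increasing} to extend Theorem \ref{theo2} to signed functions is unnecessary, since that theorem is already stated for functions in $H^1(p,w)$ when $C(p,w)<\infty$, though it does no harm.
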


\begin{proposition}
\label{prop:eigen}
Assume that (H1)-(H2) hold. If $e\in E^2(p,w)$ is an eigenvector of $\Li$, its eigenvalue is $C(p,w)$ if and only if $e>0$ a.e. Further, if such a eigenvector exists, it is unique and $Ie$ saturates $\PIpw$.
\end{proposition}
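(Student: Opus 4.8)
The plan is to leverage two positivity properties of $\Li$ coming from its kernel. Since $K\ge 0$, the operator $\Li$ is positivity-preserving: $f\ge 0$ a.e.\ implies $\Li f\ge 0$ a.e. Moreover the standing assumption $p>0$ a.e.\ forces $0<P(t),\bar P(t)<1$, hence $K(x,y)>0$, for all $x,y\in ]a,b[$, so $\Li$ is in fact positivity-improving: if $f\ge 0$ a.e.\ with $f\neq 0$, then $\Li f>0$ a.e. I would begin by recording these facts together with the elementary remarks that $C(p,w)=\|\Li\|>0$ (Proposition \ref{Li continuous}, positivity of $C(p,w)$ being clear since it is a Poincaré constant) and that any eigenvalue $\lambda$ of $\Li$ with eigenvector $e$ satisfies $0\le\lambda=\langle e,\Li e\rangle_{L^2(pw)}/\|e\|_{L^2(pw)}^2\le C(p,w)$, with $\lambda>0$ as soon as $e\ge 0$, $e\neq 0$ (because then $\langle e,\Li e\rangle_{L^2(pw)}>0$ by positivity-improving).

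For the implication that an eigenvector $e$ with eigenvalue $C(p,w)$ cannot change sign, I would split $e=e_+-e_-$ with $e_\pm\ge 0$; these lie in $L^2(pw)=E^2(p,w)$ by (H2) and Proposition \ref{ID}, so $\Li e_\pm$ make sense. Expanding and using self-adjointness of $\Li$,
\begin{equation*}
\langle|e|,\Li|e|\rangle_{L^2(pw)}-\langle e,\Li e\rangle_{L^2(pw)}=4\,\langle e_+,\Li e_-\rangle_{L^2(pw)}\ge 0,
\end{equation*}
the inequality because $e_+\ge 0$ and $\Li e_-\ge 0$. Since $\langle e,\Li e\rangle_{L^2(pw)}=C(p,w)\|e\|_{L^2(pw)}^2=C(p,w)\||e|\|_{L^2(pw)}^2\ge\langle|e|,\Li|e|\rangle_{L^2(pw)}$, all of these are equalities; hence $\langle e_+,\Li e_-\rangle_{L^2(pw)}=0$, so $e_+\cdot\Li e_-=0$ a.e. If $e_-\neq 0$ then $\Li e_->0$ a.e.\ (positivity-improving), forcing $e_+=0$ a.e.; symmetrically $e_+\neq 0$ forces $e_-=0$ a.e. As $e\neq 0$, one of these holds, so $e$ keeps a constant sign, and up to replacing $e$ by $-e$ we may take $e\ge 0$ a.e. Then $\Li e>0$ a.e.\ once more, and $e=\Li e/C(p,w)>0$ a.e.

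For the converse, and for the saturation statement, assume $e\in E^2(p,w)$ with $e>0$ a.e.\ and $\Li e=\lambda e$ (so $\lambda>0$). Put $h=Ie$; by Proposition \ref{ID}, $h\in H^1_c(p,w)\subset L^2(p)\subset L^1(p)$ with $h'=e>0$ a.e., so $h$ is strictly increasing. Using \eqref{caracL} together with the kernel representation $\Li e(x)=\frac{1}{p(x)w(x)}\int_a^b K(x,y)e(y)\,dy$, one gets $-\Ti h=w\,\Li e=\lambda\,w\,h'$. Applying Theorem \ref{theo1} to $\tilde h=-h$, which is strictly decreasing and in $L^2(p)$, the associated weight is $w_{\tilde h}=-\Ti\tilde h/\tilde h'=(-\Ti h)/h'=\lambda w$, so $C(p,\lambda w)\le 1$; since rescaling the weight gives $C(p,\lambda w)=C(p,w)/\lambda$, we get $C(p,w)\le\lambda$, whence $\lambda=C(p,w)$. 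Finally, Theorem \ref{theo2} (applicable since $h$ is increasing and in $L^1(p)$) yields $\Var_p[h]=\E_p[h'(-\Ti h)]=\lambda\,\E_p[|h'|^2w]=C(p,w)\,\E_p[|h'|^2w]$, i.e.\ $h=Ie$ saturates $\PIpw$.

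Uniqueness follows by a standard orthogonality trick: if $e_1,e_2$ are eigenvectors with eigenvalue $C(p,w)$, then by the first part each may be taken strictly positive a.e.; setting $e=e_1-c\,e_2$ with $c=\langle e_1,e_2\rangle_{L^2(pw)}/\|e_2\|_{L^2(pw)}^2$, this $e$ is again in the $C(p,w)$-eigenspace and satisfies $\langle e,e_2\rangle_{L^2(pw)}=0$. If $e\neq 0$ it would have constant sign by the first part, contradicting $\langle e,e_2\rangle_{L^2(pw)}=0$ with $e_2>0$ a.e.; hence $e=0$ and $e_1$ is a multiple of $e_2$. The step I expect to be the real obstacle is the reverse inequality $\lambda\ge C(p,w)$ in the third paragraph: since $\Li$ is not assumed compact here, one cannot simply invoke attainment of $\|\Li\|$, so the argument genuinely uses the Stein/Papathanasiou input of Theorem \ref{theo1} through the identity $-\Ti h=\lambda w h'$ rather than soft spectral theory; the surrounding bookkeeping of function spaces (that $e_\pm\in E^2(p,w)$, $Ie\in H^1_c(p,w)$, and $-\Ti h=w\Li e$) is where (H1)--(H2) and Propositions \ref{ID}--\ref{Li continuous} enter.
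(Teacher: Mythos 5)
Your proposal is correct, and in the central direction it takes a genuinely different route from the paper. To show that an eigenvector with eigenvalue $C(p,w)$ cannot change sign, the paper first uses the Hoeffding identity (the computation labelled \eqref{klop1} in the appendix) to see that $Ie$ saturates $\PIpw$, then invokes Lemma \ref{lem increasing} to conclude that a saturating function must be monotone, hence $e\ge 0$, and finally uses strict positivity of the kernel to upgrade this to $e>0$ a.e. You instead run a Perron--Frobenius-type argument at the operator level: comparing the quadratic forms of $e$ and $|e|$, using $\|\Li\|=C(p,w)$ from Proposition \ref{Li continuous} together with the positivity-improving kernel to force $\langle e_+,\Li e_-\rangle_{L^2(pw)}=0$ and hence a constant sign; this bypasses Lemma \ref{lem increasing} entirely and uses only boundedness, self-adjointness and kernel positivity, which is arguably more robust (it would survive in settings where the monotone-rearrangement lemma is unavailable), at the price of not exhibiting the saturation property along the way. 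For the converse and the saturation you rely on the same Stein-type inputs as the paper, just packaged differently: Theorem \ref{theo1} applied to $-Ie$ via the identity $-\Ti Ie=w\Li e=\lambda w e$, combined with $\lambda\le\|\Li\|=C(p,w)$, where the paper simply feeds the constant ratio into both bounds of Theorem \ref{lem1}; and Theorem \ref{theo2} for $\Var_p[Ie]=C(p,w)\E_p[e^2w]$. Your uniqueness step (orthogonalize $e_1$ against $e_2$ within the eigenspace, then note a nonzero element would have strict sign, contradicting orthogonality against $e_2>0$) is a clean variant of the paper's equal-norm sign argument and in fact yields one-dimensionality of the eigenspace directly. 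Both your proof and the paper's share the same harmless implicit sign normalization (strictly, $-e$ is also an eigenvector for $C(p,w)$), so that is not a gap attributable to you.
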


Another characteristic that may be interesting is compactness, because compact self-adjoint operators have strong spectral properties.
\begin{proposition}
\label{AAA}
Assume that (H1)-(H2) hold. The operator $\Li$ is compact if and only if
\begin{itemize}
	\item[(A1)]the eigenvalues $\left\{\kappa_i:i\in \N_0\right\}$ of $\Li $ verify $\kappa_i>0$, $\kappa_{i+1}\leq \kappa_i$ for all $i\in\N_0$ and $\lim_{i\rightarrow \infty}\kappa_i=0$,
	\item[(A2)]there exists a countable Hilbert basis $\left\{e_i:i\in \N_0\right\}$ of $E^2(p,w)$ made up of eigenvectors of $\Li $.
\end{itemize}
Moreover, if $\Li$ is compact, we have $\kappa_1=C(p,w)$.

\end{proposition}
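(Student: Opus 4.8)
The plan is to deduce the proposition from the spectral theorem for compact self-adjoint operators, using the structural information already collected in Proposition~\ref{Li continuous} (that $\Li$ is continuous, self-adjoint, positive, with $\|\Li\|=C(p,w)$) together with two elementary observations. The first is that $\Li$ is injective: under (H1)--(H2) one has $\Li f=-\Ti I f$, so $\Li f=0$ gives $\T\Ti I f=0$, i.e.\ $If-\E_p[If]=0$ by the identity $(\T\circ\Ti)h=h-\E_p[h]$; since $If\in H^1_c(p,w)$ has mean zero this forces $If=0$, and then $f=D(If)=0$ because $I=D^{-1}$ by Proposition~\ref{ID}. The second is that $E^2(p,w)=L^2(pw)$ (Proposition~\ref{ID}) is an infinite-dimensional separable Hilbert space: separability is recalled in Section~\ref{sec:kern-covar-ineq}, and infinite-dimensionality holds because $pw\,\mathrm{d}x$ is mutually absolutely continuous with Lebesgue measure on the non-degenerate interval $]a,b[$ and so carries infinitely many pairwise disjoint sets of positive measure.

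For the direct implication, assume $\Li$ is compact. The spectral theorem for compact self-adjoint operators on a separable Hilbert space yields an orthonormal basis of eigenvectors and exhibits the nonzero eigenvalues, each of finite multiplicity, as a finite list or a null sequence once arranged in non-increasing order of modulus and repeated according to multiplicity. Positivity makes all eigenvalues non-negative, the injectivity noted above excludes the eigenvalue $0$, and, $\Li$ being injective on an infinite-dimensional space, the eigenvalues must be infinite in number; hence they form a strictly positive non-increasing sequence $\kappa_1\ge\kappa_2\ge\cdots$ tending to $0$, which is exactly (A1), the associated eigenbasis furnishing (A2). For the last assertion I would use that for a positive self-adjoint operator $\|\Li\|=\sup_{\|f\|=1}\langle \Li f,f\rangle$; expanding $f$ in the eigenbasis shows this supremum equals $\sup_i\kappa_i=\kappa_1$, so $\kappa_1=\|\Li\|=C(p,w)$ by Proposition~\ref{Li continuous}.

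For the converse, assume (A1)--(A2). Take the Hilbert basis $\{e_i\}_{i\in\N}$ of eigenvectors with $\Li e_i=\kappa_i e_i$, the indices chosen so that the $\kappa_i$ are the non-increasing sequence of (A1), counted with multiplicity, and present $\Li$ as a norm-limit of finite-rank operators. By continuity of $\Li$, for every $f\in E^2(p,w)$ we have $\Li f=\sum_{i\in\N}\kappa_i\langle f,e_i\rangle_{E^2(p,w)}e_i$; setting $\Li_N f=\sum_{i=1}^{N}\kappa_i\langle f,e_i\rangle_{E^2(p,w)}e_i$ gives $\|(\Li-\Li_N)f\|^2=\sum_{i>N}\kappa_i^2|\langle f,e_i\rangle|^2\le\kappa_{N+1}^2\|f\|^2$, whence $\|\Li-\Li_N\|\le\kappa_{N+1}\to 0$. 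Therefore $\Li$ is compact, being a uniform limit of finite-rank operators.

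I do not expect any genuine difficulty: the statement is a repackaging of standard compact-operator spectral theory once Proposition~\ref{Li continuous} is in hand. The only points needing care are bookkeeping ones — reading the family $\{\kappa_i:i\in\N_0\}$ in (A1) as the list of eigenvalues repeated according to multiplicity (so that $\kappa_i\to 0$ simultaneously forces finite multiplicity of each eigenvalue and provides the bound on $\|\Li-\Li_N\|$ in the converse), and confirming that $E^2(p,w)$ is infinite-dimensional so that (A1) concerns a genuinely infinite sequence rather than being vacuous.
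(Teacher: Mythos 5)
Your proof is correct and takes essentially the same route as the paper: the paper simply cites the Hilbert--Schmidt spectral theorem for compact self-adjoint operators (Reed--Simon VI.15--16) for the forward direction and Conway's characterisation (norm limit of finite-rank truncations, norm equal to the largest eigenvalue) for the converse and for $\kappa_1=C(p,w)$, which are exactly the standard arguments you write out in full. The only addition on your side is the explicit verification that $\Li$ is injective (so that $0$ is not an eigenvalue and (A1) holds with $\kappa_i>0$) together with the infinite-dimensionality remark --- points the paper's citation-style proof leaves implicit but which are indeed needed for (A1) as stated.
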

In the sequel, we won't use explicitly the compactness of $\Li$, but we will refer repeatedly to (A1)-(A2). Our convergence results rely on these properties. 
\begin{example}
Consider the uniform measure on $[0,1]$. From the proof of forthcoming Example \ref{exemple uniforme}, we know that the eigenvectors of $\Li$ are $e_i(x)=\sin((2i-1)\pi x)$ and the eigenvalues are $\kappa_i={1}/({(2i-1)^2\pi^2})$.
\end{example}
We can also show that the converse of Proposition \ref{prop:eigen} holds if $\Li$ is compact.
\begin{proposition} \label{prop:compconditiona}
Assume that (H1)-(H2) hold. If $\Li$ is compact, $h\in H^1_c(p,w)$ saturates $\mathrm{PI}(p,w)$ if and only if $h'$ is an eigenvector of $\Li $ associated to the eigenvalue $\kappa_1=C(p,w)$. 
\end{proposition}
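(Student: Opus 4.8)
The plan is to translate ``$h$ saturates $\mathrm{PI}(p,w)$'' into a Rayleigh-quotient extremality property for $\Li$ and then to read the conclusion off the spectral decomposition provided by compactness via Proposition \ref{AAA}. The crucial preliminary is the following dictionary. Fix $h\in H^1_c(p,w)$. By (H1)-(H2) and Proposition \ref{ID} we have $h'\in L^2(pw)=E^2(p,w)$, with $h'=Dh$ and $I=D^{-1}$, and trivially $\E_p[|h'|^2w]=\|h'\|_{L^2(pw)}^2$. Applying Theorem \ref{theo2} with $g=h$ (legitimate since $C(p,w)<\infty$) and then unwinding the definitions of $k$ and of $\Li$, I would obtain
\[
\Var_p[h]=\E\left[h'(X)\,k(X,Y)\,h'(Y)\right]=\langle h',\Li h'\rangle_{L^2(pw)}.
\]
Hence $h$ saturates $\mathrm{PI}(p,w)$ if and only if $\langle\Li h',h'\rangle_{L^2(pw)}=C(p,w)\,\|h'\|_{L^2(pw)}^2$; here one may harmlessly assume $h$ non-constant, i.e.\ $h'\neq0$, the zero function realising equality in \eqref{PI} trivially but not counting as a saturating function.

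Then I would bring in compactness. By Proposition \ref{AAA} there is an orthonormal Hilbert basis $\{e_i:i\in\N_0\}$ of $L^2(pw)$ consisting of eigenvectors of $\Li$, with eigenvalues $\kappa_i>0$, $\kappa_{i+1}\le\kappa_i$, $\kappa_i\to0$ and $\kappa_1=C(p,w)$. Expanding $h'=\sum_i c_ie_i$ and using continuity of $\Li$ together with Parseval's identity yields $\langle\Li h',h'\rangle_{L^2(pw)}=\sum_i\kappa_ic_i^2$ and $\|h'\|_{L^2(pw)}^2=\sum_ic_i^2$, so the condition from the previous paragraph becomes $\sum_i(\kappa_1-\kappa_i)c_i^2=0$. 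Every summand being non-negative, this holds if and only if $c_i=0$ whenever $\kappa_i<\kappa_1$, that is, if and only if $h'\in\ker(\Li-\kappa_1\,\mathrm{id})$; since $h'\neq0$ this says exactly that $h'$ is an eigenvector of $\Li$ with eigenvalue $\kappa_1=C(p,w)$. This proves the implication ``saturates $\Rightarrow$ eigenvector'', and the converse is the trivial direction of the same chain of equivalences (alternatively it is contained in Proposition \ref{prop:eigen}, on noting $I(h')=h$).

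The only genuinely non-formal input is the dictionary of the first step, where two points need care: Theorem \ref{theo2} must be used in its stated extension to an arbitrary, not necessarily monotone, $h\in H^1(p,w)$, and the weight $w$ appearing inside the $L^2(pw)$ inner product has to be tracked carefully when rewriting $\E[h'(X)k(X,Y)h'(Y)]$ as $\langle h',\Li h'\rangle_{L^2(pw)}$. The second step is then the textbook fact that the maximisers of the Rayleigh quotient of a compact, positive, self-adjoint operator are precisely the eigenvectors for its largest eigenvalue; compactness is genuinely needed, as the failure of the statement for the non-compact exponential distribution on $\R^+$ illustrates.
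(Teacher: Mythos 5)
Your proof is correct and follows essentially the same route as the paper: both translate saturation into the identity $\Var_p[h]=\langle h',\Li h'\rangle_{L^2(pw)}$ (the paper packages this as $\|Ih'\|^2_{L^2(p)}=\sum_i b_i^2\kappa_i$ via its preliminary remark), expand $h'$ in the eigenbasis furnished by compactness, and conclude from $\sum_i(\kappa_1-\kappa_i)b_i^2=0$ that only the top eigenspace contributes, with the converse supplied by Proposition \ref{prop:eigen}. The only cosmetic difference is that the paper additionally invokes simplicity of $\kappa_1$ to write $h'=b_1e_1$, which your statement of the conclusion does not require.
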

It remains to be seen when $\Li$ is compact. By Proposition \ref{Li continuous}, we know that if $C(p,w)<\infty$ or, equivalently, if the injection of $H^1_c(p,w)$ in $L^2(p)$ is continuous, then $\Li$ is continuous. Actually, the same relation holds for compactness. 
\begin{proposition}
\label{C12}
Assume that (H1)-(H2) hold. If $H^1(p,w)$ is dense in $L^2(p)$ and the injection of $H^1(p,w)$ in $L^2(p)$ is compact, then $\Li$ is compact.
\end{proposition}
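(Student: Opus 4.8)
The plan is to exhibit $\Li$ as a Gram-type operator $\Phi^*\Phi$ associated with the injection of $H^1_c(p,w)$ into $L^2(p)$, and then to let compactness propagate through this factorisation. Throughout we work under (H1)-(H2). By Proposition~\ref{ID} the operator $I:E^2(p,w)\to H^1_c(p,w)$ is continuous, and since $C(p,w)<\infty$ the scalar product $(f,h)\mapsto\E_p[f'h'w]$ on $H^1_c(p,w)$ induces a norm equivalent to the restriction of $\|\cdot\|_{H^1(p,w)}$. Let $j:H^1_c(p,w)\hookrightarrow L^2(p)$ denote the canonical injection and put $\Phi=j\circ I:E^2(p,w)\to L^2(p)$, so that $\Phi f=If$ seen as an element of $L^2(p)$.

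The core step is the identity $\Li=\Phi^*\Phi$, where $\Phi^*:L^2(p)\to E^2(p,w)$ is the Hilbert-space adjoint. Fix $f,g\in E^2(p,w)$. Then $If,Ig\in H^1(p,w)$ with $\E_p[If]=\E_p[Ig]=0$ and $(If)'=f$, $(Ig)'=g$, so the Hoeffding-type covariance identity of Theorem~\ref{theo2} (applied with $w_1=w_2=w$, which is legitimate since $C(p,w)<\infty$) gives
\begin{align*}
\langle\Phi^*\Phi f,g\rangle_{E^2(p,w)}&=\langle If,Ig\rangle_{L^2(p)}=\Cov_p[If,Ig]=\E[f(X)\,k(X,Y)\,g(Y)]\\
&=\int_a^b\!\!\int_a^b K(x,y)f(x)g(y)\,\mathrm{d}x\,\mathrm{d}y,
\end{align*}
with $X,Y\sim p$ independent; unfolding the kernel formula for $\Li$ shows that the right-hand side equals $\langle\Li f,g\rangle_{E^2(p,w)}$. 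As $f,g$ are arbitrary, $\Li=\Phi^*\Phi$ (this also re-derives that $\Li$ is positive and self-adjoint, in agreement with Proposition~\ref{Li continuous}).

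It then remains to run a standard compactness argument. The subspace $H^1_c(p,w)$ is closed in $H^1(p,w)$, being the kernel of the continuous linear functional $h\mapsto\E_p[h]$; hence the restriction to $H^1_c(p,w)$ of the injection $H^1(p,w)\hookrightarrow L^2(p)$, which is compact by hypothesis, is again compact, and since compactness is unaffected by replacing the norm of $H^1_c(p,w)$ by the equivalent one $h\mapsto\|h'\|_{L^2(pw)}$, the map $j$ is compact. Consequently $\Phi=j\circ I$ is compact, its adjoint $\Phi^*$ is compact, and $\Li=\Phi^*\Phi$ is a composition of compact operators, hence compact.

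I expect the only genuine obstacle to be the identity $\Li=\Phi^*\Phi$: once the integral operator $\Li$ has been translated into the Gram operator of the analytically natural map $f\mapsto If$, the conclusion is routine functional analysis (restriction of compact operators to closed subspaces, adjoints of compact operators, and compositions). That translation rests entirely on the Hoeffding covariance representation of $\Ti$ provided by Theorem~\ref{theo2}. I also note that this route only uses the compactness of the injection of $H^1(p,w)$ in $L^2(p)$ together with (H1)-(H2); the density of $H^1(p,w)$ in $L^2(p)$ is not needed for it.
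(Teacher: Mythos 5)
Your argument is correct, but it follows a genuinely different route from the paper. You factor $\Li=\Phi^*\Phi$ with $\Phi=j\circ I$, where the key identity $\langle\Li f,g\rangle_{L^2(pw)}=\langle If,Ig\rangle_{L^2(p)}$ is exactly the relation \eqref{sym} already used in the proof of Proposition \ref{Li continuous} (a direct consequence of Theorem \ref{theo2} and Proposition \ref{ID}), and you then let compactness propagate: $H^1_c(p,w)$ is closed in $H^1(p,w)$, the norm $h\mapsto\|h'\|_{L^2(pw)}$ is equivalent to the restricted $H^1(p,w)$ norm under (H1), so $j$ inherits compactness from the hypothesis, whence $\Phi$ and $\Li=\Phi^*\Phi$ are compact. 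The paper instead passes through the weak spectral theory of $-\LS$ developed in Appendix \ref{sec:relll}: it invokes Proposition \ref{eigen} (which rests on the standard eigenvalue theorem for coercive forms with a compact and dense embedding) to produce an eigenbasis $\{v_i\}$ of $-\LS$, transfers it to $\Li$ via Corollary \ref{relation vp} by setting $e_i=v_i'$, verifies properties (A1)--(A2) directly, and concludes compactness from Proposition \ref{AAA}. Your route is shorter and more elementary, needs no spectral machinery, and, as you note, does not use the density of $H^1(p,w)$ in $L^2(p)$ (density is needed in the paper only to apply the spectral theorem behind Proposition \ref{eigen}), so your argument actually establishes the conclusion under weaker hypotheses; what the paper's route buys is the explicit correspondence between the eigenstructures of $\LS$ and $\Li$ along the way, though once compactness is known that information is recovered anyway from Proposition \ref{AAA}.
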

The hypotheses of this proposition have already been studied and we can find in the literature more explicit conditions on $p$ under which they hold (see \cite{Hooton} when $w=1$). They mean that $H^1(p,w)$ shares some properties of the classical Sobolev space $H^1(]a,b[)$. In particular, the second one is the counterpart for $H^1(p,w)$ of the Rellich-Kondrachov Theorem (see Theorem IX.16 in \cite{Brezis}). 
Nevertheless, they are uneasy to verify. As $\Li$ is a kernel operator, there is another sufficient condition to check its compactness.
\begin{proposition}
\label{bound_k}
Assume that (H1)-(H2) hold. If $k_{w}\in L^2(pw\otimes pw)$, then  $\Li$ is compact and
\begin{equation*}
C(p,w)^2<  \sum_{i=1}^{\infty}\kappa_i^2 =\left\|k_w\right\|_{L^2(pw\otimes pw)}^2.
\end{equation*}
\end{proposition}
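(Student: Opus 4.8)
The plan is to identify $\Li$ with the integral operator on $L^2(pw)$ attached to the symmetric kernel $k_w$ and then run the classical Hilbert--Schmidt machinery. Write $\mu$ for the measure $pw\,\mathrm{d}x$ on $]a,b[$; since $pw\in L^1_{\mathrm{loc}}(]a,b[)$ exhibits $]a,b[$ as a countable union of compact intervals of finite $\mu$-mass, $\mu$ is $\sigma$-finite and $L^2(\mu)=L^2(pw)$, which is separable. For $f\in L^2(pw)$,
\begin{equation*}
\Li f(x)=\frac{1}{p(x)w(x)}\int_a^b K(x,y)f(y)\,\mathrm{d}y=\int_a^b k_w(x,y)f(y)\,\mathrm{d}\mu(y),
\end{equation*}
so $\Li$ is the integral operator $T_{k_w}$, and $k_w$ is symmetric since $K(x,y)=K(y,x)$, consistently with $\Li$ being self-adjoint by Proposition \ref{Li continuous}. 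Assuming $k_w\in L^2(pw\otimes pw)=L^2(\mu\otimes\mu)$, the standard theorem for $\sigma$-finite spaces gives that $T_{k_w}$ is Hilbert--Schmidt, hence compact, with $\|\Li\|_{\mathrm{HS}}=\|k_w\|_{L^2(pw\otimes pw)}$; concretely, expanding $\|T_{k_w}\|_{\mathrm{HS}}^2=\sum_{m,n}|\langle T_{k_w}e_n,e_m\rangle|^2$ along a Hilbert basis $(e_n)$ of $L^2(\mu)$ and recognising the coefficients as those of $k_w$ in the product basis $(e_m\otimes e_n)$ of $L^2(\mu\otimes\mu)$ yields the identity of norms.

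Next I would pass to the spectral side. Now that $\Li$ is known to be compact (and is self-adjoint and positive by Proposition \ref{Li continuous}), Proposition \ref{AAA} applies: there is a Hilbert basis of eigenvectors with positive eigenvalues $\kappa_1\geq\kappa_2\geq\cdots\to 0$, counted with multiplicity, and $\kappa_1=C(p,w)$. Computing $\|\Li\|_{\mathrm{HS}}^2=\sum_j\|\Li f_j\|^2$ along the eigenbasis $(f_j)$ itself gives $\sum_i\kappa_i^2$, so $\sum_i\kappa_i^2=\|k_w\|_{L^2(pw\otimes pw)}^2$, which is the claimed equality.

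The remaining and genuinely substantive point is the strict inequality $C(p,w)^2<\sum_i\kappa_i^2$; it suffices to exhibit a second positive eigenvalue, which I would do by showing $\Li$ is injective. If $\Li f=0$, then $\Ti(If)=0$; since $If\in H^1_c(p,w)\subset L^2(p)\subset L^1(p)$ and $\E_p[If]=0$ by definition of $I$, the pseudo-inverse identity $\T\circ\Ti=\mathrm{id}-\E_p[\cdot]$ gives $If=\T\Ti(If)=\T(0)=0$, whence $f=DIf=0$ because $DI=\mathrm{id}$ on $E^2(p,w)$ by Proposition \ref{ID}. Thus $\ker\Li=\{0\}$. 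On the other hand $E^2(p,w)=L^2(pw)$ is infinite-dimensional (the interval $]a,b[$ is nondegenerate and $pw>0$ a.e.), so $\Li$ cannot have finite rank; since a compact self-adjoint operator with only finitely many positive eigenvalues would have infinite-dimensional kernel, $\Li$ has infinitely many positive eigenvalues. Hence $\kappa_2>0$ and $\sum_i\kappa_i^2\geq\kappa_1^2+\kappa_2^2>\kappa_1^2=C(p,w)^2$.

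The hard part is precisely this last strictness: the equality $\sum_i\kappa_i^2=\|k_w\|^2$ is routine bookkeeping with the Hilbert--Schmidt theorem, whereas strictness forces one to produce a second eigenvalue, for which the cleanest route I see is the injectivity of $\Li$ read off from the factorisation $\Li=-\Ti I$ together with $DI=\mathrm{id}$ and $\T\Ti=\mathrm{id}-\E_p[\cdot]$.
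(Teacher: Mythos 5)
Your proof is correct and, for the two main ingredients, follows the same route as the paper: compactness is obtained by recognising $\Li$ as the integral operator on $L^2(pw)$ with symmetric kernel $k_w\in L^2(pw\otimes pw)$ and invoking the Hilbert--Schmidt theorem, and the equality $\sum_i\kappa_i^2=\left\|k_w\right\|^2_{L^2(pw\otimes pw)}$ is the identification of the squared Hilbert--Schmidt norm (equivalently $\mathrm{tr}(\Li^2)$, which is how the paper phrases it) with the sum of squared eigenvalues computed in the eigenbasis furnished by Proposition \ref{AAA}. Where you diverge is the strict inequality: the paper dismisses it as obvious, leaning on property (A1) of Proposition \ref{AAA}, which already asserts that \emph{all} eigenvalues $\kappa_i$ are strictly positive, so that $\sum_i\kappa_i^2\geq\kappa_1^2+\kappa_2^2>\kappa_1^2=C(p,w)^2$. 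You instead prove injectivity of $\Li$ directly from the factorisation $\Li f=-\tfrac{1}{w}\Ti(If)$ together with $\T\circ\Ti=\mathrm{id}-\E_p[\cdot]$ and $DI=\mathrm{id}$ on $E^2(p,w)$, and then use positivity and the infinite-dimensionality of $L^2(pw)$ to rule out finite rank and so extract a second positive eigenvalue. This is a sound argument (note only that it is the fact that $w>0$ a.e.\ which lets you pass from $\Li f=0$ to $\Ti(If)=0$, a step you use implicitly), and it has the merit of being self-contained: the Hilbert--Schmidt theorem as cited by the paper only yields an eigenbasis with eigenvalues tending to $0$, so the strict positivity asserted in (A1) ultimately rests on an injectivity argument of exactly the kind you give; your proposal thus supplies a justification the paper leaves implicit.
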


\section{Statements and proofs of the main results}
\label{sec:main results}
\subsection{Variational bounds on $C(p,w)$}

From here onward,    we use the  notations $\inf$ and $\sup$ to denote the  essential
infimum and supremum over $]a, b[$.  Using Theorem~\ref{theo1} we obtain the following.

\begin{theorem}[Chen-Wang variational formula]
\label{lem1}
Let $h_1\in L^1(p)$ and $h_2\in L^2(p)$ be weakly differentiable
and such that $h_1',h_2'<0$ a.e. Then
\begin{equation*}
\inf-\frac{\Ti h_2}{h_2'w}\leq C(p,w)\leq \sup -\frac{\Ti h_1}{h_1'w}.
\end{equation*}
Furthermore, if $-{\Ti h_2}/{(h_2'w)}$ is constant, $h_2$ saturates $\PIpw$. 
\end{theorem}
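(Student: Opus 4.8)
The plan is to derive both inequalities from the Papathanasiou-type bound of Theorem~\ref{theo1} together with the Hoeffding covariance identity of Theorem~\ref{theo2}, after first recording the sign information that turns the quantities appearing in the statement into genuine positive weights. By \eqref{caracL} we have $-\Ti h_i(x)=\E[k(x,X)h_i'(X)]$; since $k(x,\cdot)>0$ a.e.\ on $]a,b[$ while $h_i'<0$ a.e., it follows that $-\Ti h_i<0$ a.e., and dividing by $h_i'<0$ shows that $w_{h_i}:=-\Ti h_i/h_i'>0$ a.e., hence $-\Ti h_i/(h_i'w)>0$ a.e.

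For the upper bound I would set $M=\sup -\Ti h_1/(h_1'w)=\sup w_{h_1}/w$. If $M=\infty$ there is nothing to prove, so assume $M<\infty$, i.e.\ $w_{h_1}\le Mw$ a.e. Theorem~\ref{theo1} applied to $h_1$ (its hypotheses hold by assumption) gives $\Var_p[g]\le\E_p[w_{h_1}|g'|^2]$ for all $g\in H^1(p,w_{h_1})$; since $w_{h_1}\le Mw$ we have $H^1(p,w)\subset H^1(p,w_{h_1})$ (because $\E_p[|g'|^2w_{h_1}]\le M\E_p[|g'|^2w]<\infty$ for $g\in H^1(p,w)$), and for such $g$ one gets $\Var_p[g]\le\E_p[w_{h_1}|g'|^2]\le M\,\E_p[w|g'|^2]$, whence $C(p,w)\le M$.

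For the lower bound, let $m=\inf -\Ti h_2/(h_2'w)\ge0$; if $m=0$ the inequality is trivial, so assume $m>0$. Multiplying $-\Ti h_2/(h_2'w)\ge m$ first by $h_2'w<0$ and then by $h_2'<0$ (each sign change reversing the inequality) yields $h_2'(-\Ti h_2)\ge m|h_2'|^2w$ a.e. On the other hand, applying Theorem~\ref{theo2} to the increasing function $-h_2\in L^1(p)$ gives $\Var_p[h_2]=\Cov_p[-h_2,-h_2]=\E_p[h_2'(-\Ti h_2)]$ (using linearity of $\Ti$), so integrating the pointwise inequality gives $\Var_p[h_2]\ge m\,\E_p[|h_2'|^2w]$. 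Here $\E_p[|h_2'|^2w]>0$ since $h_2'<0$ a.e., and it must be finite, for otherwise $\Var_p[h_2]=\infty$, contradicting $h_2\in L^2(p)$; hence $h_2\in H^1(p,w)$ and the Poincar\'e inequality gives $\Var_p[h_2]\le C(p,w)\E_p[|h_2'|^2w]$, so dividing by $\E_p[|h_2'|^2w]\in\;]0,\infty[$ yields $m\le C(p,w)$.

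Finally, if $-\Ti h_2/(h_2'w)$ equals a constant $c$ a.e., then applying the upper bound with $h_1:=h_2$ (allowed since $h_2\in L^2(p)\subset L^1(p)$) together with the lower bound forces $C(p,w)=c$; and since $h_2\in L^2(p)$, the equality case of Theorem~\ref{theo1} taken with $g=h_2$ reads $\Var_p[h_2]=\E_p[w_{h_2}|h_2'|^2]$ with $w_{h_2}=-\Ti h_2/h_2'=cw=C(p,w)w$, so $\Var_p[h_2]=C(p,w)\E_p[w|h_2'|^2]$, i.e.\ $h_2$ saturates $\PIpw$. The sign bookkeeping and the inclusion $H^1(p,w)\subset H^1(p,w_{h_1})$ are routine; the one step I expect to need genuine care is the lower bound, specifically checking that $\E_p[|h_2'|^2w]$ is finite and nonzero so the final division is legitimate — this is exactly where the hypothesis $h_2\in L^2(p)$ enters.
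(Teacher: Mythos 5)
Your proof is correct. The upper bound and the saturation statement are handled exactly as in the paper: compare $w_{h_1}=-\Ti h_1/h_1'$ with $Mw$, use the inclusion $H^1(p,w)\subset H^1(p,w_{h_1})$ and Theorem \ref{theo1}, and for saturation identify $w_{h_2}=C(p,w)\,w$ and invoke the equality case of Theorem \ref{theo1} with $g=h_2$. Where you genuinely depart from the paper is the lower bound. The paper argues by contradiction: if $C(p,w)<\inf w_{h_2}/w$, then $(C(p,w)+\epsilon)w\le w_{h_2}$, so $H^1(p,w_{h_2})\subset H^1(p,w)$ and $\Var_p[g]\le \frac{C(p,w)}{C(p,w)+\epsilon}\E_p[|g'|^2w_{h_2}]$, contradicting the statement $C(p,w_{h_2})=1$ of Theorem \ref{theo1}. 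You instead exhibit $h_2$ itself as a test function: Theorem \ref{theo2} applied to the increasing function $-h_2$ gives $\Var_p[h_2]=\E_p[h_2'(-\Ti h_2)]\ge m\,\E_p[|h_2'|^2w]$, and comparing with $\mathrm{PI}(p,w)$ yields $m\le C(p,w)$ after the (legitimate, as you check) division. Your route is more direct and makes explicit both where $h_2\in L^2(p)$ enters (finiteness of $\Var_p[h_2]$, hence $h_2'\in L^2(pw)$ and $h_2\in H^1(p,w)$) and the quantitative fact that the Rayleigh-type ratio of $h_2$ is at least $m$; the paper's route is shorter at this point because it reuses the full strength of the assertion $C(p,w_{h_2})=1$ and never needs to verify integrability of $|h_2'|^2w$ by hand. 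The only implicit step on your side is that the Poincar\'e inequality for $h_2$ is invoked with constant $C(p,w)$, which presumes $C(p,w)<\infty$; since the lower bound is trivial when $C(p,w)=\infty$, this is harmless, but one sentence acknowledging it would close the loop.
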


\begin{proof}[Proof of Theorem \ref{lem1}]
Let $h\in L^1(p)$ be such that $h'<0$ a.e.\ on $]a,b[$ and set $w_{h}:={-\Ti h}/{h'}$. We start with the upper bound. If $\sup w_h/w=\infty$, there is nothing to prove. Assume that $\sup w_h/w<\infty$. As $w_h, w>0$ a.e. we have $\inf w/w_h=(\sup w_h/w)^{-1}>0$. Using this and Theorem \ref{theo1}, we obtain
\begin{equation}
\label{eq:wh}
\E_p\left[|g'|^2w\right]\geq \E_p\left[|g'|^2w_h\right]\inf \frac{w}{w_h}\geq  \Var_p[g]\inf \frac{w}{w_h}
\end{equation}
for all $g\in H^1(p,w)\subset H^1(p,w_h)$. Hence $C(p,w)\leq \sup w_h/w$.

Now, we look at the lower bound. Assume by contradiction that 
$h\in L^2(p)$ and $C(p,w)< \inf {w_{h}}/{w}$. 
  Then there exists some $\epsilon>0$
  such that $C(p,w)+\epsilon \leq {w_{h}}/{w}$ a.e.\ on
  $]a,b[$. Since $w>0$ a.e., we have $(C(p,w)+\epsilon)w \leq w_{h}$
  a.e.\ so that  $H^1(p,w_{h})\subset H^1(p,w)$ and
  $$
  \Var_p[g]\leq C(p,w) \E_p\left[|g'|^2w\right]\leq \frac{C(p,w)}{C(p,w)+\epsilon}\E_p\left[|g'|^2w_{h}\right]
  $$ 
  for all $g\in H^1(p,w_h)$. This is a contradiction since Theorem \ref{theo1} states that $C(p,w_{h})= 1$.
  
Finally, assume that $h\in L^2(p)$ and $w_h/w$ is constant. 
We have $w_h=C(p,w)w$ by the first part of the proof. By Theorem \ref{theo1}, we know that $h$ saturates $\mathrm{PI}(p,w_h)$ and thus $\PIpw$.
\qedhere
\end{proof}

Taking $h=\LS f$ for a smooth function $f$, we obtain a similar result for the Sturm-Liouville operator (see  Appendix \ref{sn:furthproo} for a proof).
\begin{proposition}
\label{our_bound}
Assume that $p, w\in C^2(]a,b[)$. Let
$f_i\in C^{\infty}(]a,b[)$ be such that $-(\LS f_i)'>0$ on $]a,b[$ and
$\T(f_i'w)\in L^i(p)$ for $i=1,2$. We have
\begin{equation*}
\inf\frac{f_2'w-\Phi(f_2'w)}{-(\LS f_2)'w}\leq C(p,w)\leq \sup \frac{f_1'w-\Phi(f_1'w)}{-(\LS f_1)'w}
\end{equation*}
where
$\Phi g(x)={\bar P (x)}/{p(x)}  \lim_{t \to a}g(t)p(t)+ {P
  (x)}/{p(x)} \lim_{t \to b}g(t)p(t)$. 
\end{proposition}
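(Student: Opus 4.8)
The plan is to invoke Theorem~\ref{lem1} with the choice $h_i := \LS f_i$ for $i=1,2$, and then to identify the quantities $-\Ti h_i/(h_i'w)$ appearing there with the ratios in the statement by computing $\Ti\circ\T$ explicitly, keeping track of the boundary terms.

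First I would check admissibility. Since $p,w\in C^2(]a,b[)$ and $f_i\in C^\infty(]a,b[)$, the function $\LS f_i=f_i''w+f_i'(-V'w+w')$ is $C^1$, hence weakly differentiable, and by hypothesis $(\LS f_i)'<0$ on $]a,b[$. Using the identity $\LS f_i=\T(f_i'w)$, the assumption $\T(f_i'w)\in L^i(p)$ says precisely that $h_i\in L^i(p)$, so $h_1\in L^1(p)$ and $h_2\in L^2(p)$. Thus Theorem~\ref{lem1} applies and gives
$$\inf-\frac{\Ti h_2}{h_2'w}\leq C(p,w)\leq \sup-\frac{\Ti h_1}{h_1'w}.$$

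The heart of the argument is to compute $\Ti(\LS f_i)=\Ti(\T g)$ with $g:=f_i'w$. This is where the pseudo-inverse property of $\Ti$ from Section~\ref{sec:kern-covar-ineq} enters, but with a correction, since $(\Ti\circ\T)f=f$ only when $\E_p[(fp)']=0$. To track it, write $(\T g)p=(gp)'$, so that $\int_a^x(\T g)\,p=(gp)(x)-\lim_{t\to a}(gp)(t)$ and $\E_p[\T g]=\lim_{t\to b}(gp)(t)-\lim_{t\to a}(gp)(t)$; both one-sided limits exist and are finite because $\T g\in L^1(p)$ forces $(gp)'\in L^1(]a,b[)$, whence $gp$ is absolutely continuous with finite limits at $a$ and $b$. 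Substituting into $\Ti h(x)=p(x)^{-1}\big(\int_a^x hp-P(x)\E_p[h]\big)$ with $h=\T g$ and collecting the terms proportional to $1-P(x)=\bar P(x)$ yields
$$\Ti(\T g)(x)=g(x)-\frac{\bar P(x)}{p(x)}\lim_{t\to a}g(t)p(t)-\frac{P(x)}{p(x)}\lim_{t\to b}g(t)p(t)=g(x)-\Phi g(x).$$

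Applying this with $g=f_i'w$ gives $\Ti h_i=f_i'w-\Phi(f_i'w)$, hence $-\Ti h_i/(h_i'w)=\big(f_i'w-\Phi(f_i'w)\big)/\big(-(\LS f_i)'w\big)$, which is well-defined a.e.\ because $w>0$ a.e.\ and $-(\LS f_i)'>0$ on $]a,b[$. Plugging these two expressions into the displayed bounds above yields exactly the claimed inequalities. The only genuine subtlety is the existence and finiteness of the endpoint limits $\lim_{t\to a}(f_i'wp)(t)$ and $\lim_{t\to b}(f_i'wp)(t)$; I would justify this, as indicated, from the $L^i(p)$-integrability of $\T(f_i'w)$, and everything else is a direct substitution.
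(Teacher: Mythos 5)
Your proposal is correct and follows essentially the same route as the paper: set $h_i=\LS f_i=\T(f_i'w)$, apply Theorem~\ref{lem1}, and identify $\Ti\T g=g-\Phi g$ by tracking the boundary terms $\lim_{t\to a}(gp)(t)$ and $\lim_{t\to b}(gp)(t)$. Your explicit justification that these limits exist and are finite (via $(gp)'\in L^1(]a,b[)$, so $gp$ is absolutely continuous) is a point the paper passes over with ``we necessarily have,'' so it is a welcome addition rather than a deviation.
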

One obvious benefit of Proposition \ref{our_bound} over Theorem \ref{lem1} is that it's easier to take derivatives  than to integrate. 
If we take $f\in C^{\infty}(]a,b[)$ such that $h:=\LS f\in L^1(p)$ and $h'<0$, we have $f'w-\Phi(f'w)=\Ti h>0$ (see the proof of Proposition \ref{our_bound} for details). 
Thus, we can inverse the upper bound of Proposition \ref{our_bound} to obtain 
\begin{equation}\label{eq:chenwand}
\frac{1}{C(p,w)}\geq \inf \frac{-(\LS f)'w}{f'w-\Phi (f'w)}. 
\end{equation}
If we also assume $f'>0$ on $]a,b[$, we have $\Phi(f'w)\geq 0$. Hence, we can recover the spectral gap inequality \eqref{intro2} from \eqref{eq:chenwand}. 
We conclude with an immediate corollary which will be useful for proving the results given in  the examples.
\begin{corollary}
\label{coro LS}
Assume that $p, w\in C^2(]a,b[)$. If
$f\in C^{\infty}(]a,b[)\cap L^2(p)$ is such that $f'>0$, $-\LS f=\lambda f$ on $]a,b[$ and $\lim_{t \to a}f'(t)w(t)p(t)=0=\lim_{t \to b}f'(t)w(t)p(t)$, then $C(p,w)=\lambda^{-1}$ and $f$ saturates $\PIpw$.

\end{corollary}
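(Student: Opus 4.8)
The plan is to derive the corollary from the Chen--Wang variational formula (Theorem~\ref{lem1}) applied to $h := \LS f = -\lambda f$, together with the Hoeffding identity of Theorem~\ref{theo2}. The crux is to turn the pointwise eigen-equation and the boundary hypotheses into the single global identity $f'w = -\lambda\,\Ti f$ on $]a,b[$, and to check that $\lambda > 0$.

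First I would unwind the eigen-equation. Since $\LS f = \T(f'w) = ((f'w)p)'/p$, the equation $-\LS f = \lambda f$ reads $((f'w)p)' = -\lambda f p$ on $]a,b[$. Integrating over all of $]a,b[$ and using $\lim_{t\to a}(f'wp)(t) = \lim_{t\to b}(f'wp)(t) = 0$ gives $\lambda\,\E_p[f] = 0$; moreover $\lambda \neq 0$, for otherwise $(f'w)p$ would be a strictly positive constant, contradicting the vanishing boundary limits. Hence $\E_p[f] = 0$. Integrating $((f'w)p)' = -\lambda f p$ from $a$ to $x$, using $\lim_{t\to a}(f'wp)(t) = 0$ and $\E_p[f] = 0$, and recalling $\Ti f(x) = p(x)^{-1}\int_a^x (f - \E_p[f])\,p$, I obtain $(f'w)(x) = -\lambda\,\Ti f(x)$ for every $x \in\,]a,b[$. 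Finally, \eqref{caracL} gives $-\Ti f(x) = \E[k(x,X)f'(X)] > 0$ for all $x$, since the kernel $k(x,\cdot)$ is (a.e.)\ positive on $]a,b[$ and $f' > 0$; combined with $(f'w)(x) > 0$ this forces $\lambda > 0$.

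Next I would feed $h := \LS f = -\lambda f$ into Theorem~\ref{lem1}. Since $f \in L^2(p) \subset L^1(p)$, the function $h$ is smooth, $h \in L^1(p)$, and $h' = -\lambda f' < 0$ on $]a,b[$ because $\lambda > 0$, so $h$ is admissible. Using $f'w = -\lambda\,\Ti f$,
\[
-\frac{\Ti h}{h'w} \;=\; -\frac{-\lambda\,\Ti f}{-\lambda f'w} \;=\; -\frac{\Ti f}{f'w} \;=\; \frac1\lambda ,
\]
so the upper bound in Theorem~\ref{lem1} yields $C(p,w) \le \lambda^{-1}$. For the reverse inequality and for saturation, I would invoke the Hoeffding identity of Theorem~\ref{theo2} with $g = h = f$ (legitimate, as $f \in L^1(p)$ is weakly differentiable and increasing): using $-\Ti f = \lambda^{-1}f'w$ it gives
\[
\Var_p[f] \;=\; \E_p[f'(-\Ti f)] \;=\; \lambda^{-1}\,\E_p[|f'|^2 w] \;<\; \infty ,
\]
so $f \in H^1(p,w)$; hence $C(p,w) \ge \Var_p[f]/\E_p[|f'|^2 w] = \lambda^{-1}$, and therefore $C(p,w) = \lambda^{-1}$, the last display being exactly the equality expressing that $f$ saturates $\PIpw$.

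The step I expect to be the main obstacle is the second paragraph: extracting the identity $f'w = -\lambda\,\Ti f$ from the eigen-equation and the boundary conditions --- in particular recognising that those conditions already force $\E_p[f] = 0$ --- and fixing the sign $\lambda > 0$. Once that identity is available, everything else is a direct substitution into Theorem~\ref{lem1} and Theorem~\ref{theo2}. One may alternatively obtain both bounds, and the saturation, from Theorem~\ref{lem1} alone, by taking $h_1 = h_2 = \LS f$ and using its concluding clause together with the invariance of saturation under multiplication by the nonzero scalar $-\lambda^{-1}$.
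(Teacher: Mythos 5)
Your proof is correct, and it runs on the same engine as the paper's. The paper treats Corollary \ref{coro LS} as immediate from Proposition \ref{our_bound}: the boundary hypotheses make $\Phi(f'w)=0$, the eigen-equation makes the ratio $(f'w-\Phi(f'w))/(-(\LS f)'w)$ identically $\lambda^{-1}$ so the two bounds pinch, and saturation comes from the constancy clause of Theorem \ref{lem1} applied to $h=\LS f=-\lambda f$, an affine image of $f$. Your second paragraph, integrating $((f'w)p)'=-\lambda fp$ with the vanishing boundary terms, is precisely the identity $\Ti\T(f'w)=f'w-\Phi(f'w)$ that underlies Proposition \ref{our_bound}, so the upper bound is the same use of Theorem \ref{lem1}. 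The only genuine divergences are (i) you get the lower bound and saturation from the Hoeffding identity of Theorem \ref{theo2} together with the definition of $C(p,w)$, rather than from the lower half of Theorem \ref{lem1}/Proposition \ref{our_bound}; this has the small benefit of making explicit that $\E_p[|f'|^2w]=\lambda\Var_p[f]<\infty$, i.e.\ $f\in H^1(p,w)$, so the Poincar\'e quotient is legitimately formed; and (ii) you verify $\E_p[f]=0$ and $\lambda>0$ (via \eqref{caracL} and $f'>0$), facts the paper leaves implicit although they are needed for the admissibility conditions $h'<0$, resp.\ $-(\LS f)'>0$. Your closing remark that both bounds and saturation can be had from Theorem \ref{lem1} alone with $h_1=h_2=\LS f$ is exactly the paper's intended route.
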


\subsection{Recursive approximation of $C(p,w)$}

In this Subsection, we assume that $C(p,w)<\infty$ and $L^2(pw)\subset L^1_{\text{loc}}(]a,b[)$. 
We now propose, using properties of $\Li$, various methods allowing to estimate
$C(p,w)$. We recall that $\{\kappa_i:i\in\N_0\}$ and $\{e_i:i\in\N_0\}$ are respectively the eigenvalues and the eigenvectors of $\Li$. According to Proposition \ref{AAA}, if $\Li$ is compact, we have $\kappa_1=C(p,w)$. Hence, our goal is to find a way of
extracting the first eigenvalue of $\Li$. 
We begin by constructing a sequence of nested intervals containing $C(p,w)$. 
\begin{theorem}
\label{nested}
Assume that (H1)-(H2) hold. For all $g_0\in E^2(p,w)$ such that $g_0>0$, the sequence of intervals 
\begin{equation*}
I_n=\left[\inf \frac{\Li ^{n+1} g_0}{\Li ^n g_0},\sup \frac{\Li ^{n+1} g_0}{\Li ^n g_0}\right]
\end{equation*}
satisfies $I_{n+1}\subset I_n$ for all $n\in\N$ and $C(p,w)\in \bigcap_{n\in\N}I_n$.
\end{theorem}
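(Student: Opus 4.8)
The plan is to write $g_n := \Li^n g_0$ and prove three things: that each $g_n$ is a.e.\ strictly positive (so that $\alpha_n := \inf(g_{n+1}/g_n)$ and $\beta_n := \sup(g_{n+1}/g_n)$, the endpoints of $I_n$, are well defined with $0\le\alpha_n<\infty$ and $0<\beta_n\le\infty$); that $I_{n+1}\subseteq I_n$; and that $\alpha_n\le C(p,w)\le\beta_n$ for every $n$. For the positivity I would argue by induction on $n$: by Proposition~\ref{ID} and continuity of $\Li$ we have $g_n\in E^2(p,w)=L^2(pw)$, and since the kernel $K(x,y)=P(x\wedge y)\bar P(x\vee y)$ is strictly positive on $]a,b[\times]a,b[$ the formula $\Li g(x)=(p(x)w(x))^{-1}\int_a^b K(x,y)g(y)\,dy$ shows that $\Li$ maps any nonnegative, not a.e.\ vanishing element of $L^2(pw)$ to an a.e.\ strictly positive function; starting from $g_0>0$ this gives $g_n>0$ a.e., hence $0<\|g_n\|_{L^2(pw)}<\infty$, and $g_{n+1}/g_n$ is finite and positive a.e.

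Nestedness follows from the fact that $\Li$ is linear and positivity preserving (again because $K\ge0$): from $\alpha_n g_n\le g_{n+1}$ a.e.\ one gets $\alpha_n g_{n+1}=\Li(\alpha_n g_n)\le \Li g_{n+1}=g_{n+2}$ a.e., so $\alpha_{n+1}\ge\alpha_n$; symmetrically, if $\beta_n<\infty$ then $g_{n+1}\le\beta_n g_n$ yields $g_{n+2}\le\beta_n g_{n+1}$, so $\beta_{n+1}\le\beta_n$, while if $\beta_n=\infty$ there is nothing to prove. The lower bound $\alpha_n\le C(p,w)$ is equally direct: multiplying $\alpha_n g_n\le \Li g_n$ by $g_n\,pw\ge0$ and integrating gives, via Proposition~\ref{Li continuous}, $\alpha_n\|g_n\|^2_{L^2(pw)}\le\langle\Li g_n,g_n\rangle_{L^2(pw)}\le\|\Li\|\,\|g_n\|^2_{L^2(pw)}=C(p,w)\|g_n\|^2_{L^2(pw)}$.

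The upper bound $C(p,w)\le\beta_n$ is the crux. Assume $\beta_n<\infty$. Using $\langle\Li f,f\rangle_{L^2(pw)}=\int_a^b\int_a^b K(x,y)f(x)f(y)\,dx\,dy$ and $K\ge0$, replacing $f$ by $|f|$ reduces matters to showing $\langle\Li f,f\rangle\le\beta_n\|f\|^2_{L^2(pw)}$ for $f\ge0$. For such $f$, write $f=h\,g_n$ with $h:=f/g_n\ge0$; the elementary inequality $2h(x)h(y)\le h(x)^2+h(y)^2$ together with the symmetry of $K$ gives
\[
\langle\Li f,f\rangle=\int_a^b\int_a^b K(x,y)g_n(x)g_n(y)h(x)h(y)\,dx\,dy\ \le\ \int_a^b h(x)^2 g_n(x)\Big(\int_a^b K(x,y)g_n(y)\,dy\Big)dx ,
\]
and since $\int_a^b K(x,y)g_n(y)\,dy=p(x)w(x)\,\Li g_n(x)=p(x)w(x)\,g_{n+1}(x)\le\beta_n\,p(x)w(x)\,g_n(x)$ a.e., the right-hand side is at most $\beta_n\int_a^b h^2 g_n^2\,pw=\beta_n\|f\|^2_{L^2(pw)}$. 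Taking the supremum of $\langle\Li f,f\rangle/\|f\|^2$ over $f\neq0$, which equals $\|\Li\|=C(p,w)$ because $\Li$ is self-adjoint and positive, yields $C(p,w)\le\beta_n$; combined with nestedness this gives $C(p,w)\in\bigcap_n I_n$. I expect the only genuine obstacle to be this last step: recognising that a pointwise ``super-solution'' $g_n$ of $\Li g\le\beta_n g$ controls the operator norm of $\Li$ from above, and carrying it out via the AM--GM symmetrisation above (a standard device for bounding the spectral radius of a positive kernel operator by a sub-invariant function). All Fubini/Tonelli manipulations are harmless since, after the reduction to $f\ge0$, every integrand in sight is nonnegative, and the positivity bookkeeping of the first step guarantees that the ratios and the function $h$ are legitimate.
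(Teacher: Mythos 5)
Your proof is correct, but for the key step -- showing $C(p,w)\in I_n$ -- you take a genuinely different route from the paper. The paper obtains nestedness by essentially the same positivity computation as yours (it writes out the kernel integral for $\Li g_{n+1}/g_{n+1}$ and bounds the ratio $\Li g_n/g_n$ inside the integral, which is your monotonicity argument in explicit form), but for the containment it simply plugs $h=Ig_n$ into the Chen--Wang variational formula (Theorem \ref{lem1}), using the identity $-\Ti(Ig_n)/((Ig_n)'w)=\Li g_n/g_n$; that formula in turn rests on the Stein-type machinery of Theorem \ref{theo1}. You instead argue spectrally: the lower bound $\alpha_n\le C(p,w)$ via the Rayleigh quotient being dominated by the operator norm, and the upper bound $C(p,w)\le\beta_n$ via the AM--GM symmetrisation showing that a positive super-solution of $\Li g\le\beta g$ controls $\sup_f\langle\Li f,f\rangle/\|f\|^2=\|\Li\|$ -- a Perron--Frobenius-type ``sub-invariant function'' bound for positive symmetric kernel operators. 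Your route needs only Proposition \ref{Li continuous} ($\Li$ bounded, self-adjoint, positive, with $\|\Li\|=C(p,w)$) together with the kernel representation, and so it bypasses Theorems \ref{lem1} and \ref{theo1} entirely (though Proposition \ref{Li continuous} is itself proved in the paper via the covariance identity, so the two routes are not logically disjoint). What the paper's proof buys is brevity and an explicit link to the Chen--Wang formula, which is a theme of the article; what yours buys is a self-contained operator-theoretic argument that makes transparent why a pointwise super-eigenfunction bounds the norm, and all your measure-theoretic caveats (a.e.\ strict positivity of $g_n$, reduction to $f\ge 0$, Tonelli with nonnegative integrands, the trivial case $\beta_n=\infty$) are handled correctly.
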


\begin{proof}[Proof of Theorem \ref{nested}]
Set $g_{n}=\Li^n g_0$ and $M_n=\sup  {\Li g_n}/{g_n}$ for $n\in\N$. For a.e. $x\in ]a,b[$, we compute
\begin{align*}
\frac{\Li g_{n+1}}{g_{n+1}}(x)
&=\frac{1}{g_{n+1}(x)}\int_a^b \frac{K(x,y)}{p(x)w(x)}\Li g_{n}(y)\,dy\\
&=\frac{1}{g_{n+1}(x)}\int_a^b \frac{K(x,y)g_{n}(y)}{p(x)w(x)}\frac{\Li g_{n}(y)}{g_{n}(y)}\,dy\\
&\leq \frac{M_n}{g_{n+1}(x)}\int_a^b \frac{K(x,y)g_{n}(y)}{p(x)w(x)}\,dy\\
&=\frac{M_n}{g_{n+1}(x)}\Li g_{n}(x)=M_n.
\end{align*}
With the same reasoning, we can get $\inf   {\Li g_{n}}/{g_{n}}\leq \inf   {\Li g_{n+1}}/{g_{n+1}}$. Hence, we have $I_{n+1}\subset I_n$. Remark that $g_{n+1}=\Li g_{n}>0$ for all $n\in\N$. So, we can choose $h=Ig_n$ in Theorem \ref{lem1}, which entails that $C(p,w)\in I_n$ for all $n\in\N$ since
\begin{equation*}
-\frac{\Ti h}{h'w}=-\frac{\Ti Ig_n}{wg_n}=\frac{\Li g_n}{g_n}.\qedhere
\end{equation*} 
\end{proof}

As mentioned in the Introduction, there is no guarantee that $\bigcap_{n\in\N}I_n=\left\{C(p,w)\right\}$. To obtain convergence results, we need $\Li$ to be compact, which allows us to use the spectral properties studied in Section \ref{sec:preliminaries}. First, we show that
$I\Li ^n g_0$ is a minimising sequence for $\mathrm{PI}(p,w)$,
irrespective of the starting function $g_0\in
E^2(p,w)$ (see Appendix \ref{sn:furthproo} for a proof). 
\begin{proposition}
\label{suite_mini}
Assume that (H1)-(H2) hold and $\Li$ is compact. For all $g_0\in E^2(p,w)$ such that $\E[g_0e_1w]\neq 0$, we have
$$\lim_{n\rightarrow\infty}\frac{\Var_p\left[I\Li^ng_0\right]}{\E_p\left[\left|\Li^ng_0\right|^2\right]}=C(p,w).$$
\end{proposition}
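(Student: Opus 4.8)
The plan is to exploit the spectral decomposition of $\Li$ provided by compactness (properties (A1)--(A2)): write $g_0 = \sum_{i\geq 1} a_i e_i$ in the Hilbert basis of eigenvectors of $\Li$, where $a_i = \E_p[g_0 e_i w]$, so that $\Li^n g_0 = \sum_{i\geq 1} a_i \kappa_i^n e_i$ with $\kappa_1 = C(p,w) \geq \kappa_2 \geq \cdots > 0$. The quantity to control is the Rayleigh-type quotient $\Var_p[I\Li^n g_0]/\E_p[|\Li^n g_0|^2] = \Var_p[I\Li^n g_0]/\|\Li^n g_0\|_{L^2(pw)}^2$, and the idea is to recognize both numerator and denominator in terms of the spectral data. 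For the denominator, $\|\Li^n g_0\|_{L^2(pw)}^2 = \sum_{i\geq 1} a_i^2 \kappa_i^{2n}$ by orthonormality. For the numerator, the key identity is that $\Var_p[If] = \langle f, \Li f\rangle_{L^2(pw)}$ for $f \in E^2(p,w)$; indeed, $I = D^{-1}$ maps into $H^1_c(p,w)$, so $\Var_p[If] = \|If\|_{L^2(p)}^2 = \langle If, If\rangle$, and using $\Li = -\Ti I$ together with the Hoeffding identity (Theorem \ref{theo2}) or directly the adjointness relations established in Section \ref{sec:preliminaries}, one gets $\|If\|_{L^2(p)}^2 = \langle f, \Li f\rangle_{L^2(pw)}$. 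Applying this with $f = \Li^n g_0$ gives $\Var_p[I\Li^n g_0] = \langle \Li^n g_0, \Li^{n+1} g_0\rangle_{L^2(pw)} = \sum_{i\geq 1} a_i^2 \kappa_i^{2n+1}$.

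With both expressions in hand, the quotient becomes
\begin{equation*}
\frac{\Var_p\left[I\Li^n g_0\right]}{\E_p\left[\left|\Li^n g_0\right|^2\right]} = \frac{\sum_{i\geq 1} a_i^2 \kappa_i^{2n+1}}{\sum_{i\geq 1} a_i^2 \kappa_i^{2n}}.
\end{equation*}
Since $a_1 = \E_p[g_0 e_1 w] \neq 0$ by hypothesis and $\kappa_1 = C(p,w) \geq \kappa_i$ for all $i$, dividing numerator and denominator by $\kappa_1^{2n}$ shows the quotient equals $\kappa_1 \cdot \big(a_1^2 + \sum_{i\geq 2} a_i^2 (\kappa_i/\kappa_1)^{2n+1}\big)/\big(a_1^2 + \sum_{i\geq 2} a_i^2 (\kappa_i/\kappa_1)^{2n}\big)$. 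As $n \to \infty$, every term with $\kappa_i < \kappa_1$ vanishes by dominated convergence (the tails are bounded by $\sum_{i\geq 2} a_i^2 < \infty$, and each summand tends to $0$), while any terms with $\kappa_i = \kappa_1$ contribute identically to numerator and denominator. Hence the ratio converges to $\kappa_1 = C(p,w)$.

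The main technical point to verify carefully is the identity $\Var_p[If] = \langle f, \Li f\rangle_{L^2(pw)}$ for all $f \in E^2(p,w)$: one needs $If \in H^1_c(p,w)$ (which holds by Proposition \ref{ID}), the relation $D(If) = f$, and then $\Var_p[If] = \|If\|_{L^2(p)}^2$; expanding $\|If\|_{L^2(p)}^2 = \E_p[(If)^2]$ and using the Hoeffding-type covariance identity \eqref{eq:saumaaaa} with $g = h = If$ (legitimate since $If \in H^1_c(p,w) \subset H^1(p,w)$, after reducing to increasing functions via Lemma \ref{dif_increasing} and bilinearity) yields $\E_p[(If)^2] = \E_p[(If)'(-\Ti (If))] = \E_p[f \cdot (-\Ti I f)] = \langle f, \Li f\rangle_{L^2(pw)}$, where the last step uses that $-\Ti I f = w \Li f$ and the inner product on $E^2(p,w) = L^2(pw)$ carries the weight $w$. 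Alternatively one can invoke self-adjointness and positivity of $\Li$ (Proposition \ref{Li continuous}) together with $\Li = -\Ti I$ directly. The convergence exchange of limit and infinite sum is routine given $\sum a_i^2 < \infty$ and $0 < \kappa_i/\kappa_1 \leq 1$, so I do not expect genuine difficulty there; the only care needed is the degenerate case where $g_0$ happens to have $\kappa_i = \kappa_1$ for several $i$, which is handled transparently by the computation above since those terms factor out cleanly.
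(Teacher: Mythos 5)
Your proposal is correct and follows essentially the same route as the paper: decompose $g_0=\sum_i a_ie_i$ in the eigenbasis, identify the quotient as $\sum_i a_i^2\kappa_i^{2n+1}/\sum_i a_i^2\kappa_i^{2n}$ (the paper gets the numerator from the identity \eqref{klop2}, you re-derive the equivalent relation $\Var_p[If]=\E_p[f\,\Li f\,w]$ via the Hoeffding identity, which is the same mechanism), and pass to the limit using $a_1\neq 0$ and $\kappa_i\le\kappa_1$.
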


Proposition \ref{prop:eigen} provides  an easy way of checking that
$\E_p[g_0e_1w]\neq 0$. Indeed, since we have either $e_1> 0$ on
$]a,b[$ or $e_1< 0$ on $]a,b[$, it suffices that $g_0>0$ to ensure
$\E_p[g_0e_1w]\neq 0$.  Proposition \ref{suite_mini} gives us a
theoretical way of estimating $C(p,w)$ but is difficult to use from a
computational point of view. However, we can establish other results
of convergence to
$C(p,w)$. We recall that $\left\{e_i:i\in\N_0\right\}$ is a Hilbert basis of $E^2(p,w)$ by (A2). Thus, any $g_0\in E^2(p,w)$ can be written as $g_0=\sum_{i=1}^{\infty}a_ie_i$ with $a_i=\E_p[g_0e_iw]$. 
We introduce the auxiliary operator $A:={\Li}/{\kappa_1}$, along with its iterations $A^1=A$, $A^{n+1} = A(A^n)$ for $n \in \N_0$. The following holds. 
\begin{theorem}
\label{convergence1}
Assume that (H1)-(H2) hold and  $\Li$ is compact. Let $g_0=\sum_{i=1}^{\infty}a_ie_i\in E^2(p,w)$. The sequence $(A^n g_0)$ 
satisfies
\begin{equation*}
\left\|A^n g_0-a_1e_1\right\|_{L^2(pw)}\leq\left(\frac{\kappa_2}{C(p,w)}\right)^{n} \left\|g_0-a_1e_1\right\|_{L^2(pw)}.
\end{equation*}
In particular, $g_n$ converges to $a_1e_1$ in $L^2(pw)$. 
\end{theorem}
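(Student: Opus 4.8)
The plan is to exploit the spectral decomposition of the compact, self-adjoint, positive operator $\Li$ guaranteed by Proposition \ref{AAA}, which gives a Hilbert basis $\{e_i : i \in \N_0\}$ of $E^2(p,w)$ with $\Li e_i = \kappa_i e_i$, where $\kappa_1 = C(p,w) \geq \kappa_2 \geq \cdots > 0$ and $\kappa_i \to 0$. Writing $g_0 = \sum_{i=1}^{\infty} a_i e_i$ with $a_i = \E_p[g_0 e_i w]$ (the coefficients in the $L^2(pw)$ inner product), the operator $A = \Li/\kappa_1$ acts diagonally: $A^n g_0 = \sum_{i=1}^{\infty} (\kappa_i/\kappa_1)^n a_i e_i$. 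Since $\kappa_1/\kappa_1 = 1$, the first term is just $a_1 e_1$ for every $n$, so $A^n g_0 - a_1 e_1 = \sum_{i=2}^{\infty} (\kappa_i/\kappa_1)^n a_i e_i$.

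The next step is to bound the $L^2(pw)$ norm of this tail. By Parseval,
\begin{equation*}
\left\| A^n g_0 - a_1 e_1 \right\|_{L^2(pw)}^2 = \sum_{i=2}^{\infty} \left(\frac{\kappa_i}{\kappa_1}\right)^{2n} a_i^2.
\end{equation*}
Because the sequence $(\kappa_i)_{i \geq 2}$ is non-increasing, $\kappa_i/\kappa_1 \leq \kappa_2/\kappa_1$ for all $i \geq 2$, so $(\kappa_i/\kappa_1)^{2n} \leq (\kappa_2/\kappa_1)^{2n}$. Factoring this out,
\begin{equation*}
\left\| A^n g_0 - a_1 e_1 \right\|_{L^2(pw)}^2 \leq \left(\frac{\kappa_2}{\kappa_1}\right)^{2n} \sum_{i=2}^{\infty} a_i^2 = \left(\frac{\kappa_2}{\kappa_1}\right)^{2n} \left\| g_0 - a_1 e_1 \right\|_{L^2(pw)}^2,
\end{equation*}
and taking square roots, together with $\kappa_1 = C(p,w)$, gives exactly the claimed inequality. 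Since $\kappa_2 < \kappa_1 = C(p,w)$ (strict, because $e_1$ is, up to sign, the unique positive eigenvector and eigenvalues are non-increasing; alternatively $\kappa_2 < \kappa_1$ holds whenever $\kappa_2 \ne \kappa_1$, and if $\kappa_2 = \kappa_1$ one adjusts the statement — but as stated in Proposition \ref{prop:eigen} the top eigenvalue is simple), the ratio $\kappa_2/C(p,w) < 1$, so the right-hand side tends to $0$ and $A^n g_0 \to a_1 e_1$ in $L^2(pw)$.

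The only genuine subtlety — and the step I would be most careful about — is justifying that $A$ really acts coefficient-wise on the infinite series, i.e. that $\Li(\sum a_i e_i) = \sum \kappa_i a_i e_i$ with convergence in $L^2(pw)$; this is immediate from continuity of $\Li$ (Proposition \ref{Li continuous}) applied to the partial sums, which converge in $E^2(p,w) = L^2(pw)$. One should also note that all of this takes place in $E^2(p,w)$, which under (H1)-(H2) equals $L^2(pw)$ by Proposition \ref{ID}, so there is no mismatch between the space in which the Hilbert basis lives and the space in which convergence is asserted. Everything else is the standard Parseval/geometric-decay estimate.
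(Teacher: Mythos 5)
Your proof is correct and follows essentially the same route as the paper: expand $g_0$ in the eigenbasis, use continuity of $\Li$ to let $A^n$ act coefficient-wise, then apply orthonormality (Parseval) and factor out $(\kappa_2/\kappa_1)^{2n}$, with simplicity of the top eigenvalue (Proposition \ref{prop:eigen}) giving $\kappa_2/C(p,w)<1$ and hence convergence. No gaps to report.
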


\begin{proof}[Proof of Theorem \ref{convergence1}]
Let $\left\{\mu_i={\kappa_i}/{\kappa_1}:i\in\N_0\right\}$ be the eigenvalues of $A$. 
By (A1) and Proposition \ref{prop:eigen}, we know that $\mu_1=1$, $\mu_i>0$ and $\mu_{i+1}\leq \mu_i<1$ for all $i\geq 2$. 
As $A$ is continuous, we have
\begin{align*}
A^{n}g_0&
=\sum_{i=1}^{\infty}a_i A^n e_i
=\sum_{i=1}^{\infty}\mu_i^n a_i e_i=a_1e_1+\sum_{i=2}^{\infty}\mu_i^na_i e_i.
\end{align*}
So, we can compute using the orthonormality of $\left\{e_i:i\in\N_0\right\}$
\begin{align*}
\left\|A^n g_0-a_1e_1\right\|^2_{L^2(pw)}&=\left\|\sum_{i=2}^{\infty}\mu_i^na_i e_i\right\|^2_{L^2(pw)}\\
&=\sum_{i=2}^{\infty}\mu_i^{2n}a_i^2
\leq \mu_2^{2n}\sum_{i=2}^{\infty}a_i^2\leq\mu_2^{2n} \left\|g_0-a_1e_1\right\|_{L^2(pw)}^2.
\end{align*}
Since $\mu_2={\kappa_2}/{C(p,w)}$, we get the desired inequality.
\end{proof}

\begin{remark}
For any functions $f,g\in L^2(pw)$, the expression
\begin{equation*}
pw(x)\Li f(x)=\int_a^b K(x,\cdot)f\quad\text{and}\quad\frac{\Li f}{\Li g}(x)=\frac{\int_a^b K(x,\cdot)f}{\int_a^b K(x,\cdot)g}
\end{equation*}
makes sense for all $x\in ]a,b[$ such that ${K(x,\cdot)}/{pw}\in L^2(pw)$, despite the fact that $f$, $g$ and $pw$ are not necessarily well defined in $x$. To make the expressions shorter, we will sometimes write $\Li f(x)$ or $e_1(x)$ by abuse of notation, but all our computations make sense if we multiply by $pw(x)$. 
\end{remark}

\begin{theorem}
\label{convergence2}
Assume that (H1)-(H2) hold, $\Li$ is compact and
$k_{w}(x, \cdot) \in L^2(pw)$ for some $x\in ]a,b[$. 
For all $g_0\in E^2(p,w)$ such that
$\E_p[g_0 e_1w]\neq 0$, we have
$$\lim_{n\rightarrow \infty}\left(\frac{\Li ^{n+1}g_0}{\Li ^ng_0}\right)(x)=C(p,w).$$
\end{theorem}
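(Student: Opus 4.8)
The plan is to reduce the claimed pointwise convergence to the $L^2(pw)$-convergence already furnished by Theorem \ref{convergence1}, using the fact that, when $k_w(x,\cdot)\in L^2(pw)$, evaluation of $\Li$ at the point $x$ is just pairing against $k_w(x,\cdot)$ in $L^2(pw)$. Concretely, for every $f\in E^2(p,w)=L^2(pw)$ one has
\[
\Li f(x)=\frac{1}{p(x)w(x)}\int_a^b K(x,y)f(y)\,dy=\int_a^b k_{w}(x,y)f(y)p(y)w(y)\,dy=\langle k_{w}(x,\cdot),f\rangle_{L^2(pw)} .
\]
First I would record that $\Li$ being compact gives $\kappa_1=C(p,w)$ by Proposition \ref{AAA}, so the operator $A=\Li/\kappa_1$ of Theorem \ref{convergence1} equals $\Li/C(p,w)$, and that theorem yields $A^m g_0\to a_1e_1$ in $L^2(pw)$ with $a_1=\E_p[g_0e_1w]\ne 0$ by hypothesis. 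Applying the displayed identity with $f=\Li^{n-1}g_0\in L^2(pw)$ shows that $\Li^{n}g_0(x)$ is a genuine number for every $n\ge 1$.

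Next I would carry out the bookkeeping of the normalising powers of $C(p,w)$. Writing $\Li^{n}g_0=C(p,w)^{n-1}\,\Li(A^{n-1}g_0)$ and using continuity of the inner product together with $A^{n-1}g_0\to a_1e_1$ in $L^2(pw)$, we get
\[
\frac{\Li^{n}g_0(x)}{C(p,w)^{n-1}}=\langle k_{w}(x,\cdot),A^{n-1}g_0\rangle_{L^2(pw)}\ \xrightarrow[n\to\infty]{}\ \langle k_{w}(x,\cdot),a_1e_1\rangle_{L^2(pw)}=a_1\,\Li e_1(x),
\]
and, replacing $n$ by $n+1$, also $\Li^{n+1}g_0(x)/C(p,w)^{n}\to a_1\,\Li e_1(x)$. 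Hence
\[
\frac{\Li^{n+1}g_0(x)}{\Li^{n}g_0(x)}=C(p,w)\cdot\frac{\Li^{n+1}g_0(x)/C(p,w)^{n}}{\Li^{n}g_0(x)/C(p,w)^{n-1}}\ \xrightarrow[n\to\infty]{}\ C(p,w)\cdot\frac{a_1\,\Li e_1(x)}{a_1\,\Li e_1(x)}=C(p,w),
\]
provided $a_1\,\Li e_1(x)\ne 0$; note this nonvanishing also ensures $\Li^{n}g_0(x)\ne 0$ for all large $n$, so the ratio is eventually well defined.

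Finally I would verify $a_1\,\Li e_1(x)\ne 0$. We already have $a_1\ne 0$. For the other factor, $\Li e_1(x)=\langle k_{w}(x,\cdot),e_1\rangle_{L^2(pw)}=\int_a^b k_{w}(x,y)e_1(y)p(y)w(y)\,dy$, where $K(x,y)=P(x\wedge y)\bar P(x\lor y)>0$ for every $y\in\,]a,b[$ since $0<P,\bar P<1$ on the open interval, so $k_{w}(x,\cdot)>0$ a.e.; moreover $e_1>0$ a.e. by Proposition \ref{prop:eigen}, being the eigenvector of $\Li$ with eigenvalue $\kappa_1=C(p,w)$. Thus the integrand is positive a.e. and $\Li e_1(x)>0$, which closes the argument. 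The only genuinely delicate point — the one flagged in the Remark preceding the statement — is making sense of and manipulating the pointwise values $\Li^n g_0(x)$ and $\Li e_1(x)$ even though $g_0$, $e_1$ and $pw$ need not be defined at $x$; this is handled cleanly once everything is phrased through pairings against $k_{w}(x,\cdot)\in L^2(pw)$, which is exactly the hypothesis of the theorem.
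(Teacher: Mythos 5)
Your argument is correct and is essentially the paper's own proof: both reduce the pointwise statement to the $L^2(pw)$-convergence of Theorem \ref{convergence1} by pairing against $k_{w}(x,\cdot)$ (your ``continuity of the inner product'' is exactly the Cauchy--Schwarz step in the paper), and both conclude by noting that the limit $a_1\,\Li e_1(x)$ is nonzero because $K(x,\cdot)>0$ and $e_1>0$ a.e.\ (Proposition \ref{prop:eigen}). The only difference is presentational: the paper normalises $a_1=1$ and writes $g_n(x)\to e_1(x)$ under the abuse of notation of the preceding Remark, whereas you keep everything as honest pairings with $k_{w}(x,\cdot)$.
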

\begin{proof}[Proof of Theorem \ref{convergence2}]
Set $g_n=A^n g_0$ for all $n\in \N$. Assume that $a_1=\E_p[g_0 e_1w]=1$ without loss of generality.  We have
\begin{equation*}
|g_n-e_1|(x)=\left|\frac{\Li g_{n-1}}{\kappa_1}-\frac{\Li e_1}{\kappa_1}\right|(x)\leq\frac{1}{\kappa_1(pw)(x)}\int_a^b K(x,\cdot)|g_{n-1}-e_1|.
\end{equation*}
With the Cauchy-Schwarz inequality and the assumption on $k_{w}$, we obtain
\begin{equation*}
\frac{1}{(pw)(x)}\int_a^b \frac{K(x,\cdot)}{\sqrt{pw}}|g_{n-1}-e_1|\sqrt{pw}\leq \left\|k_{w}(x,\cdot)\right\|_{L^2(pw)}\left\|g_{n-1}-e_1\right\|_{L^2(pw)}.
\end{equation*}
We conclude using Theorem \ref{convergence1} that  $\lim_{n\rightarrow\infty}g_n(x)= e_1(x)$. Now, remark that $e_1(x)=\kappa_1^{-1}\int_a^bk_w(x,\cdot)e_1pw\neq 0$ since $e_1>0$ a.e.\ by Proposition \ref{prop:eigen}. Finally, we just have to observe how 
\begin{equation*}
\frac{1}{C(p,w)}\frac{\Li ^{n+1}g_0}{\Li ^ng_0}(x)=\frac{g_{n+1}}{ g_n}(x)\rightarrow \frac{e_1}{e_1}(x)=1, 
\end{equation*}
as required. 
\end{proof}

We learn from the proof of this proposition that
\begin{equation}
\label{vitesse2}
|A^n g_0-a_1e_1|(x)\leq \frac{\left\|k_{w}(x,\cdot)\right\|_{L^2(pw)}}{C(p,w)}\left\|A^{n-1}g_0-a_1e_1\right\|_{L^2(pw)}.
\end{equation}
If $k_{w}\in L^2(pw\otimes pw)$, the condition $k_{w}(x,\cdot)\in L^2(pw)$ is verified for almost every $x\in ]a,b[$ and \eqref{vitesse2} entails simple convergence of $A^n g_0$ to $a_1e_1$ and of ${\Li ^{n+1}g_0}/{\Li ^ng_0}$ to $C(p,w)$. 
Putting together Theorem \ref{convergence1} and  \eqref{vitesse2} provides an idea of the convergence rate, as follows (see Appendix \ref{sn:furthproo} for a proof).

\begin{proposition}
\label{rate3}
Assume that (H1)-(H2) hold, $\Li$ is compact and $k_{w}(x,\cdot)\in L^2(pw)$ for a certain $x\in ]a,b[$. For all $g_0=\sum_{i=1}^{\infty}a_i e_i\in E^2(p,w)$ with $\E_p[g_0 e_1w]\neq 0$, we have 
\begin{equation*}
\left|\frac{\Li ^{n+1} g_0}{\Li ^n g_0}(x)-C(p,w)\right|\leq 2\left(\frac{|a_1e_1(x)|}{B_n(x)}-\frac{1}{C(p,w)}\right)^{-1}
\end{equation*}
for all $n\in\N$ such that $B_n(x)< C(p,w)|a_1e_1(x)|$, where 
\begin{equation*}
B_n(x):=\left\|k_w(x,\cdot)\right\|_{L^2(pw)}\left\|g_0-a_1e_1\right\|_{L^2(pw)}\left(\frac{\kappa_2}{C(p,w)}\right)^{n-1}.
\end{equation*}
\end{proposition}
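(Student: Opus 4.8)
The plan is to feed the geometric $L^2$-rate from Theorem \ref{convergence1} into the pointwise bound \eqref{vitesse2}, and then to convert the resulting control of $A^ng_0(x)$ near $a_1e_1(x)$ into control of the ratio $\Li^{n+1}g_0/\Li^ng_0$. Write $g_n=A^ng_0$; as usual the pointwise values $g_n(x)$ and $e_1(x)$ are understood in the sense of the Remark following Theorem \ref{convergence1}, which is legitimate since $k_{w}(x,\cdot)\in L^2(pw)$. The starting observation is that $\Li^{n+1}g_0/\Li^ng_0(x)=C(p,w)\,g_{n+1}(x)/g_n(x)$, so it is enough to estimate $|g_{n+1}(x)/g_n(x)-1|$.

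First I would record the key pointwise estimate. Combining \eqref{vitesse2} with Theorem \ref{convergence1} applied at index $n-1$ gives $|g_n(x)-a_1e_1(x)|\leq B_n(x)/C(p,w)$, and the same combination with $n$ replaced by $n+1$ gives $|g_{n+1}(x)-a_1e_1(x)|\leq B_{n+1}(x)/C(p,w)$. Since $\kappa_1=C(p,w)$ and $\kappa_2\leq\kappa_1$ by (A1), one has $B_{n+1}(x)=B_n(x)\,\kappa_2/C(p,w)\leq B_n(x)$, so the triangle inequality yields $|g_{n+1}(x)-g_n(x)|\leq 2B_n(x)/C(p,w)$. For the denominator, the triangle inequality also gives $|g_n(x)|\geq |a_1e_1(x)|-B_n(x)/C(p,w)$, which is strictly positive exactly under the standing hypothesis $B_n(x)<C(p,w)|a_1e_1(x)|$; here $e_1(x)\neq 0$ because $e_1>0$ a.e., so that $\kappa_1e_1(x)=\int_a^bk_{w}(x,\cdot)e_1\,pw>0$, as in the proof of Theorem \ref{convergence2}.

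Finally I would combine the two bounds:
\[
\left|\frac{\Li^{n+1}g_0}{\Li^ng_0}(x)-C(p,w)\right|=C(p,w)\,\frac{|g_{n+1}(x)-g_n(x)|}{|g_n(x)|}\leq\frac{2B_n(x)}{|a_1e_1(x)|-B_n(x)/C(p,w)},
\]
and dividing numerator and denominator by $B_n(x)$ rewrites the right-hand side as $2\big(|a_1e_1(x)|/B_n(x)-1/C(p,w)\big)^{-1}$, which is the desired inequality.

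The argument is essentially bookkeeping, so I do not expect a serious obstacle; the only points requiring a little care are the pointwise interpretation of $g_n(x)$ and $e_1(x)$ (handled via the Remark after Theorem \ref{convergence1} and the computation in the proof of Theorem \ref{convergence2}) and the monotonicity $B_{n+1}(x)\leq B_n(x)$, which is precisely where assumption (A1) enters.
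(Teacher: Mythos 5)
Your proposal is correct and follows essentially the same route as the paper's proof: combine Theorem \ref{convergence1} with \eqref{vitesse2} to get $|g_n(x)-a_1e_1(x)|\leq B_n(x)/C(p,w)$, bound $|g_n(x)|$ from below by $|a_1e_1(x)|-B_n(x)/C(p,w)$, and assemble the ratio estimate. Your explicit remark that $B_{n+1}(x)\leq B_n(x)$ (via $\kappa_2\leq\kappa_1$) just makes transparent a step the paper uses implicitly, so there is nothing to add.
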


Of course, we don't know $C(p,w)$, $\kappa_2$ and $a_1e_1(x)$, so this
convergence rate can't be used in practice. At least, it emphasises
how the speed of convergence depends on the parameter. 
It tells us also that the convergence of ${\Li ^{n+1} g_0}/{\Li ^n g_0}$ to $C(p,w)$ is uniform when $k_{w}\in L^2(pw\otimes pw)$ and $e_1(x)\geq \alpha \left\|k_w(x,\cdot)\right\|_{L^2(pw)}$ for some $\alpha>0$ and almost every $x\in ]a,b[$. In such cases, the sequence of nested intervals $I_n$ from Theorem \ref{nested} converges to $\left\{C(p,w)\right\}$.

\section{Examples}
\label{sec:examples}

In this Section we treat several illustrative examples, some of which serve to connect with the literature (Examples \ref{ex:weightgau}, \ref{exemple uniforme}, and \ref{ex:bonnefj}) the  others containing  new results. All proofs are provided in  Appendix \ref{sec:profex}, while  numerical and symbolic evaluations are available in the  \texttt{Mathematica} file provided in   \cite{GeSwSup}. 

\begin{example}[Bounds from \eqref{eq:stkb} and \cite{NouVi}]
Let $N$ be a centered Gaussian random vector with covariance matrix $K$ and $f:\R^n\to \R$ be a $C^1$ function. Set $Z=f(N)-\E[f(N)]$. Then, we know from \cite{NouVi} that the corresponding Stein kernel is
\begin{equation}
    \tau(x) = \int_0^{\infty} e^{-t} \E \left[ \sum_{i,j=1}^n K_{ij}\partial_i f(N)\partial_j f(e^{-t}N+\sqrt{1-e^{-2t}}N')|Z=x\right] \mathrm{d} t
\end{equation}
where $N'$ is an independent copy of $N$. Assume furthermore that there exist $\alpha_i, \beta_i \ge 0$ with $\alpha_i \le \frac{\partial{f}}{\partial{x_i}}(x) \le \beta_i$  for all $i \in \{1, \ldots, n\}$ and a.e.\ $x \in \mathbb{R}^n$. Then, $Z$ has a density $p$ which satisfies 
\begin{equation*}
    \sum_{i, j=1}^n K_{ij}\alpha_i \alpha_j  \le C(p, 1) \le \sum_{i, j=1}^n K_{ij}\beta_i \beta_j.
\end{equation*}
The other examples from \cite{NouVi} lead to  similarly flavored  conclusions. 
\end{example}

\begin{example}[Weighted Gaussian bounds from \cite{Joulin}]\label{ex:weightgau}
Consider the Gaussian measure with density $p(x)\propto \mathrm{exp}({-{x^2}/{2}})$ on $\R$ and the weight $w(x)=1/(1+bx^2)$ where $b\geq {1}/{2}$. Theorem~2.2 in \cite{Joulin} informs us  that $C(p,w)=4b$. To get the upper bound on the Poincar\'e constant, \cite{Joulin} uses \eqref{intro2} with the function $f'(x)=(1+bx^2)\mathrm{exp}({{x^2}/{4}})$. This choice of function is allowed in \eqref{eq:chenwand} because $(f'wp)(x)=\mathrm{exp}({-{x^2}/{4}})$  is of bounded variation. The two bounds give the same result since $\Phi(f'w)=0$ on $\R$ in this case. We can thus  also get the result with Theorem~\ref{lem1} using the function $h(x)=\T(f'w)(x)=-{x}\,\mathrm{exp}({{x^2}/{4}})/{2}$, which doesn't depend on $b$. Indeed, we can check that $h\in L^1(p)$, $h$ is decreasing and
\begin{equation*}
-\sup_{\R}\frac{\Ti h}{h'w}=\sup_{x\in\R}2\frac{1+bx^2}{1+\frac{x^2}{2}}=4b.
\end{equation*}
\end{example}

\begin{example}[The uniform case]
\label{exemple uniforme}
Consider the uniform measure on $[0,1]$ and the weight $w=1$. This case is very classical, but it allows us to illustrate our methods. Since $\tau(x) = x(1-x)/2$, bound \eqref{eq:stkb} yields $0 \le C(p,1) \le 0.125$. We can also obtain the exact value (and saturating function). Indeed, taking $g_0=1$ as initial function, we obtain the
following sequence $\Li 1(x)=\frac{1}{2}(x-x^2)$,
$\Li^2 1(x)=\frac{1}{24}(x - 2x^3 + x^4)$, $\Li^3 1(x)=\frac{1}{720} \left(3x - 5x^3 + 3x^5 - x^6\right)$ 
and, more generally
\begin{equation}
  \label{tobeproved}
  \Li^n 1(x)=\frac{(-1)^n}{(2n)!} E_{2n}(x)
\end{equation}
where the $E_{2n}$ are the even-indexed Euler polynomials defined by
\begin{equation}
\label{Eulerpolynomial}
E_{2n}(x)=(-1)^n \frac{4(2n)!}{\pi^{2n+1}}\sum_{k=0}^{\infty}\frac{\sin((2k+1)\pi x)}{(2k+1)^{2n+1}}
\end{equation}
for all $n\geq 1$ (see \cite{web:Euler}). With \eqref{Eulerpolynomial}, we can see that 
$$
\lim_{n\rightarrow\infty}\pi^{2n}\Li^n 1(x)=\frac{4}{\pi}\sin(\pi x)
$$
for all $x\in [0,1]$. 
This is exactly what we expected since $h(x)=\cos(\pi x)$ saturates
$\mathrm{PI}(p,1)$. Using Theorem \ref{nested}, we have after six iterations that
$ C(p,1) \in [0.101319,0.101322]$ while
$C(p,1)={\pi^{-2}}\approx 0.10132$. Figure~\ref{1} illustrates the
first three ratios $({\Li^{n}1}/{\Li^{n-1} 1})(x)$ over $x \in ]0,1[$, $n=1, 2, 3$.
\begin{figure}[]
  \includegraphics[width=6cm]{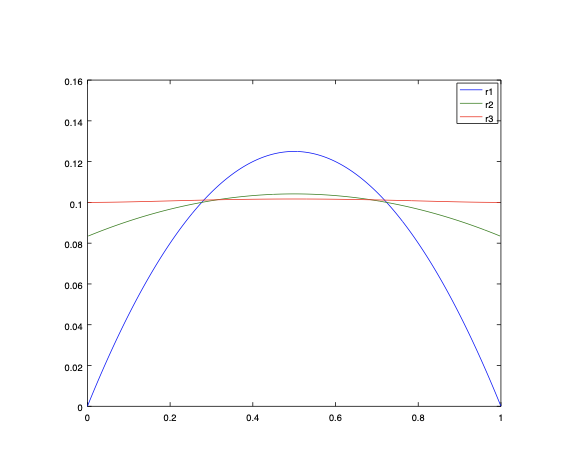}
	\caption{Convergence for the uniform measure on $[0,1]$. rn denotes the ratio ${\Li^{n} 1}/{\Li^{n-1}1}$.}
	\label{1}
\end{figure}
 
\end{example}
 
\begin{example}
\label{beta exact}
Consider the beta distribution with parameters $\alpha>0$ and $\beta>0$, whose density is $p(x) \propto x^{\alpha-1}(1-x)^{\beta-1}$ on $[0,1]$.  
When $\beta=1$, as in the previous example, we can  obtain the exact value (and saturating function) with our results. Define $g(x)=x^{1-\frac{\alpha}{2}}J_{\frac{\alpha}{2}}(2x)$ where $J_{\frac{\alpha}{2}}$ denotes a Bessel function of the first kind. The Poincar\'e constant of $p$ with weight $w = 1$ is  
$$
C(p,1)=\frac{1}{4r_1^2}
$$
where $r_1$ is the smallest positive root of $g$. The saturating function is 
$$
h(x)=x^{1-\frac{\alpha}{2}}J_{\frac{\alpha}{2}-1}(2r_1x).
$$
Moreover, if $r$ is a positive root of g, then ${1}/({4r^2})$ is an eigenvalue of $\Li$ associated with the eigenvector 
$
e(x)=g(rx).
$ 
The case $\alpha = 1, \beta>0$ can be treated similarly. 
We do not have an explicit formula of $C(p,1)$ for other combinations  of $\alpha, \beta$. Nevertheless, since $\tau(x) = x(1-x)/(\alpha + \beta)$, \eqref{eq:stkb} can be applied  yielding
\begin{equation}\label{eq:boubet}
 0 \le C(p, 1) \le \frac{1}{4(\alpha + \beta)}.
\end{equation}
Pushing the arguments to the second order through  Theorem \ref{nested}, some (tedious) computations lead to 
\begin{equation*}
    \frac{\Li^{2} 1}{\Li1}(x) = - \frac{x^2}{3(2+\alpha+\beta)} + \frac{(2+\alpha+3\beta)x}{6(1+\alpha+\beta)(2+\alpha+\beta)} + \frac{(1+\alpha)(2+\alpha+3\beta)}{6(\alpha+\beta)(1+\alpha+\beta)(2+\alpha+\beta)}
\end{equation*}
from which it follows that 
\begin{align}\label{eq:sforderbeta}
    &\min(1+\alpha, 1+\beta) \frac{2+\alpha+3\beta}{6(\alpha+\beta)(1+\alpha+\beta)(2+\alpha+\beta)}  \le C(p, 1) \nonumber \\ & \le  \frac{(2+3\alpha+\beta)(2+\alpha+3\beta)(4+3\alpha+3\beta)}{48(\alpha+\beta)(1+\alpha+\beta)^2(2+\alpha+\beta)}.
\end{align}
We could obviously iterate further. 
We can also use Theorems \ref{nested} and \ref{convergence2} to numerically approximate $C(p,1)$ for specific values of $\alpha$ and $\beta$.  We report some  values  in Table \ref{tab:2} along with the bounds obtained from \eqref{eq:boubet}, \eqref{eq:sforderbeta},  Proposition \ref{bound_k} and the approximation obtained through the \texttt{R} code from \cite{Roustant}. The 7th column reports the approximation obtained after 8 iterations. 
\begin{table}[!]
    \centering
\begin{tabular}{|c|c|c|c|c|c|c|c|}
\hline
$\alpha$ & $\beta$ & $\left\|k\right\|$ & \eqref{eq:boubet} & \eqref{eq:sforderbeta} & $I_4$ & C(p,1)& \cite{Roustant} \\
\hline
2 & 2 & 0.0579 & 0.062 & [0.04166, 0.05555] & [0.05390, 0.054012] & 0.05408 & 0.05408 \\
0.5 & 3 & 0.0557 & 0.071 &[0.03318, 0.05792] & [0.05051, 0.05286] &$0.0528$  & 0.06897 \\
3 & 2 & 0.0471 & 0.050 &[0.03095, 0.04492] & [0.04294, 0.04358]& $0.04341$ & 0.04334 \\
\hline
\end{tabular}
\caption{Numerical data for Example \ref{beta exact}}
\label{tab:2}
\end{table}

\end{example}

\begin{example}
\label{gamma}
Consider the gamma measure with density $p(x)\propto x^{k-1}e^{-{x}/{\theta}}$ for $x\in\R^+$ and $k,\theta\in ]0,\infty[$. Since the Stein kernel of the gamma is linear, bound \eqref{eq:stkb} is not informative. We can  obtain the exact value and saturating functions when $k>1$. In fact, the Poincar\'e constant of $p$ with weight $w=1$ is
$$
C(p,1)=\frac{(k+1)^2}{k}\theta^2
$$
and the saturating function is
$$
h(x)=\left(x-\theta(k+1)\right) \exp\left(\frac{x}{\theta(k+1)}\right).
$$
When $k\in ]0,1]$, integrability issues make the lower bound unusable, so the best we  have is  $C(p,1)\leq {(k+1)^2}\theta^2/{k}$. It can be seen that this upper bound is then worse than the bound obtained through \eqref{eq:roustbobokov} 
 (whereas \eqref{eq:ourroust} yields a trivial upper bound).    
\end{example}

\begin{example}
  Consider the Subbotin measure with density
  $p(x) \propto \mathrm{exp}({-{|x|^{\alpha}}/{\alpha}})$. The Stein kernel is 
  \begin{equation*}  \tau_\alpha(x) = e^{|x|^\alpha/\alpha} \alpha^{2/\alpha - 1} \Gamma(2/\alpha, |x|^\alpha/\alpha)
  \end{equation*}
  ($\Gamma(a, x) = \int_x^{\infty} t^{a-1} e^{-t}dt$ is the incomplete gamma function). 
  One can easily show that (i)  if $\alpha <2$ then  $\tau_\alpha(x)$ is strict concave with minimal value 0 and unbounded from above, (ii) $\tau_2(x) =1 $ (as expected), and (iii) if $\alpha>2$ then $\tau_\alpha(x)$ is  strict convex, with maximal value $\tau_\alpha(0) = \alpha^{2/\alpha-1} \Gamma(2/\alpha)$ and minimal value 0. It follows from \eqref{eq:stkb} that 
$ 0 \le C(p, 1) \le \tau_\alpha(0)$ 
  for all $\alpha>2$. One can see that our upper bound provides a mild improvement over the upper bound from 
  \cite{Joulin} in this case. Combining with the lower bound from that same paper, we deduce that 
\begin{equation}
\label{bound_subbotin}
3^{\frac{2}{\alpha}-1}\leq C(p,1)\leq \alpha^{\frac{2}{\alpha}-1} \Gamma(\frac{2}{\alpha}) 
\end{equation}
for all $\alpha>2$.  
Looking now at specific values of the parameters, if we take  $\alpha=3$ and weight $w=1$ then \eqref{bound_subbotin} yields $0.693\le C(p, 1) \le 0.939$. Proposition \ref{bound_k} enhances the previous upper bound because
$C(p,1)\leq \left\|k\right\|_{L^2(p\otimes p)}\approx 0.89442$. Furthermore, applying Theorem \ref{convergence2} with  $g_0=1$, the first elements of the sequence
$\big( {\Li ^{n+1}1}/{\Li ^n 1}\big)(0)$ are approximately
\begin{equation*}
0.93889,\ 0.82934,\ 0.81074,\, 0.80739,\, 0.80858,\, \ldots
\end{equation*}
leading to $C(p,1)\approx 0.8$, which is consistent with \eqref{bound_subbotin}. Other values of the parameters can be treated similarly.  
\end{example}

\begin{example}
\label{weibull}
Consider the Weibull density $p(x)\propto x^{k-1}\exp(-\left({x}/{\lambda}\right)^k)$ with parameters $k,\lambda>0$ and the weight $w=x^{2-k}$. The weighted Poincar\'e constant is $C(p,w)=\lambda^k/k^2$ and the saturating function is $h(x)=x^k-\lambda^k$. Let $L^{(\alpha)}_i$ be the Laguerre polynomial of degree $i$ with parameter $\alpha$. We guess that $e_i(x)=x^{k-1}L^{(1)}_{i-1}\left(x^k/\lambda^k\right)$ is an eigenvector of $\Li$ associated to the eigenvalue $\lambda^k/(k^2i)$ for all $i\in\N$, but we are not able to provide a proof of this statement through our methods. 

Regarding the weight $w=1$, we are not able to obtain exact results (except when $k=2$). We can use \eqref{eq:stkb} because $\tau(x)$ has an explicit expression for this density providing non trivial upper bounds as soon as $k>1, \lambda>2$, namely 
\begin{equation*}
    \tau_{k, \lambda}(x) = \frac{\lambda^k}{k^2} x^{1-k} \left( k x - \lambda \Gamma(1/k) + e^{(x/\lambda)^k} \lambda \Gamma(1/k, (x/\lambda)^k) \right).
\end{equation*}
We can also use numerical approximations along the lines of the previous examples and urge the interested reader to consult  the supplementary material \cite{GeSwSup} for illustrations and pretty plots. 
\end{example}

\begin{example}[Weighted Gaussian bounds from \cite{Joulin}]\label{ex:bonnefj}
Consider as in Example \ref{ex:weightgau} the standard Gaussian measure with density $p(x)\propto \mathrm{exp}({-{x^2}/{2}})$ and the weight $w(x)={1}/{(1+bx^2)}$ where $b\geq 0$. Thanks to \cite{Joulin}, $C(p,w)$ is known and given by
\begin{equation*}
C(p,w)=\begin{cases}
\frac{1}{1-b}&\text{if  }0\leq b\leq \frac{1}{2},\\
4b&\text{if  }b\geq \frac{1}{2}.
\end{cases}
\end{equation*}
Remark that 
\begin{equation*}
\left\|k_{w}\right\|^2_{L^2(pw\otimes pw)}=\int_{\R}\int_{\R}\frac{K(x,y)^2}{p(x)p(y)}\left(1+bx^2\right)\left(1+by^2\right)dx\,dy.
\end{equation*}
Hence, as $b$ increases, the bound given by Proposition \ref{bound_k} increases. This implies that the convergence of the sequence of ratios  $\big({\Li ^{n+1}g_0}/{\Li ^ng_0}\big)(x)$ towards $C(p,w)$ gets slower, as shown by Theorem \ref{rate3}. It therefore seems that our numerical approach is better suited to small values of $b$. Fix, for the sake of illustration,   $b=0.1$. Then 
$\left\|k_w\right\|_{L^2(pw\otimes pw)}\approx 4.266$ 
and, with $g_0=1$ and $x=0$, the first elements of $\big({\Li ^{n+1}1}/{\Li ^n 1}\big)(x)$ are approximately
\begin{equation*}
1.06667,\ 1.0925,\ 1.10507,\ \ldots
\end{equation*}
This sequences converges to $C(p,w)=(1-b)^{-1}\approx 1.11111$, as predicted by \cite{Joulin}.
\end{example}

\appendix

\section{Relation between $\Li$ and $\LS$}
\label{sec:relll}
In this section, we show that $\Li$ has, in some sense, the inverse spectral properties of $\LS$. This will be helpful in the proof of  Proposition \ref{C12}. We assume that $C(p,w)<\infty$ and $L^2(pw)\subset L^1_{\text{loc}}(]a,b[)$.
The Sturm-Liouville operator $\LS$ has already been defined on the space of twice
differentiable functions in the introduction. Now, we give a weak version
valid on the whole space $H^1(p,w)$. We define
$\LS:H^1(p,w)\rightarrow H^1_c(p,w)^*$ by
\begin{equation*}
\LS h:H^1_c(p,w)\rightarrow \R:v\mapsto -\E_p\left[h'v'w\right]
\end{equation*}
where $H^1_c(p,w)^*$ is the dual space of $H^1_c(p,w)$. We can define its pseudo-inverse operator $\LS^{-1}:L^2(p)\rightarrow H^1_c(p,w)$ by associating to $f\in L^2(p)$ the solution $h\in H^1_c(p,w)$ of
\begin{equation}
\label{SL faible}
-\E_p\left[h'v'w\right]=\E_p\left[fv\right], \quad \forall v\in H^1_c(p,w). 
\end{equation}
This solution exists and is unique by the Riesz representation
theorem. Indeed, using Cauchy-Schwarz and Poincar\'e inequalities, we
can see that the form $v\in H^1_c(p,w)\mapsto \E_p\left[fv\right]$ is
in $H^1_c(p,w)^*$. If $h\in H^1(p,w)$, we have by Theorem \ref{theo2} 
\begin{equation*}
\E_p\left[\Li h'\,v'w\right]=\E_p\left[-\Ti h\,v'\right]=\E_p\left[hv\right]=-\E_p\left[(\LS^{-1}h)'v'w\right]
\end{equation*}
for all $v\in H^1_c(p,w)$. By uniqueness of the solution of \eqref{SL
  faible}, it follows that $-I\Li h'=\LS^{-1}h$. The operators
$\LS^{-1}$ and $\Li$ are thus related by
the following identity:
\begin{equation}\label{LS-1}
-\LS^{-1}=
I\Li D
\end{equation}
on $ H^1(p,w)$.  We say that $\lambda\in\mathbb{C}$ is an eigenvalue
of $-\LS$ if there exists  $h\in H^1(p,w)$ such that
\begin{equation}
\label{eigen LS}
\E_p\left[h'v'w\right]=\lambda\E_p\left[hv\right], \quad \forall v\in H^1_c(p,w). 
\end{equation}
First, observe that $h\in H^1_c(p,w)$ is an eigenvector of $-\LS$ with eigenvalue $\lambda$ if and only if $h$ is an eigenvector of $-\LS^{-1}$ with eigenvalue $\lambda^{-1}$. Then, remark that
\begin{align*}
\Li h'=D(-\LS^{-1})Ih'=D(-\LS^{-1})h=D(\frac{1}{\lambda} h)=\frac{1}{\lambda} h'.
\end{align*}
This leads us to the following.
\begin{corollary}
\label{relation vp}
Assume that (H1)-(H2) hold. A function $h\in H^1_c(p,w)$ is an eigenvector of $-\LS $ with eigenvalue $\lambda$ if and only if $h'$ is an eigenvector of $\Li$ with eigenvalue $\lambda^{-1}$. 
\end{corollary}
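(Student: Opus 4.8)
The plan is to read the corollary off the operator identity \eqref{LS-1}, that is $-\LS^{-1}=I\Li D$ on $H^1(p,w)$, combined with $I=D^{-1}$ on $H^1_c(p,w)$ (Proposition \ref{ID}). A preliminary observation is that the scalar $\lambda$ in \eqref{eigen LS} is necessarily nonzero and that an eigenvector $h$ of $-\LS$ necessarily satisfies $h'\neq 0$: testing \eqref{eigen LS} with $v=h$ yields $\E_p[|h'|^2w]=\lambda\,\E_p[h^2]$, so $\lambda=0$ would force $h'=0$ a.e., hence $h$ constant, hence $h\equiv 0$ because $\E_p[h]=0$, contradicting $h\neq 0$; and $h'=0$ would likewise force $h\equiv 0$. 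Thus $h'$ is a legitimate candidate to be an eigenvector of $\Li$.

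Next I would record the equivalence between the spectral problems for $-\LS$ and for $-\LS^{-1}$: for $h\in H^1_c(p,w)$ and $\lambda\neq 0$, relation \eqref{eigen LS} can be rewritten as $-\E_p[h'v'w]=\E_p[(-\lambda h)v]$ for all $v\in H^1_c(p,w)$, which by the defining property \eqref{SL faible} of $\LS^{-1}$ and the uniqueness therein is exactly $h=-\lambda\,\LS^{-1}h$, i.e.\ $-\LS^{-1}h=\lambda^{-1}h$. Conversely, unfolding $\LS^{-1}h=-\lambda^{-1}h$ through \eqref{SL faible} gives back \eqref{eigen LS}. Hence $h$ is an eigenvector of $-\LS$ with eigenvalue $\lambda$ if and only if it is an eigenvector of $-\LS^{-1}$ with eigenvalue $\lambda^{-1}$.

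It then remains to transport this through \eqref{LS-1}. For the forward implication, from $-\LS^{-1}h=\lambda^{-1}h$ and $-\LS^{-1}h=I\Li Dh=I\Li h'$ I apply $D$ and use $DI=\id$ on $E^2(p,w)$ to get $\Li h'=\lambda^{-1}h'$; since $h'\neq 0$, this says $h'$ is an eigenvector of $\Li$ with eigenvalue $\lambda^{-1}$. For the converse, from $\Li h'=\lambda^{-1}h'$ with $h\in H^1_c(p,w)$ I apply $I$ and use $Ih'=IDh=h$ to get $I\Li Dh=\lambda^{-1}h$, i.e.\ $-\LS^{-1}h=\lambda^{-1}h$ by \eqref{LS-1}, and the equivalence of the previous paragraph then gives that $h$ is an eigenvector of $-\LS$ with eigenvalue $\lambda$. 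Combining the two implications yields the statement.

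I do not anticipate a genuine difficulty: the argument is bookkeeping with the weak formulations. The only points deserving a little care are that the candidate eigenvector $h'$ be nonzero (handled in the preliminary step) and that the identities $DI=\id$ on $E^2(p,w)$, $ID=\id$ on $H^1_c(p,w)$, and \eqref{LS-1} are all invoked on the spaces where Proposition \ref{ID} and the derivation of \eqref{LS-1} guarantee them; since we work throughout under (H1)--(H2), this is legitimate.
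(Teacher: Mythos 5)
Your proposal is correct and follows essentially the same route as the paper: the paper derives the corollary from the weak formulation \eqref{SL faible}, the identity $-\LS^{-1}=I\Li D$ in \eqref{LS-1}, and $I=D^{-1}$ from Proposition \ref{ID}, exactly as you do. The only difference is that you spell out the step the paper merely "observes" (the equivalence between eigenvectors of $-\LS$ and of $-\LS^{-1}$, plus the nondegeneracy $\lambda\neq 0$, $h'\neq 0$), which is a welcome but not substantively different elaboration.
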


We end this section by recalling a classical result.
\begin{proposition}
\label{eigen}
Assume that (H1)-(H2) hold. If $H^1(p,w)$ is dense in $L^2(p)$ and the injection of $H^1(p,w)$ in $L^2(p)$ is compact, the eigenvalues of $-\LS$ form an increasing sequence $\left\{\lambda_i:i\in\N\right\}$ of real positive numbers converging to infinity and such that $\lambda_1=C(p,w)^{-1}$. Moreover, there exists a Hilbert basis of $H^1(p,w)$ of associated eigenvectors $\left\{v_i:i\in\N\right\}$. 
\end{proposition}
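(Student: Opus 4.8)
The plan is to read off the statement from the spectral theorem for the compact self-adjoint operator $\Li$, transported to $-\LS$ through the correspondence of Corollary \ref{relation vp} (equivalently, through the identity $-\LS^{-1}=I\Li D$ of \eqref{LS-1}). First I would note that the hypotheses here are exactly those of Proposition \ref{C12}, so $\Li$ is compact; together with Proposition \ref{Li continuous} (self-adjointness and positivity) and Proposition \ref{AAA} this yields a non-increasing sequence of strictly positive eigenvalues $\kappa_1\ge\kappa_2\ge\cdots$, each of finite multiplicity, with $\kappa_i\to 0$ and $\kappa_1=C(p,w)$, together with a Hilbert basis $\{e_i:i\in\N_0\}$ of $E^2(p,w)$ made of associated eigenvectors. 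Recall that under (H1)-(H2), Proposition \ref{ID} gives $E^2(p,w)=L^2(pw)$.

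Next I would transfer these facts. For the eigenvalues: any weak eigenvalue $\lambda$ of $-\LS$ with eigenvector $h\in H^1_c(p,w)\setminus\{0\}$ is nonzero, since testing \eqref{eigen LS} with $v=h$ gives $\E_p[|h'|^2w]=\lambda\|h\|_{L^2(p)}^2$, and $\lambda=0$ would force $h'=0$ a.e., hence $h$ constant, hence $h=0$ in $H^1_c(p,w)$. Thus Corollary \ref{relation vp} applies in both directions and identifies the set of eigenvalues of $-\LS$ with $\{\kappa_i^{-1}:i\in\N_0\}$; listed with multiplicity these form a non-decreasing sequence of positive reals tending to $+\infty$, so the eigenvalues of $-\LS$ form an increasing sequence $\{\lambda_i\}$ with $\lambda_1=\kappa_1^{-1}=C(p,w)^{-1}$. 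For the eigenbasis: by Proposition \ref{ID} and the defining inner product of $H^1_c(p,w)$, the map $D:H^1_c(p,w)\to E^2(p,w)$ is an isometric isomorphism with inverse $I$, because $\|h\|_{H^1_c(p,w)}^2=\E_p[|h'|^2w]=\|Dh\|_{L^2(pw)}^2$. Hence $\{v_i:=Ie_i\}$ is a Hilbert basis of $H^1_c(p,w)$; since $v_i'=DIe_i=e_i$ is an eigenvector of $\Li$ with eigenvalue $\kappa_i$, Corollary \ref{relation vp} shows $v_i$ is an eigenvector of $-\LS$ with eigenvalue $\kappa_i^{-1}$. Adjoining the constant function (a weak eigenvector for the value $0$, $L^2(p)$-orthogonal to every $v_i$) and, within each finite-dimensional eigenspace, arranging the $v_i$ to be $L^2(p)$-orthonormal as well — permissible since $-\LS$ is symmetric on $L^2(p)$ — produces a Hilbert basis of $H^1(p,w)$ for its natural inner product.

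I do not expect a genuine obstacle, the result being the classical spectral theorem read through the $(-\LS)\leftrightarrow\Li$ dictionary already set up in the excerpt. The only point requiring some care is the last step: passing from a Hilbert basis of the codimension-one subspace $H^1_c(p,w)$ to one of $H^1(p,w)$ (the constant-function direction) and checking orthogonality with respect to the intended inner product on $H^1(p,w)$ rather than merely the $H^1_c$-seminorm.
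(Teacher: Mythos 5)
Your argument has a circularity problem within the paper's deductive structure. The key input of your proof is the compactness of $\Li$, which you obtain by citing Proposition \ref{C12} (``the hypotheses here are exactly those of Proposition \ref{C12}, so $\Li$ is compact''). But in the paper Proposition \ref{C12} is itself proved \emph{from} Proposition \ref{eigen} (its proof begins by deducing (A1) from Proposition \ref{eigen} and Corollary \ref{relation vp}, and builds the eigenbasis $e_i=v_i'$ from the eigenvectors $v_i$ of $-\LS$ supplied by Proposition \ref{eigen}). So as written you are assuming the conclusion. The paper avoids this by proving Proposition \ref{eigen} directly from the classical variational spectral theorem: the symmetric bilinear form $a(h,v)=\E_p[h'v'w]$ is continuous and coercive on $V=H^1_c(p,w)$, the hypotheses give that $V$ is dense in $H=L^2_c(p)$ with compact injection, and then Theorem 7.3.2 and Proposition 7.3.4 of \cite{Allaire} yield the increasing sequence $\lambda_i\to\infty$, the eigenbasis, and $\lambda_1=C(p,w)^{-1}$ --- no compactness of $\Li$ is needed at this stage.

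Your route can be salvaged, but it requires an independent proof of the compactness of $\Li$ under the density/compact-injection hypotheses, e.g.\ via the factorization coming from \eqref{LS-1}: $\Li = D\circ(-\LS^{-1})\circ\iota\circ I$, where $I:E^2(p,w)\to H^1_c(p,w)$ and $D:H^1_c(p,w)\to E^2(p,w)$ are bounded (Proposition \ref{ID}), $-\LS^{-1}:L^2(p)\to H^1_c(p,w)$ is bounded by Lax--Milgram/Riesz, and $\iota:H^1_c(p,w)\hookrightarrow L^2(p)$ is compact by hypothesis; a composition containing one compact factor is compact. With that supplied, the rest of your transfer (nonvanishing of weak eigenvalues by testing \eqref{eigen LS} with $v=h$, the dictionary of Corollary \ref{relation vp}, the isometry $D:H^1_c(p,w)\to L^2(pw)$ carrying the eigenbasis $\{e_i\}$ to $\{v_i=Ie_i\}$, and the adjunction of the constant direction to pass from $H^1_c(p,w)$ to $H^1(p,w)$) is sound and in fact mirrors, in reverse, the computation the paper performs in its proof of Proposition \ref{C12}. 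Note also that your care about the constant-function direction is reasonable, since the paper itself is slightly loose here: its proof of Proposition \ref{C12} only ever uses the $v_i$ as an orthonormal basis of $H^1_c(p,w)$ for the inner product $\E_p[h'g'w]$.
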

\begin{proof}[Proof of Proposition \ref{eigen}]
The symmetric bilinear form $a(h,v):=\E_p\left[h'v'w\right]$ is continuous and coercive on $H^1_c(p,w)$. The hypotheses (C1)-(C2) ensure that $H^1_c(p,w)$ is dense in $L^2_c(p)$ and that the injection of $H^1_c(p,w)$ in $L^2_c(p)$ is compact. So, we can apply theorem 7.3.2 in \cite{Allaire} with $H=L^2_c(p)$ and $V=H^1_c(p,w)$. It tells us exactly our statement except the equality $\lambda_1=C(p,w)^{-1}$ which follows from proposition 7.3.4 in \cite{Allaire}.

\end{proof}

\section{Further proofs}
\label{sn:furthproo}

\begin{proof}[Proof of curious identity]
Let   $h_2(x) = \int_c^x{p}/{(P \bar{P})}$ for some $c\in ]a,b[$. First, we compute using the Fubini-Tonelli Theorem 
\begin{align*}
    \E_p[|h_2|]&=\int_a^c p(x)\int_x^c\frac{p(y)}{P(y)\bar{P}(y)}dy\,dx+\int_c^b p(x)\int_c^x\frac{p(y)}{P(y)\bar{P}(y)}dy\,dx\\
    &=\int_a^c \frac{p(y)}{P(y)\bar{P}(y)}\int_a^y p(x)dx\,dy+\int_c^b \frac{p(y)}{P(y)\bar{P}(y)}\int_y^b p(x)dx\,dy\\
    &=\int_a^c \frac{p(y)}{\bar{P}(y)}dy+\int_c^b \frac{p(y)}{P(y)}dy\\
    &=-\int_a^c \log (\bar{P}(y))'dy+\int_c^b \log(P(y))'dy\\
    &=-\log (\bar{P}(c))-\ln(P(c))=-\log(P(c)\bar{P}(c)).
\end{align*}
As the last expression is finite, we have $h_2\in L^1(p)$. Since $h_2$ is increasing, we can apply the second identity in \eqref{eq:saumaaaa} to obtain 
 \begin{align*}
      \mathrm{Var}_p\left[h_2\right] &    = \int \int  \frac{p(x)}{P(x) \bar{P}(x)} P(x \wedge  y) \bar{P}(x \lor y) \frac{p(y)}{P(y) \bar{P}(y)} dx dy\\
     & = \int_a^b \int_a^x  \frac{p(x)}{P(x) }   \frac{p(y)}{ \bar{P}(y)}  dy dx + \int_a^b \int_x^b\frac{p(x)}{ \bar{P}(x)}  \frac{p(y)}{P(y) } dy dx\\
     & =- \int_a^b \log(P(x))' \log (\bar P(x)) dx + \int_a^b \log(P(x)) \log (\bar P(x))' dx\\
     & = - \left[ \log(P(x)) \log (\bar P(x))\right]_a^b + 2 \int_a^b \log(P(x)) \log (\bar P(x))' dx\\
     & = 0 - 2 \int_0^1 \frac{\log(u)}{1-u}  du = \frac{\pi^2}{3}.
 \end{align*}
 Hence
\begin{equation*}
    \mathrm{Var}_p\left[h_2\right] = \frac{\pi^2}{3}
\end{equation*}
 for all densities $p$ on the real line. 
\end{proof}
\begin{proof}[Proof of Lemma \ref{dif_increasing}]
Let $f\in H^1(p,w)$. Remark that $f'^+$ and $f'^-$ still belong to $L^2(pw)\cap L^1_{\text{loc}}(]a,b[)$ and so to $E^2(p,w)$ by Proposition \ref{ID}. Hence, $f_1:=I(f'^+)$ and $f_2:=I(f'^-)$ belong to $H^1(p,w)$ by Proposition \ref{ID}. Moreover, we have $f_1-f_2=I[f'^+-f'^-]=I[f']=f$ and $f_1$ and $f_2$ are increasing.
\end{proof}

\begin{proof}[Proof of Lemma \ref{lem increasing}]
Let $h\in H^1_c(p,w)$ be non monotone. Take $c\in ]a,b[$ and set $g=\int_c^{\cdot}|h'|$. We have obviously $\E_p[|h'|^2]=\E_p[|g'|^2]$ and $g$ is increasing. First, assume that $g\in L^2(p)$. 
As $g$ is continuous, we can find  $d\in]a,b[$ such that $g(d)=\E_p[g]$. Then 
\begin{equation}
\label{avar}
\Var_p[h]\leq \E_p\left[|h-h(d)|^2\right]=\E_p\left[\left|\int_d^{\cdot} h'\right|^2\right]\leq\E_p\left[\left(\int_d^{\cdot}|h'|\right)^2\right]=\Var_p[g].
\end{equation}
The second inequality is an equality if and only if $h'$ doesn't change sign on $]a,d]$ and $[d,b[$. If $h'$ is positive on $]a,d[$ and negative on $]d,b[$ (or vice versa), it means that $h(d)$ is an extremum of $h$. So, the first inequality in \eqref{avar} would be strict since $h(d)\neq \E_{p}[h]$. Therefore, \eqref{avar} is an equality if and only if if $h$ is monotone on $]a,b[$.

Now assume that $g\notin L^2(p)$. Set $g_n(x)=\max\left\{-n,\min\left\{g,n\right\}\right\}$. We have $g_n\in H^1(p,w)$, $\E_p[|g_n'|^2w]\leq \E_p[|h'|^2w]$ and $g_n$ is increasing for all $n\in\N$. They are two possibilities : either $(\E_p[g_n])$ is bounded or there exists a subsequence of $(g_n)$, still written $(g_n)$, such that $\lim_{n\to\infty}\E_p[g_n]\in\left\{-\infty,\infty\right\}$. In the first case, we have by monotone convergence
$$
\lim_{n\to\infty}\Var_p[g_n]=\lim_{n\to\infty}\left(\E_p[g_n^2]-\E_p[g_n]^2\right)=\infty.
$$
In the second case, we have by the Fatou's lemma
\begin{equation*}
{\lim\inf}_{n\to\infty}\Var_p[g_n]\geq \int_a^b{\lim\inf}_{n\to\infty}\left(g_n-\E_p[g_n]\right)^2p.
\end{equation*}
Since $\lim_{n\to\infty}\left(g_n-\E_p[g_n]\right)^2p=\infty$ a.e. we get also $\lim_{n\to\infty}\Var_p[g_n]=\infty$.
So, in both cases, we can find a $n\in\N$ such that $\Var_p[g_n]> \Var_p[h]$ and we get the desired result. 
\end{proof}

\begin{proof}[Proof of Theorem \ref{theo2}]
Take $g, h\in L^1(p)$, weakly differentiable and increasing. There exists $c\in ]a,b[$ such that $h\leq \E_p[h]$ on $]a,c]$. As the function $(x,y)\mapsto g'(x) 1_{]a,x]}(y)(\E_p[h]-h(y))p(y)$ is measurable and positive on $]a,c[\times ]a,c[$, we can use the Fubini-Tonelli Theorem to obtain 
\begin{align*}
\int_a^c -g'(x)\Ti h(x) p(x)dx&=-\int_a^c g'(x)\int_a^c 1_{]a,x]}(y)(h(y)-\E_p[h])p(y)dy\,dx\\
&=-\int_a^c (h(y)-\E_p[h])p(y)\int_a^c 1_{[y,b[}(x)g'(x)dx\,dy\\
&=-\int_a^c (h(y)-\E_p[h])p(y)\int_y^c g'(x)dx\,dy\\
&=\int_a^c (h(y)-\E_p[h])\left(g(y)-g(c)\right)p(y)dy.
\end{align*}
Using the equivalent representation $p\Ti h(x)=\int_x^b\left(\E_p[h]-h\right)p$ (see Definition 2.5 in \cite{Swan}), we can show that 
\begin{equation*}
\int_c^b -g'\Ti h p=\int_c^b (h(y)-\E_p[h])\left(g(y)-g(c)\right)p(y)dy
\end{equation*}
by similar computations. Since $-\Ti h\,g', (h-\E_p[h])(g-g(c))\geq 0$, we can put both calculations together to obtain 
\begin{align*}
\E_p\left[-\Ti h\,g'\right]&=\int_a^c -g'\Ti h \,p+\int_a^c -g'\Ti h \,p\\
&=\int_a^b (h(y)-\E_p[h])\left(g(y)-g(c)\right)p(y)dy\\
&=\E_p\left[(h-\E_p[h])g\right]=\Cov_p[g,h].
\end{align*}
The second equality in \eqref{eq:saumaaaa} follows from \eqref{caracL} and the Fubini-Tonelli Theorem since $g',h',k_1\geq 0$. Now, take $g$ and $h$ such that we can write $g=g_1-g_2$ and $h=h_1-h_2$ where $g_1,\,g_2,\,h_1,\, h_2\in L^2(p)$ are increasing functions. Since $\Ti$ is linear, we easily see that \eqref{eq:saumaaaa} is still valid. Finally, remark that $g$ can be written in such form if $g\in H^1(p,w_1)$ for some weights $w_1$
such that $C(p,w_1)<\infty$ by Lemma \ref{dif_increasing}. As the same holds for $h$, the second claim of the Theorem follows.
\end{proof}

\begin{proof}[Proof of Theorem \ref{theo1}]
Take $g\in H^1(p,w_h)$ such that $g'\geq 0$ a.e. We begin by
  showing that the function $G:]a,b[^2\rightarrow\R$ defined by
\begin{equation*}
G(x,y)=\frac{g'(x)}{\sqrt{-h'(x)}}\sqrt{-k_1(x,y)h'(y)}
\end{equation*}
is in $L^2(p\otimes p)$. As $G^2$ is measurable and positive, we have by the Fubini-Tonelli Theorem
\begin{align*}
\int_{]a,b[^2}G^2(x,y)p(x)p(y)dx\,dy&=\int_a^b\left(\int_a^b G^2(x,y)p(y)dy\right)p(x)dx\\
&=\int_a^b\frac{|g'(x)|^2}{h'(x)}\left(\int_a^b \frac{K(x,y)}{p(x)p(y)}h'(y)p(y)dy\right)p(x)dx\\
&=\int_a^b |g'(x)|^2\frac{-\Ti h(x)}{h'(x)}p(x)dx
\end{align*}
where we used \eqref{caracL} for the last equality. The last expression is finite since $g\in H^1(p,w_h)$. Let $X,Y\sim p$. Using Theorem \ref{theo2}, the Cauchy-Schwarz inequality and the previous computation, we get
\begin{align*}
\Var[g(X)]&=\E\left[k_1(X,Y)g'(X)g'(Y)\right]\\
&=\E\left[\frac{g'(X)}{\sqrt{-h'(X)}}\left(-k_1(X,Y)h'(Y)\right)^{\frac{1}{2}}\frac{g'(Y)}{\sqrt{-h'(Y)}}\left(-k_1(Y,X)h'(X)\right)^{\frac{1}{2}}\right]\\
&=\E\left[G(X,Y)G(Y,X)\right]\\
&\leq\E\left[G^2(X,Y)\right]^{\frac{1}{2}}\E\left[G^2(Y,X)\right]^{\frac{1}{2}}\\
&=\E\left[|g'(X)|^2\frac{-\Ti h(X)}{h'(X)}\right]^{\frac{1}{2}}\E\left[|g'(Y)|^2\frac{-\Ti h(Y)}{h'(Y)}\right]^{\frac{1}{2}}\\
&=\E\left[|g'(X)|^2\frac{-\Ti h(X)}{h'(X)}\right].
\end{align*}
Hence $C(p,w_h)\leq 1$ since it is enough to consider increasing functions by Lemma \ref{lem increasing}. Furthermore, this inequality is an equality if and only if $G$ is symmetric. This is true if and only if ${g'}/{h'}$ is constant over $]a,b[$ or, equivalently, $g=\alpha h+\beta$ for some $\alpha,\,\beta\in\R$. This choice of $g$ is allowed as soon as $h\in L^2(p)$. Indeed, by Theorem \ref{theo2}, we have $\E_p[|h'|^2w_h]=-\E_p[h'\Ti h]=\Var_p[h]<\infty$. In particular, $C(p,w_h)=1$ if $h\in L^2(p)$. 
\end{proof}

\begin{proof}[Proof of Corollary \ref{lem:optimstek}]
Let $w$ be a weight such that $C(p,w)=1$. If $\E[w(X)]=\infty$, we have nothing to show. Assume $\E[w(X)]<\infty$. By hypothesis, we have $\text{id}\in L^2(p)$. Remark that $\E\left[w(X)|\text{id}'(X)|^2\right]=\E[w(X)]$. So, we have $\text{id}\in H^1(p,w)$. By the Poincar\'e inequality, we get 
$\Var[\text{id}(X)]\leq \E[w(X)].$
Finally, Theorem \ref{theo2} ensures that $\Var[X]=\E[\tau(X)]$ since $\id\in L^2(p)$ is increasing.
\end{proof}

\begin{proof}[Proof of Proposition \ref{ID}]
The following connections hold between $H^1(p,w)$ and $E^2(p,w)$
\begin{equation}
\label{EH}
f\in E^2(p,w)\Rightarrow \int_c^{\cdot} f\in H^1(p,w) \quad\text{    and    }\quad h\in H^1(p,w)\Rightarrow h' \in E^2(p,w).
\end{equation}
Indeed, if $f\in E^2(p,w)$, we have $f\in L^1_{\text{loc}}(]a,b[)$ and so $\int_c^{\cdot} f\in H^1(p,w)$ by Lemma VIII.2 in \cite{Brezis}. If $h\in H^1(p,w)$, there exists a continuous version of $h$ such that $\int_c^{\cdot} h'=h-h(c)\in L^2(p)$ by Theorem VIII.2 in \cite{Brezis}. 
We deduce that the operators $I:E^2(p,w)\rightarrow H^1_c(p,w)$ and $D:H^1_c(p,w)\rightarrow E^2(p,w)$ are well defined. The statement $I=D^{-1}$ is obvious. The continuity of $I$ and $D$ follows from the choice of norms on $E^2(p,w)$ and $H^1_c(p,w)$.

In order to show that $L^2(pw)\cap L^1_{\text{loc}}(]a,b[)= E^2(p,w)$, we just have to show $L^2(pw)\cap L^1_{\text{loc}}(]a,b[)\subset E^2(p,w)$ as the other inclusion is evident. By Lemma VIII.2 in \cite{Brezis}, every function in $L^1_{\text{loc}}(]a,b[)$ can be written as the weak derivative of a function $h\in L^1_{\text{loc}}(]a,b[)$. Assume by contradiction that there exists a function $h$ weakly differentiable such that $\E_p[h]=0$, $\left\|h'\right\|_{L^2(pw)}=1$ but $\left\|h\right\|_{L^2(p)}=\infty$. For each $n\in\N$, set $h_n(x)=\max\left\{-n,\min\left\{h,n\right\}\right\}$.
We have $\left\|h_n\right\|_{L^2(p)}\leq n$ and $\left\|h_n'\right\|_{L^2(pw)}\leq\left\|h'\right\|_{L^2(pw)}= 1$, so that $h_n\in H^1(p,w)$. As $(h_n^2)$ is an increasing sequence of functions and $h_n\rightarrow h$ a.e.\ we have by monotone convergence that $\left\|h_n\right\|_{L^2(p)}\rightarrow \left\|h\right\|_{L^2(p)}=\infty$. This is a contradiction with the fact that $C(p,w)<\infty$.

Finally, we deal with the four equivalences. The two first statements are equivalent because $E^2(p,w)$ and
  $H^1_c(p,w)$ are homeomorphic by Proposition \ref{ID}. The third
  assertion entails the fourth one since $L^2(pw)\cap L^1_{\text{loc}}(]a,b[)= E^2(p,w)$.  
  The fourth one implies
  the first one because $L^2(pw)$ is a Hilbert space. Indeed, it is
  the case as soon as $pw$ is the density of a $\sigma$-finite measure
  (see section 3.2 in \cite{Heinonen}), which is true since
  $pw\in L^1_{\text{loc}}(]a,b[)$. It remains to show that the first
  one entails the third one. By contradiction, assume that there
  exists $f\in L^2(pw)\setminus L^1_{\text{loc}}(]a,b[)$. Define $f_n(x)=\max\left\{-n,\min\left\{f,n\right\}\right\}$.
We have $(f_n)\subset L^2(pw)\cap L^1_{\text{loc}}(]a,b[)=E^2(p,w)$ and $f_n\rightarrow f$ in $L^2(pw)$ by dominated convergence. Hence, $E^2(p,w)$ isn't closed in $L^2(pw)$. Since $E^2(p,w)$ is a subspace of $L^2(pw)$ endowed with the same norm, this implies that it is not a Hilbert space. \qedhere
\end{proof}

\begin{proof}[Proof of Proposition \ref{Li continuous}]
Let $f\in E^2(p,w)$. For every $h\in E^2(p,w)$, since $C(p,w)<\infty$, we can use Theorem \ref{theo1} to obtain
\begin{equation}
\label{sym}
\E_{p}[h\Li  f\,w]=-\E_p\left[h\Ti If\right]
=\E_p\left[Ih\,If\right].
\end{equation}
Moreover, we have
\begin{equation*}
\E_p\left[Ih\,If\right]\leq \left\|Ih\right\|_{L^2(p)}\left\|If\right\|_{L^2(p)} \leq C(p,w) \left\|f\right\|_{L^2(pw)}\left\|h\right\|_{L^2(pw)}.
\end{equation*}
Putting these two computations together, we see that the form $h\in E^2(p,w)\mapsto \E_{p}[h\Li  f\,w]$ is linear and continuous. By the Riesz representation theorem, there exists a $g\in E^2(p,w)$ such that $\E_{p}[h\Li  f\,w]=\E_{p}[h g\,w]$ for all $h\in E^2(p,w)$. As $pw\in L^1_{\text{loc}}(]a,b[)$, $E^2(p,w)$ contains the indicator functions of  compact sets.  
So, we have
\begin{equation*}
\int_E (\Li  f-g)pw=0
\end{equation*}
for all compact set $E\subset ]a,b[$. This implies that $\Li f=g$ a.e. We conclude that $\Li f\in E^2(p,w)$ and $\Li$ is well defined. Equation \eqref{sym} also shows that $\Li$ is self-adjoint and  positive, in the sense that $\E_{p}[f\Li  f\,w]\geq 0$ for all $f\in E^2(p,w)$.
The Hellinger-Toeplitz Theorem says that a self-adjoint operator defined everywhere on a space is continuous on this space (see the Corollary of Theorem III.12 in \cite{Modern}). Hence, $\Li$ is continuous. Another way of seeing that $\Li$ is continuous is to take $h=\Li f$ in \eqref{sym}. Finally, as $\Li $ is self-adjoint, we have by Proposition 2.13 in Chapter 2 of \cite{Conway}
\begin{equation*}
\left\|\Li \right\|_{E^2(p,w)\rightarrow E^2(p,w)}=\sup_{f\in E^2(p,w),\left\|f\right\|=1}\E_{p}\left[f\Li fw\right]=\sup_{f\in E^2(p,w),\left\|f\right\|=1}\Var_p\left[If\right].
\end{equation*}
Moreover, using that $I$ is a bijection, we can see that
\begin{equation*}
\sup_{f\in E^2(p,w),\left\|f\right\|_{E^2}=1}\Var_p\left[If\right]=\sup_{h\in H^1_c(p,w),\left\|h'\right\|_{E^2}=1}\Var_p\left[h\right]=C(p,w).
\end{equation*}
With these two computations, we get the desired conclusion.
\end{proof}

\begin{proof}[Proof of Proposition \ref{prop:eigen}]
Let $e\in E^2(p,w)$ be an eigenvector of $\Li$ with eigenvalue $\kappa$. As a preliminary remark, observe that 
\begin{equation}
\label{klop1}
\E_p\left[If\, Ie\right]=-\E_p\left[f\Ti Ie\right]=\E_p\left[f\Li e\,w\right]=\kappa\E_p\left[few\right]
\end{equation}
for all $f\in E^2(p,w)$ by Theorem \ref{theo2}. Assume that $e>0$ a.e. Then, we have
$$
\kappa=\frac{\Li e}{e}=\frac{-\Ti Ie}{(Ie)'w}.
$$
Since $Ie\in L^2(p)$, this implies that $\kappa=C(p,w)$ by Theorem \ref{lem1}. 

Now, assume that $\kappa=C(p,w)$. With \eqref{klop1}, we get $\left\|Ie\right\|^2_{L^2(p)}=C(p,w)\E_p\left[e^2w\right]$,
which means that $Ie$ saturates $\PIpw$. By Lemma \ref{lem increasing}, we know that the function which saturates $\PIpw$ must be increasing. Hence, we have $e\geq 0$ a.e. We have still to show that $e>0$ a.e. Assume by contradiction that $e=0$ on a non negligible subset of $]a,b[$. Then, we would have $\Li e=C(p,w)e=0$ on this subset. This is a contradiction because, for almost every $x\in ]a,b[$, 
\begin{equation*}
\Li e(x)=\frac{1}{p(x)w(x)}\int_a^b K(x,\cdot)e>0
\end{equation*}
since $e\geq 0$ and $K(x,\cdot)>0$ on $]a,b[$.

We are left to show that $\kappa_1$ is a simple eigenvalue of $\Li$. Assume that $e_1,\,e_2\in E^2(p,w)$ are eigenvectors of $\Li $ associated to $C(p,w)$ with $\left\|e_1\right\|^2_{L^2(pw)}=\left\|e_2\right\|^2_{L^2(pw)}$. If $e_1\neq e_2$, the sets $\left\{e_1>e_2\right\}$ and $\left\{e_1<e_2\right\}$ must be non negligible because the two functions have the same norm. Thus, $e_1-e_2$ must change sign in $]a,b[$. But $e_1-e_2$ is also an eigenvector associated to $C(p,w)$. This contradicts the first part of the proof.
\end{proof}

\begin{proof}[Proof of Proposition \ref{AAA}]
We recall that $L^2(pw)=E^2(p,w)$ is separable. If $\Li$ is compact, as it is also self-adjoint and positive, (A1) and (A2) follow from Theorems VI.15 and VI.16 in \cite{Modern}. 
Theorem 4.6 in chapter 2 of \cite{Conway} tells us that, under (A1)-(A2), $\Li$ is compact. 
The same Theorem ensures that $\left\|\Li \right\|_{E^2(p,w)\rightarrow E^2(p,w)}$ is the largest eigenvalue of $\Li $. So, we get (A3) from Proposition \ref{Li continuous}. 
\end{proof}

\begin{proof}[Proof of Proposition \ref{prop:compconditiona}]
Before proceeding with the proof, we make the following remark. Since $\left\{e_i:i\in\N_0\right\}$ is a Hilbert basis of $E^2(p,w)$, any function $f\in E^2(p,w)$
can be written $f=\sum_{i=1}^{\infty}b_ie_i$ with $b_i=\E_p[f e_iw]$. Hence, we have $\left\|f\right\|^2_{E^2(p,w)}=\sum_{i=1}^{\infty}b_i^2$ and, using \eqref{klop1},
\begin{equation}
\label{klop2}
\left\|If\right\|^2_{L^2(p)}=\sum_{i,j}b_ib_j\E_p\left[Ie_iIe_j\right]=\sum_{i,j}b_ib_j\kappa_i\E_p\left[e_i
  e_j w\right]=\sum_{i=1}^{\infty} b_i^2\kappa_i. 
\end{equation}

Now, suppose that $h\in H^1_c(p,w)$ saturates $\PIpw$. As $h'\in E^2(p,w)$, we can write $h'=\sum_{i=1}^{\infty} b_i e_i$ for some $b_i\in\R$. 
By \eqref{klop2}, we have $\left\|h\right\|^2_{L^2(p)}=\left\|Ih'\right\|^2_{L^2(p)}=\sum_{i=1}^{\infty} b_i^2\kappa_i$.
Since $\kappa_1=C(p,w)$, it enables us to get
\begin{equation*}
\kappa_1\sum_{i=1}^{\infty} b_i^2=\kappa_1\left\|h'\right\|^2_{L^2(pw)}=\left\|h\right\|^2_{L^2(p)}=\sum_{i=1}^{\infty} b_i^2\kappa_i.
\end{equation*}
Hence, we have $\sum_{i=1}^{\infty} b_i^2(\kappa_1-\kappa_i)=0$, which implies $b_i=0$ as soon as $\kappa_i<\kappa_1$. By Proposition \ref{prop:eigen}, $\kappa_1$ is a simple eigenvalue of $\Li$. Thus, we have $h'=b_1e_1$. 
The inverse implication has already been proved in Proposition \ref{prop:eigen}. 
\end{proof}

\begin{proof}[Proof of Proposition \ref{C12}]
Property (A1) follows from Proposition \ref{eigen} and Corollary \ref{relation vp}. 
We check that (A2) holds. Let $\left\{v_i:i\in\N_0\right\}$ be the eigenvectors of $-\LS$ on $H^1_c(p,w)$ and set $e_i=v_i'$.  
Since $\left\{v_i:i\in\N_0\right\}$ is orthonormal in $H^1_c(p,w)$, we can compute  
\begin{equation*}
\E_p[e_ie_jw]=\E_p[v_i'v_j'w]=\delta_{ij}.
\end{equation*}
Let $e\in E^2(p,w)$. As $Ie\in H^1_c(p,w)$, we can write $Ie=\sum_{i=1}^{\infty}a_i v_i$ for some $a_i\in\R$. So, we have
\begin{equation*}
e=D I e=D\sum_{i=1}^{\infty}a_i v_i=\sum_{i=1}^{\infty}a_i Dv_i=\sum_{i=1}^{\infty}a_i e_i
\end{equation*}
where we have used the continuity of $D:H^1_c(p,w)\rightarrow E^2(p,w)$. We conclude that $\left\{e_i:i\in\N_0\right\}$ is a Hilbert basis of $E^2(p,w)$. As (A1)-(A2) hold, $\Li$ is compact by Proposition \ref{AAA}.
\end{proof}

\begin{proof}[Proof of Proposition \ref{bound_k}]
Since $\Li $ is a kernel operator with kernel $k_w\in L^2(pw\otimes pw)$, $\Li $ is a continuous Hilbert-Schmidt operator on $L^2(pw)$ by Theorem VI.23 in \cite{Modern}. Point (e) of Theorem VI.22 in \cite{Modern} ensures that every Hilbert-Schmidt operator is compact. As $L^2(pw)=E^2(p,w)$ by Proposition \ref{ID}, we reap the first statement. 

The first inequality is obvious since $C(p,w)=\kappa_1$. 
Theorems VI.22 and VI.23 in \cite{Modern} tell us that $\left\|k_w\right\|_{L^2(pw\otimes pw)}^2=\text{tr}(\Li^2)$ where tr denotes the trace of an operator. By Theorem VI.18 in \cite{Modern}, the trace of $\Li^2$ is
\begin{equation*}
\text{tr}(\Li^2)=\sum_{i\in \N}\E_p\left[e_i\Li^2 e_i\right]=\sum_{i\in \N}\E_p\left[\Li e_i\Li e_i\right]=\sum_{i\in \N}\kappa_i^2
\end{equation*}
where we have used the fact that $\Li$ is self-adjoint and (A1)-(A2).
\end{proof}

\begin{proof}[Proof of Proposition \ref{our_bound}]
  Let $f\in C^{\infty}(]a,b[)$ be such that $-(\LS f)'>0$ on $]a,b[$ and $\T(f'w)\in L^2(p)$. 
  We set $h=\T(f'w)=\LS f$. 
  As $p,w,f'\in C^2(]a,b[)$, $h$ is differentiable. For a differentiable function
  $g$ such that $\T g\in L^1(p)$, we necessarily have
  $\mathbb{E}_p[\T g] = \lim_{t\to \infty}g(t)p(t)-\lim_{t\to
    -\infty}g(t)p(t)$ so that, after some straightforward
  simplifications,
$$\Ti\T g(x)=\frac{1}{p(x)}\int_{-\infty}^x\left(\T g-\E[\T g]\right)p
=g(x)-\Phi g(x)$$
and thus
$\Ti h=\Ti \T(f'w)=f'w-\Phi (f'w)$. 
By assumption, we have $h\in L^2(p)$. 
Hence, we can use Theorem \ref{lem1} to obtain
\begin{equation*}
\inf \frac{f'w-\Phi (f'w)}{-(\LS f)'w}=\inf-\frac{\Ti h}{h'w}\leq C(p,w). 
\end{equation*} 
The upper bound on $C(p,w)$ can be deduced in the same way. 
\end{proof}

\begin{proof}[Proof of Proposition \ref{suite_mini}]
As $g_0\in E^2(p,w)$, we write $g_0=\sum_{i=1}^{\infty}a_i e_i$ for some $a_i\in\N$ by (A2). By assumption, we have $a_1=\E_p[g_0e_1w]\neq 0$. 
Using the continuity of $\Li $, (A2) and \eqref{klop2}, we compute
\begin{align*}
\frac{\left\|I\Li ^ng_0\right\|^2_{L^2(p)}}{\left\|\Li ^ng_0\right\|^2_{L^2(pw)}}
&=\frac{\left\|I\sum_{i=1}^{\infty}a_i \Li ^n e_i\right\|^2_{L^2(p)}}{\left\|\sum_{i=1}^{\infty}a_i \Li ^n e_i\right\|^2_{L^2(pw)}}\\
&=\frac{\left\|I\sum_{i=1}^{\infty}a_i \kappa_i^n e_i\right\|^2_{L^2(p)}}{\left\|\sum_{i=1}^{\infty}a_i \kappa_i^n e_i\right\|^2_{L^2(pw)}}\\
&=\frac{\sum_{i=1}^{\infty}a_i^2 \kappa_i^{2n+1}}{\sum_{i=1}^{\infty}a_i^2 \kappa_i^{2n}}
=\kappa_1\frac{\sum_{i=1}^{\infty}a_i^2 \left(\frac{\kappa_i}{\kappa_1}\right)^{2n+1}}{\sum_{i=1}^{\infty}a_i^2 \left(\frac{\kappa_i}{\kappa_1}\right)^{2n}}.
\end{align*}
By Proposition \ref{prop:eigen} and (A1), the last expression converges  to $\kappa_1\frac{a_1^2}{a_1^2}=\kappa_1=C(p,w)$ as $n\to \infty$.
\end{proof}

\begin{proof}[Proof of Proposition \ref{rate3}]
Set $g_n=A^n g_0$ for all $n\in \N$. Putting together Theorem \ref{convergence1} and \eqref{vitesse2}, we get
\begin{equation*}
C(p,w)|g_n-a_1e_1|(x)\leq \left\|k_w(x,\cdot)\right\|_{L^2(pw)}\left\|g_0-a_1e_1\right\|_{L^2(pw)}\left(\frac{\kappa_2}{C(p,w)}\right)^{n-1}=B_{n}(x).
\end{equation*}
We have
\begin{equation*}
\left|\frac{g_{n+1}(x)}{g_n(x)}-1\right|\leq \frac{|g_{n+1}-a_1e_1|(x)}{|g_n|(x)}+\frac{|g_{n}-a_1e_1|(x)}{|g_n|(x)}\leq \frac{2B_n(x)}{C(p,w)|g_n|(x)}.
\end{equation*}
Further, it holds that
\begin{align*}
|g_n|(x)= \left|a_1e_1-(a_1e_1-g_n)\right|(x)&\geq  \left|a_1e_1\right|(x)-\left|a_1e_1-g_n\right|(x)\\
&\geq \left|a_1e_1\right|(x)-\frac{B_n(x)}{C(p,w)}.
\end{align*}
We have gathered everything needed to conclude
\begin{align*}
\left|\frac{\Li ^{n+1}g_0}{\Li ^n g_0}(x)-C(p,w)\right|&=C(p,w) \left|\frac{g_{n+1}}{g_n}(x)-1\right|\\
&\leq \frac{2B_n(x)}{|g_n|(x)}\\
&\leq \frac{2B_n(x)}{\left|a_1e_1\right|(x)-C(p,w)^{-1}B_n(x)}\\
&=2\left(\frac{\left|a_1e_1\right|(x)}{B_n(x)}-\frac{1}{C(p,w)}\right)^{-1}. \qedhere
\end{align*}
\end{proof}

\section{Proofs for Section \ref{sec:examples}}
\label{sec:profex}
\begin{proof}[Proof of Example \ref{exemple uniforme}]
We want to show that \eqref{tobeproved} holds by recurrence. Setting $\Li^0 1=1$, equality \eqref{tobeproved} is true for $n=0$ as $E_0(x)=1$. Remark that 
$$
\Li\sin((2k+1)\pi x)=\frac{\sin((2k+1)\pi x)}{(2k+1)^2\pi^2}
$$
for all $k\in\N$. Using the recurrence hypothesis, \eqref{Eulerpolynomial}, and the previous equality, we can compute
\begin{align*}
\Li^{n+1} 1(x)&=\frac{(-1)^n}{(2n)!} \Li\left(E_{2n}\right)(x)\\
&= \frac{4}{\pi^{2n+1}}\sum_{i=0}^{\infty}\frac{\Li\left(\sin((2k+1)\pi x)\right)}{(2k+1)^{2n+1}}\\
&= \frac{4}{\pi^{2n+3}}\sum_{i=0}^{\infty}\frac{\sin((2k+1)\pi x)}{(2k+1)^{2n+3}}\\
&=\frac{(-1)^{n+1}}{(2n+2)!} E_{2n+2}(x).\qedhere
\end{align*}
\end{proof}

\begin{proof}[Proof of Example \ref{beta exact}]
For all $i\in\N$, we have
\begin{equation}
  \label{Lixi}
\Li x^i=\frac{x-x^{i+2}}{(i+1)(i+a+1)}.
\end{equation}
We can compute with \eqref{Lixi}
\begin{align*}
\Li e(x)&=\sum_{k=0}^{\infty}\frac{(-1)^{k}r^{2k+1}}{k!\Gamma(k+\frac{\alpha}{2}+1)}\Li x^{2k+1}\\
&=\sum_{k=0}^{\infty}\frac{(-1)^kr^{2k+1}}{k!\Gamma(k+\frac{\alpha}{2}+1)}\frac{x-x^{2k+3}}{(2k+2)(2k+\alpha+2)}\\
&=\frac{1}{4r^2}\sum_{k=0}^{\infty}\frac{(-1)^kr^{2k+3}}{(k+1)!\Gamma(k+\frac{\alpha}{2}+2)}(x-x^{2k+3})\\
&=\frac{1}{4r^2}\sum_{i=1}^{\infty}\frac{(-1)^{i-1}r^{2i+1}}{i!\Gamma(i+\frac{\alpha}{2}+1)}(x-x^{2i+1})\\
&=\frac{1}{4r^2}\left(\frac{rx}{\Gamma(\frac{\alpha}{2}+1)}+\sum_{i=1}^{\infty}\frac{(-1)^ir^{2i+1}}{i!\Gamma(i+\frac{\alpha}{2}+1)}x^{2i+1}\right)\\
&=\frac{1}{4r^2}e(x)
\end{align*}
where we have used the hypothesis $\sum_{i=0}^{\infty}\frac{(-1)^ir^{2i+1}}{i!\Gamma(i+\frac{\alpha}{2}+1)}=g(r)=0$. 
We know that $g(r_1\cdot)$ doesn't change sign between 0 and 1 since $r_1$ is the smallest positive root if $g$. Hence, $h=\int_0^{\cdot}g(rx)dx$ is monotone on $[0,1]$. 
As $h\in L^2(p)$, Theorem \ref{lem1} enables us to conclude that $C(p,1)={1}/{(4r_1^2)}$ and $h$ saturates $\PIpw$.

For information, we have in the case $\alpha=2$ that 
\begin{equation}
  \label{xRnx2}
  \Li^n x=\frac{(-1)^n}{4^n n!(n+1)!} xR_{n}(x^2)
\end{equation}
where the polynomials $R_n$ are defined recursively by $R_0(x)=1$ and
\begin{equation*}
R_n(x)=x^n-\sum_{k=0}^{n-1}\frac{C^k_n C^{k+1}_{n+1}}{n-k+1}R_k(x)
\end{equation*}
where $C^k_n$ denotes a binomial coefficient. The coefficients of $R_n$ form the $n$-th row of the inverse matrix of the Narayana triangle (see \cite{oeis:Narayana}).
To prove \eqref{xRnx2}, we proceed by recurrence. This equality is obviously true in the case $n=0$. Assume that it also holds for all $k\leq n$. First, observe that
\begin{align*}
\Li \left(xR_k(x^2)\right)&=\Li\left((-1)^k4^k k!(k+1)!\Li^kx\right)\\
&=(-1)^k4^k k!(k+1)!\Li^{k+1}x\\
&=\frac{(-1)^k4^k k!(k+1)!}{(-1)^{k+1}4^{k+1}(k+1)!(k+2)!}xR_{k+1}(x^2)\\
&=-\frac{1}{4(k+1)(k+2)}xR_{k+1}(x^2)
\end{align*}
for all $k\leq n-1$, where we have used the recurrence hypothesis twice. We can compute, using the recurrence hypothesis and the previous calculation,
\begin{align*}
\Li^{n+1} x&=\Li(\Li^nx)\\
&=\Li\left(\frac{(-1)^n}{4^n n!(n+1)!}xR_n(x^2)\right)\\
&= \frac{(-1)^n}{4^n n!(n+1)!}\left(\Li x^{2n+1}-\sum_{k=0}^{n-1}\frac{C^k_n C^{k+1}_{n+1}}{n-k+1}\Li \left(xR_k(x^2)\right)\right)\\
&=\frac{(-1)^n}{4^n n!(n+1)!}\left( \frac{x-x^{2n+3}}{(2n+2)(2n+4)}+\sum_{k=0}^{n-1}\frac{C^k_n C^{k+1}_{n+1}}{n-k+1}\frac{xR_{k+1}(x^2)}{4(k+1)(k+2)}\right)\\
&=\frac{(-1)^n}{4^{n+1} (n+1)!(n+2)!}\left(x-x^{2n+3}+\sum_{k=0}^{n-1}\frac{(n+1)!(n+2)!\,xR_{k+1}(x^2)}{(k+1)!(k+2)!\left\{(n-k)!\right\}^2(n-k+1)}\right)\\
&=\frac{(-1)^n}{4^{n+1} (n+1)!(n+2)!}\left(x-x^{2n+3}+\sum_{i=1}^{n}\frac{(n+1)!(n+2)!\,xR_{i}(x^2)}{i!(i+1)!\left\{(n+1-i)!\right\}^2(n+2-i)}\right)\\
&=\frac{(-1)^n}{4^{n+1} (n+1)!(n+2)!}\left(-x^{2n+3}+\sum_{i=0}^{n}\frac{C^{i}_{n+1}C^{i+1}_{n+2}}{(n+1-i+1)}xR_{i}(x^2)\right)\\
&=\frac{(-1)^{n+1}}{4^{n+1} (n+1)!(n+2)!}xR_{n+1}(x^2). \qedhere
\end{align*}
\end{proof}

\begin{proof}[Proof of Example \ref{gamma}]
We could prove the statements from Example \ref{gamma} with Theorem \ref{lem1} by computing $\Li e_1$. Instead, we prefer using Corollary \ref{coro LS} because the calculations are much easier. 
Let $h$ be as stated. It follows from straightforward computations that   $-\LS h={k}h /({\theta^2(k+1)^2})$. Remark that $h$ is increasing and that $h'(0)p(0)=\lim_{t\rightarrow +\infty}h'(t)p(t)=0$. When $k>1$, we have $h\in L^2(p)$. It then follows from Corollary \ref{coro LS} that $C(p,1)=\theta^2(k+1)^2k^{-1}$ and the saturating function is $h$. When $k\in ]0,1]$, we  only have $h\in L^1(p)$. So, we still get $C(p,1)\leq\theta^2(k+1)^2k^{-1}$ by Proposition \ref{our_bound} but no more can be said. 

For information, we have in the case $k=\theta=1$ that
\begin{equation}
\label{gamma11}
    \Li^n 1(x)=\sum_{k=1}^n \frac{C^k_n(2n-k)!}{n!}x^k. 
\end{equation} 
To prove this fact, we need two observations. First, we have
$$
\Li x^i=\sum_{j=1}^{i+1} \frac{i!}{j!}x^j.
$$
for all $i\in\N$. Second, it is possible to show by recurrence on $j$ that 
$$
\sum_{i=j-1}^{n}\frac{i(2n-i-1)!}{(n-i)!}=\frac{j(2n-j+1)!}{(n+1)(n-j+1)!}
$$
for all $n\in \N$ and $j=1,\ldots n+1$. With these two equalities, we can prove \eqref{gamma11} by recurrence.
\end{proof}

\begin{proof}[Proof of Example \ref{weibull}]
To see that $h$ saturates $\PIpw$, it is enough to compute $-\LS h= k^2 h/\lambda^k$ and to apply Corollary \ref{coro LS}.
\end{proof}

\end{document}